\newtheorem{proposition}{Proposition}
\newtheorem{rmk}{Remark}
\newproof{pot}{Proof of Theorem \ref{thm2}}
\newcommand\myeq{\mathrel{\stackrel{\makebox[0pt]{\mbox{\normalfont\scriptsize def}}}{=}}}
\newcounter{lnote}
\newcommand{\red}[1]{#1}
\newcommand{\crossed}[1]{}
\newcommand{\footcomm}[1]{}
\newcommand{\I}[1]{\mathcal{I}_{_{\hbox{\tiny{#1}}}}}
\newcommand{\G}[1]{\mathcal{G}_{_{\hbox{\tiny{#1}}}}}
\newcommand{\C}[1]{C_{_{\hbox{\tiny{#1}}}}}
\newcommand{\SG}[1]{$\text{SG}_{_{\hbox{\tiny{#1}}}}$}
\newcommand{\SGD}[1]{$\text{SGD}_{_{\hbox{\tiny{#1}}}}$}
\newcommand{\ASGD}[1]{$\text{ASGD}_{_{\hbox{\tiny{#1}}}}$}
\newcommand{\rASGD}[1]{$\text{rASGD}_{_{\hbox{\tiny{#1}}}}$}
\newcommand{\bs}[1]{\boldsymbol{#1}}
\begin{document}

\begin{frontmatter}

%\title{Nesterov Accelerated Stochastic Gradient for Expected Information Gain in Optimal Bayesian Design using Laplace approximation and Importance Sampling}

\title{Nesterov-Aided Stochastic Gradient Methods Using Laplace Approximation for Bayesian Design Optimization}

% Group authors per affiliation:
\author[ufsc]{André Gustavo  Carlon\corref{mycorrespondingauthor}}
\cortext[mycorrespondingauthor]{Corresponding author.\\
\quad E-mail addresses: agcarlon@gmail.com (AG Carlon),
mansourben2002@yahoo.fr (BM Dia),
espath@gmail.com (LFR Espath),
rafaelholdorf@gmail.com (RH Lopez),
raul.tempone@kaust.edu.sa (R Tempone)}
\address[ufsc]{Department of Civil Engineering, Federal University of Santa Catarina (UFSC),  Rua Jo\~ao Pio Duarte da Silva, Florian\'opolis, SC, 88040-970, Brazil}

\author[kfupm]{Ben Mansour Dia}
\address[kfupm]{College of Petroleum Engineering and Geosciences, King Fahd University of Petroleum and Minerals (KFUPM),  Dhahran 31261, Saudi Arabia}

\author[kaust]{Luis Espath}

\author[ufsc]{Rafael Holdorf Lopez}

\author[kaust]{Ra\'ul Tempone}

\address[kaust]{Computer, Electrical and Mathematical Science and Engineering Division (CEMSE), King Abdullah University of Science and Technology (KAUST), Thuwal, 23955-6900, Saudi Arabia}

\begin{abstract}
Finding the best setup for experiments is the primary concern for Optimal Experimental Design (OED).
Here, we focus on the Bayesian experimental design problem of finding the setup that maximizes the Shannon expected information gain.
We use the stochastic gradient descent and its accelerated counterpart, which employs Nesterov's method, to solve the optimization problem in OED.
We adapt a restart technique, originally proposed for the acceleration in deterministic optimization, to improve stochastic optimization methods.
We combine these optimization methods with three estimators of the objective function: the double-loop Monte Carlo estimator (DLMC), the Monte Carlo estimator using the Laplace approximation for the posterior distribution (MCLA) and the double-loop Monte Carlo estimator with Laplace-based importance sampling (DLMCIS).
Using stochastic gradient methods and Laplace-based estimators together allows us to use expensive and complex models, such as those that require solving partial differential equations (PDEs).
From a theoretical viewpoint, we derive an explicit formula to compute the gradient estimator of the Monte Carlo methods, including MCLA and DLMCIS.
From a computational standpoint, we study four examples: three based on analytical functions and one using the finite element method.
The last example is an electrical impedance tomography experiment based on the complete electrode model.
In these examples, the accelerated stochastic gradient descent method using MCLA converges to local maxima with up to five orders of magnitude fewer model evaluations than gradient descent with DLMC.
\end{abstract}

\begin{keyword}
	Optimal Experimental Design, Bayesian Inference, Laplace Approximation, Stochastic Optimization, Accelerated Gradient Descent, Importance Sampling
	\MSC[2018] 62K05, 65N21, 65C60, 65C05
\end{keyword}

\end{frontmatter}

% \linenumbers

\section{Introduction}

% Motivation
Performing experiments can be expensive and time consuming.
Moreover, the efficiency of an experiment depends on its setup.
It is thus advantageous to find, a priori, the experimental setup that maximizes the information to be collected.
Such an approach is named optimal experimental design (OED) \cite{chaloner1995bayesian}.
Due to the inherently probabilistic nature of the collected quantities, OED is an uncertainty quantification task, particularly a stochastic optimization problem.

 In the Bayesian setting, where the inference of the  parameter of interest consists in updating prior knowledge with information carried by the data, the design optimization aims to search the experimental setup that gives the best efficiency.
 To measure the efficiency of an experiment, we use the Shannon expected information gain, which is based on the Kullback--Leibler divergence of the posterior probability density function (pdf) with respect to the prior pdf of the quantities of interest \cite{chaloner1995bayesian}.

To estimate the Shannon expected information gain, we must compute a double integral over both the space of observed data and the space of the parameter of interest.
The optimization process might require several estimations of the Shannon expected information gain, which can be computationally demanding even for inexpensive experiment models.
Hence, challenges in design optimization include, among others, the approximation of the Shannon expected information gain and the estimation of the gradient.
Our main goal is to evaluate the ability of different numerical methods to efficiently perform both the optimization and the uncertainty quantification so that experiments with expensive models can be optimized with a reasonable amount of time and effort.

% Literature review
To address the OED problem, Ryan \cite{ryan2003estimating} develops an expected information gain estimator based on Monte Carlo sampling (MC) that requires the evaluation of two nested MC samplings; thus, we refer to this estimator as double-loop Monte Carlo (DLMC).
Huan \cite{huan2010accelerated} uses the DLMC estimator in the design of a combustion reaction experiment with a non-linear forward model.
To alleviate the computational burden, Huan \cite{huan2010accelerated} estimates the expected information gain over a surrogate model constructed with the Wiener chaos polynomial expansion, where the expected information gain is evaluated in a grid of design candidates, choosing the best candidate as the optimum approximation.
To improve Huan's \cite{huan2010accelerated} optimization procedure, Huan and Marzouk \cite{huan2013simulation} use a variation of the Kiefer--Wolfowitz algorithm proposed by Spall \cite{spall1988stochastic}, which reduces the number of objective function evaluations needed for the finite differences estimates of the gradient to two.
Long et al. \cite{long2013fast} use a Laplace approximation, thus avoiding the evaluation of one of the two nested MC samplings of DLMC.
The resulting expected information gain estimator is referred to as the Monte Carlo with Laplace approximation (MCLA) estimator.
Huan and Marzouk \cite{huan2014gradient} estimate the gradient of the expected information gain for OED problems using mini-batch samples of various sizes, all small compared to the main batch, and use this estimation to perform a steepest descent search.
To assess the efficiency of their method, they compare the convergence cost with a quasi-Newton approach using sample average approximation.
Beck et al. \cite{Beck2017fast} propose an importance sampling approach for DLMC that uses Laplace approximations to draw more informative samples, reducing the cost of DLMC without adding the bias of MCLA.
We refer to DLMC with importance sampling as double-loop Monte Carlo  with Laplace-based importance sampling (DLMCIS).

Since we opt to use gradient-based optimization methods, the estimation of the gradient of the expected information gain plays a crucial role in our framework.
To alleviate the computational burden of computing an accurate full gradient on every iteration, we use stochastic gradient methods, which are a class of optimization methods that use noisy estimates of the true gradient \cite{nemirovski2005efficient}.
Hence, precise gradient estimates can be substituted by inexpensive alternatives, reducing the cost per optimization iteration.
We refer to the gradient estimators used in stochastic optimization methods as stochastic gradients.
To evaluate the gradient of the expected information gain in the stochastic gradient sense, we use three estimation strategies: DLMC, MCLA \cite{long2013fast} and DLMCIS \cite{Beck2017fast}.
The stochastic gradient of the DLMC estimator only has one MC loop, and thus is referred to as the stochastic gradient Monte Carlo (\SG{MC}) estimator.
The MCLA estimator uses an approximation of the posterior distribution as a Gaussian pdf to calculate the Kullback--Leibler divergence of the posterior pdf with respect to the prior pdf, avoiding the evaluation of one of the two nested integrals that appear in DLMC.
Consequently, the number of model evaluations is significantly reduced.
The stochastic gradient of MCLA is the stochastic gradient with Laplace approximation (\SG{LA}) estimator, a gradient estimator that does not use MC sampling.
Alternatively, the DLMCIS estimator dramatically reduces the number of inner samples compared to the DLMC estimator, without introducing the bias of the Laplace approximation.
We use the same importance sampling scheme in its respective stochastic gradient estimator, resulting in the stochastic gradient Monte Carlo estimator with Laplace-based importance sampling (\SG{MCIS}).
The expected information gain estimators are discussed in Section \ref{sec:EIGestimators} and their gradients in Section \ref{sec:SGestimators}.

% The search for the optimal experimental setup is a stochastic optimization problem, which can be computationally expensive.
% Here, we will set up the search for the optimal experimental design as a stochastic optimization problem.
To solve the OED problem, we employ three optimization methods: stochastic gradient descent (SGD), SGD with Nesterov's acceleration (ASGD), and ASGD with a restart technique (rASGD).
The SGD method is an application of the stochastic approximation proposed by Robbins and Monro \cite{robbins1951stochastic} that is used in the optimization of expected values of functions.
Therefore, SGD is well suited for optimization in the presence of uncertainties.
Although SGD converges to the optimum using an inexpensive estimate of the gradient, its convergence is slow.
To improve the convergence while maintaining a low-cost gradient estimate, we use Nesterov's acceleration \cite{nesterov1983method} coupled with a restart technique proposed by O'Donoghue and Cand\`es \cite{o2015adaptive}.
Nitanda \cite{nitanda2016accelerated} employs this restart technique with a variance reduction technique and mini-batches to multiclass logistic regression problems.
The use of variance reduction, combined with mini-batches, makes the estimation of the gradient nearly deterministic, which is different to our approach.
Moreover, Nitanda \cite{nitanda2016accelerated} uses the rASGD for regression problems where the objective function is a finite sum of functions.
Here, we combine the restart technique for the acceleration, originally proposed by O'Donoghue and Cand\`es \cite{o2015adaptive} for deterministic optimization, with ASGD.
The SGD method, Nesterov's acceleration, and the restart technique are presented in Section \ref{sec:Opt}.

We assess the performance of the presented methods by solving four stochastic optimization problems, three of which are OED problems.
The first example, presented in Section \ref{sec:ex1}, is not an OED problem, but a stochastic optimization problem used to compare the optimization methods.
In the second example, shown in Section \ref{sec:ex2}, we use a quadratic forward model to test the efficiency of MCLA and DLMCIS, as well as their coupling with the optimization methods.
In the third example, shown in Section \ref{sec:ex3}, we search for the optimal positioning of a strain gauge on a beam in order to maximize the expected information gain with respect to some mechanical properties of the material.
Finally, in the fourth example (Section \ref{sec:ex4}), we optimize the currents applied to electrodes during an electrical impedance tomography (EIT) experiment in order to maximize the expected information gain regarding the orientation angles of plies in a composite laminate material.
The model for this problem is based on partial differential equations (PDEs) and is solved using the finite element method (FEM).

The main contribution of this work, from a theoretical standpoint, lies in the derivation of the estimators of the gradients of the expected information gain and in their adaptation to be used in stochastic gradient methods.
Moreover, from a numerical standpoint, we successfully tailor recent ideas of Nesterov-based optimizers with the restart technique proposed for deterministic optimization by O'Donoghue and Cand\`es \cite{o2015adaptive} to the stochastic gradient framework.
Finally, we provide numerical engineering examples to highlight the performances of our methods.

The following notation is used throughout the paper: $\text{det}(\cdot)$ is the determinant;
the tensor notation is adopted, where ($\cdot$) is the single contraction and ($\colon$) is the double contraction;
$||\cdot||$ is the $L^2$-norm; $|| \bs{a} ||_{\bs{\Sigma}} = \bs{a} \cdot \bs{\Sigma} \cdot \bs{a}$ is the $\bs{\Sigma}$-norm of $\bs{a}$; $\mathbb{E}[\cdot]$ is the expectation operator; $\mathbb{V}[\cdot]$ is the variance operator; and $\text{dim}(\cdot)$ is the dimension.

\section{Bayesian experimental design}

\subsection{Bayesian inference}

The experimental data are represented by $\bs{y}_i \in \mathbb{R}^{r}$, a vector of $r$ observations that are given by the experiment model response with an additive error, as
\begin{equation}
\bs{y}_i(\bs{\xi}) = \bs{g}(\bs{\xi},\bs{\theta}_t) + \bs{\epsilon}_i, \qquad i=1,\dots,N_e,
\label{eq:datamodel}
\end{equation}
where $\bs{g}(\bs{\xi},\bs{\theta}_t) \in \mathbb{R}^{r}$ are the deterministic model responses, $\bs{\theta}_t \in \mathbb{R}^{d}$ is the parameter vector to be recovered, $\bs{\xi} \in \Xi$ is the design parameter vector, and $N_e$ is the number of repetitive experiments.
Here, $\Xi$ is the experimental design space.
We assume that the measurement noise vectors $\bs{\epsilon}_i$
 % $\bs{\epsilon}_i\sim\mathcal{N}(\bs{0},\bs{\Sigma_{\epsilon}})$,
% $\bs{\epsilon}_i: \Omega^d \mapsto \mathcal{E} \subset \mathbb{R}^{d}$
are independent and identically distributed (i.i.d.) Gaussian-distributed with zero-mean and covariance matrix $\bs{\Sigma_{\epsilon}}$.
Moreover, the noise vectors $\bs{\epsilon}_i$ are also independent of both $\bs{\theta}$ and $\bs{\xi}$.
We characterize the unknown parameter $\bs{\theta}_t$ as a random variable vector $\bs{\theta}: \Omega^d \mapsto \Theta \subset \mathbb{R}^{d}$ with a prior distribution $\pi(\bs{\theta})$, where $\Omega$ is the set of random events.
The set of observed data is $\bs{Y}=\{\bs{y}_i\}^{N_e}_{i=1}$, the functional $\bs{g}$ is assumed to be twice differentiable with respect to $\bs{\theta}$ and differentiable with respect to $\bs{\xi}$ and the true value of $\bs{\theta}_t$ is assumed to be unknown.
% In lieu of the actual vector $\bs{\theta}_t$, we consider a vector of random variables $\bs{\theta}: \Theta$ $\mapsto$ $\mathbb{R}^{d}$ with prior distribution $\pi(\bs{\theta})$, where $\Theta$ is the prior space.
%The functional $\bs{g}$ is assumed to be twice differentiable with respect to $\bs{\theta}$ and differentiable with respect to $\bs{\xi}$.

Once the data is collected, the prior pdf is updated through a likelihood of events, thus producing the posterior pdf.
The fundamental idea of the Bayesian framework for OED consists of finding the experimental setup that produces data that, on average, maximize the knowledge about the quantities of interest, i.e., that maximize the Kullback-Leibler divergence of the posterior pdf with respect to the prior pdf.
This machinery is built on Bayes' formula, i.e.,
\begin{equation}
\pi(\bs{\theta}|\bs{Y},\bs{\xi}) = \frac{p(\bs{Y}|\bs{\theta},\bs{\xi}) \pi(\bs{\theta})}{p(\bs{Y}\vert \bs{\xi})} ,
\label{eq:bayes}
\end{equation}
where $\pi(\bs{\theta})$ is the prior pdf (the initial belief about the parameter to be inferred), $\pi(\bs{\theta}|\bs{Y}, \bs{\xi})$ is the posterior distribution (the updated pdf of the random variable $\bs{\theta}$, given the observation $\bs{Y}$), $p(\bs{Y}|\bs{\theta}, \bs{\xi})$ is the likelihood (the information provided by the observation $\bs{Y}$), and  $p(\bs{Y}|\bs{\xi})$ is the evidence (the pdf of the marginal distribution of the observation $\bs{Y}$, describing the data distribution).
Considering the data model \eqref{eq:datamodel} and the Gaussian assumption for the noise, the likelihood has the form
\begin{equation}
p(\bs{Y} \vert \bs{\theta}, \bs{\xi}) = \text{det}\left(2\pi\bs{\Sigma_\epsilon} \right)^{-\frac{N_e}{2}} \exp \left( -\frac{1}{2}  \sum_{i=1}^{N_e} \left\| \bs{y}_{i}(\bs{\xi}) - \bs{g}(\bs{\xi}, \bs{\theta})\right\|^2_{\bs{\Sigma_\epsilon}^{-1}} \right).
\end{equation}
% where the norm is $ \|\bs{x}\|^{2}_{\bs{\Sigma}_{\bs{\epsilon}}^{-1}} = \bs{x} \cdot \bs{\Sigma}_{\bs{\epsilon}}^{-1} \cdot \bs{x}$ for a vector $\bs{x}$ and covariance matrix $\bs{\Sigma_\epsilon}$.

\subsection{Expected information gain}

To evaluate the quality of each experiment, we measure the Kullback--Leibler divergence ($D_{kl})$ of the posterior pdf with respect to the prior pdf:
\begin{equation}\label{eq:dkl}
D_{kl}\left(\bs{\xi},\bs{Y}\right)= \int_{\Theta} \log{\left(\frac {\pi(\bs{\theta}|\bs{Y}, \bs{\xi})}{\pi(\bs{\theta})}\right)} \pi(\bs{\theta}|\bs{Y}, \bs{\xi})\text{d}\bs{\theta}.
\end{equation}
The expected information gain, proposed by Shannon \cite{shannon1948mathematical}, is the expectation of the $D_{kl}$ \eqref{eq:dkl} with respect to the distribution of the data $p(\bs{Y}|\bs{\xi})$.
By accounting for \eqref{eq:bayes}, we obtain the expected information gain as
\begin{align}\label{eq:infgain}
I(\bs{\xi}) =& \int_{\mathcal{Y}}{\int_{\Theta}{\log \left( \frac{\pi(\bs{\theta} \vert \bs{Y}, \bs{\xi})}{\pi(\bs{\theta})} \right) \pi(\bs{\theta} \vert \bs{Y}, \bs{\xi}) \text{d}\bs{\theta}} p(\bs{Y} \vert \bs{\xi}) \text{d}\bs{Y}} \nonumber\\
=& \int_{\Theta} \int_{\mathcal{Y}} \log \left( \frac{p(\bs{Y} \vert \bs{\theta}, \bs{\xi})}{p(\bs{Y} \vert \bs{\xi})} \right)  p(\bs{Y} \vert \bs{\theta}, \bs{\xi}) \text{d}\bs{Y} \pi(\bs{\theta}) \text{d}\bs{\theta}.
\end{align}
Since the evidence $p(\bs{Y}|\bs{\xi})$ is not known, we substitute it by marginalization of the likelihood with respect to the prior $\pi(\bs{\theta}^*)$, i.e.,
\begin{equation}
p(\bs{Y}|\bs{\xi})= \int_{\Theta} p(\bs{Y}|\bs{\theta}^*,\bs{\xi}) \pi(\bs{\theta}^*) \text{d} \bs{\theta^*}.
\end{equation}
Bear in mind that $\bs{\theta}^*$ and $\bs{\theta}$ are independent and that $\bs{Y}$ depends on $\bs{\theta}$, $\bs{\xi}$, and $\bs{\epsilon}$, i.e., the parameter $\bs{\theta}$ used to generate $\bs{Y}$ is different from $\bs{\theta}^*$ in the integral within the logarithm.
Thus, we rewrite the expected information gain as
\begin{equation}
  I(\bs{\xi}) = \int_{\Theta} \int_{\mathcal{Y}} \log \left( \frac{p(\bs{Y} \vert \bs{\theta}, \bs{\xi})}{\int_{\Theta} p(\bs{Y}|\bs{\theta}^*,\bs{\xi}) \pi(\bs{\theta}^*) \text{d} \bs{\theta^*}} \right)  p(\bs{Y} \vert \bs{\theta}, \bs{\xi}) \text{d}\bs{Y} \pi(\bs{\theta}) \text{d}\bs{\theta},
\end{equation}
where the likelihood pdf is
\begin{equation}\label{eq:likelihood}
p(\bs{Y}(\bs{\xi},\bs{\theta},\bs{\epsilon}) \vert \bs{\theta}^*, \bs{\xi}) = \text{det}( 2 \pi \bs{\Sigma}_{\bs{\epsilon}}) ^{-\frac{N_e}{2}} {\exp \left(-\frac{1}{2} \sum_{i=1}^{N_e} \| \bs{r}_i(\bs{\xi},\bs{\theta},\bs{\theta}^*,\bs{\epsilon}) \|^2 _{\bs{\Sigma_\epsilon}^{-1}} \right)},
% \prod_{i=1}^{N_e}{\exp \left(-\frac{1}{2} \bs{r}_i^T(\bs{\xi},\bs{\theta},\bs{\theta}^*,\bs{\epsilon}) \bs{\Sigma_\epsilon}^{-1} \bs{r}_i(\bs{\xi},\bs{\theta},\bs{\theta}^*,\bs{\epsilon}) \right)},
\end{equation}
and $\bs{r}_i(\bs{\xi},\bs{\theta},\bs{\theta}^*, \bs{\epsilon}) = \bs{g}(\bs{\xi},\bs{\theta}) + \bs{\epsilon} - \bs{g}(\bs{\xi},\bs{\theta}^*)$ is the residual of the $i$-th experimental data.

\begin{rmk}[Expected information gain with Laplace approximation] \label{rmk1:eigla}
The Laplace estimator for $D_{kl}$ is proposed by Long et al. \cite{long2013fast} and relies on approximating the logarithm of the posterior pdf by a second-order Taylor expansion at the maximum posterior estimate.
As a consequence, the approximated posterior is Gaussian-distributed.
The Gaussian approximation of the posterior pdf can be written as
\begin{equation}\label{eq:gaussian.posterior}
\pi(\bs{\theta} \vert \bs{Y}, \bs{\xi}) \approx \pi_{_{\hbox{\tiny{LA}}}}(\bs{\theta} \vert \bs{Y}, \bs{\xi})
\myeq \text{\emph{det}}(2 \pi \bs{\Sigma}(\bs{\xi}, \bs{\hat{\theta}})) ^{-\frac{1}{2}} \exp\left(-\frac{1}{2} \|\bs{\theta} - \bs{\hat{\theta}}(\bs{\xi})\|^{2}_{\bs{{\Sigma}}^{-1}(\bs{\xi}, \bs{\hat{\theta}})}\right),
\end{equation}
where $\bs{\hat{\theta}}$ is the maximum a posteriori (MAP) estimate, i.e.,
\begin{equation}\label{eq:thetahat}
\bs{\hat{\theta}}(\bs{\xi}) \myeq \underset{\bs{\theta} \in \Theta}{\arg\min} \left[ \frac{1}{2} \sum_{i=1}^{N_e} \left\| \bs{y}_i - \bs{g}(\bs{\xi}, \bs{\theta}) \right\|^2_{\bs{\Sigma_{\epsilon}}^{-1}} - \log (\pi(\bs{\theta})) \right], \qquad \text{and}
\end{equation}
\begin{equation}\label{eq:sigmahat}
\bs{\Sigma}^{-1}(\bs{\xi}, \bs{\hat{\theta}}) = N_e \nabla_{\bs{\theta}} \bs{g} (\bs{\xi}, \bs{\hat{\theta}}) \cdot \bs{\Sigma_\epsilon}^{-1} \cdot \nabla_{\bs{\theta}} \bs{g} (\bs{\xi}, \bs{\hat{\theta}}) -  \nabla_{\bs{\theta}}  \nabla_{\bs{\theta}}  \log (\pi(\bs{\hat{\theta}})) +  \mathcal{O}_\mathbb{P}\left(\sqrt{N_e}\right)
\end{equation}
is the Hessian matrix of the negative logarithm of the posterior pdf evaluated at $\bs{\hat{\theta}}$.
Moreover, Long et al. \cite{long2013fast} show that
\begin{equation}
  \bs{\hat{\theta}} = \bs{\theta}_t + \mathcal{O}_{\mathbb{P}} \left( \frac{1}{\sqrt{N_e}} \right).
\end{equation}

Finally, the Gaussian approximation \eqref{eq:gaussian.posterior} with $\hat{\bs{\theta}}$ and $\bs{\Sigma}$ given by \eqref{eq:thetahat} and \eqref{eq:sigmahat}, respectively, leads to an analytical expression of the $D_{kl}$.
Using the approximation $\bs{\hat{\theta}} \approx \bs{\theta}_t$ subsequently yields the approximate expected information gain as
\begin{equation}\label{eq:I.Lap}
I(\bs{\xi}) =
\int_{\Theta}{\left[-\frac{1}{2} \log(\text{\emph{det}}(2 \pi \bs{\Sigma} (\bs{\xi}, \bs{\theta}_t))) - \frac{d}{2} - \log (\pi(\bs{\theta}_t)) \right] \pi(\bs{\theta}_t) \text{d}\bs{\theta}_t} + \mathcal{O} \left(\frac{1}{N_e}\right).
\end{equation}
\qed
\end{rmk}

\subsection{Maximization of the expected information gain}

We want to find the optimal setup $\bs{\xi}^*$ in a Bayesian framework that, on average, provides the most informative data.
We formulate the problem of finding $\bs{\xi}^*$ as the optimization problem
\begin{equation}
 \bs{\xi}^* = \underset{\bs{\xi} ~ \in ~  \Xi}{\text{arg max}}(I(\bs{\xi})).
\label{eq:infopt}
\end{equation}
With the assumption that the local search methods converge to $\bs{\xi}^*$, gradient-based methods are suited to solve the optimization problem given by \eqref{eq:infopt}.

We write the gradient of $I$ in \eqref{eq:infgain} with respect to the design variable $\bs{\xi}$ as
\begin{equation}
 \nabla_{\bs{\xi}} I(\bs{\xi}) = \nabla_{\bs{\xi}} \int_{\Theta} \int_{\mathcal{Y}} \log \left( \frac{p(\bs{Y} \vert \bs{\theta}, \bs{\xi})}{p(\bs{Y}\vert \bs{\xi})} \right)  p(\bs{Y} \vert \bs{\theta}, \bs{\xi}) \text{d}\bs{Y} \pi(\bs{\theta}) \text{d}\bs{\theta}.
 \label{eq:fullgradI}
\end{equation}
We also denote the quantity defined in \eqref{eq:fullgradI} as the \textit{full gradient} of the expected information gain.

\begin{proposition}\label{propo:1}
Assuming that $\bs{Y}=\{\bs{y}_i(\bs{\xi}, \bs{\epsilon}_i)\}^{N_e}_{i=1}$ has the particular form \eqref{eq:datamodel}, \eqref{eq:fullgradI} becomes
\begin{equation}\label{eq:grad.exp.inf.gain}
\nabla_{\bs{\xi}} I(\bs{\xi}) =\int_{\Theta} \int_{\mathcal{Y}}  \nabla_{\bs{\xi}}  \log \left( \frac{p(\bs{Y} \vert \bs{\theta}, \bs{\xi})}{p(\bs{Y}\vert \bs{\xi})} \right)  p(\bs{Y} \vert \bs{\theta}, \bs{\xi}) \text{d}\bs{Y} \pi(\bs{\theta})  \text{d} \bs{\theta}.
\end{equation}
\end{proposition}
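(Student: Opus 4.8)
The plan is to exploit the additive structure of the data model \eqref{eq:datamodel} to reparametrize the inner integral so that the measure against which we integrate becomes independent of $\bs{\xi}$; once this is done, moving $\nabla_{\bs{\xi}}$ inside the integral is a routine application of the Leibniz rule, and the form \eqref{eq:grad.exp.inf.gain} follows by reverting the change of variables.

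First I would change variables in the inner integral from $\bs{Y}$ to the noise $\bs{\epsilon}=(\bs{\epsilon}_1,\dots,\bs{\epsilon}_{N_e})$ through $\bs{y}_i=\bs{g}(\bs{\xi},\bs{\theta})+\bs{\epsilon}_i$. The map is a pure translation in $\bs{Y}$, so its Jacobian is the identity and $\text{d}\bs{Y}=\text{d}\bs{\epsilon}$. Moreover, setting $\bs{\theta}^\ast=\bs{\theta}$ in \eqref{eq:likelihood} gives residuals $\bs{r}_i=\bs{\epsilon}_i$, hence the outer likelihood evaluated along the data model collapses to the noise density, $p(\bs{Y}\vert\bs{\theta},\bs{\xi})=\pi_{\bs{\epsilon}}(\bs{\epsilon})\myeq\text{det}(2\pi\bs{\Sigma_\epsilon})^{-N_e/2}\exp(-\tfrac12\sum_i\|\bs{\epsilon}_i\|^2_{\bs{\Sigma_\epsilon}^{-1}})$, which does not depend on $\bs{\xi}$. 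Writing $\ell(\bs{\xi},\bs{\theta},\bs{\epsilon})\myeq\log\big(p(\bs{Y}\vert\bs{\theta},\bs{\xi})/p(\bs{Y}\vert\bs{\xi})\big)$ with $\bs{Y}=\bs{Y}(\bs{\xi},\bs{\theta},\bs{\epsilon})$, the expected information gain becomes an integral of $\ell$ against the fixed product measure $\pi_{\bs{\epsilon}}(\bs{\epsilon})\,\text{d}\bs{\epsilon}\,\pi(\bs{\theta})\,\text{d}\bs{\theta}$, in which \emph{all} the dependence on $\bs{\xi}$ has been transferred into the integrand.

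Next I would differentiate under the integral sign. Since the measure no longer carries $\bs{\xi}$, the Leibniz rule yields $\nabla_{\bs{\xi}}I=\int_\Theta\int\nabla_{\bs{\xi}}\ell(\bs{\xi},\bs{\theta},\bs{\epsilon})\,\pi_{\bs{\epsilon}}(\bs{\epsilon})\,\text{d}\bs{\epsilon}\,\pi(\bs{\theta})\,\text{d}\bs{\theta}$, where $\nabla_{\bs{\xi}}$ is the total derivative accounting both for the explicit $\bs{\xi}$ in $\ell$ and for the implicit dependence through $\bs{Y}=\bs{g}(\bs{\xi},\bs{\theta})+\bs{\epsilon}$. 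Reverting the change of variables $\bs{\epsilon}\mapsto\bs{Y}$ restores the measure $p(\bs{Y}\vert\bs{\theta},\bs{\xi})\,\text{d}\bs{Y}$ and reproduces exactly \eqref{eq:grad.exp.inf.gain}, with $\nabla_{\bs{\xi}}$ inside the logarithm interpreted as this total derivative.

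The main obstacle is the rigorous justification of the interchange in the previous paragraph: to invoke the Leibniz rule (equivalently, dominated convergence on the difference quotients) I must exhibit an integrable majorant for $\nabla_{\bs{\xi}}\ell$ that holds uniformly for $\bs{\xi}$ in a neighborhood of the point of differentiation. This I would obtain from the standing smoothness hypotheses on $\bs{g}$---differentiability in $\bs{\xi}$ and twice differentiability in $\bs{\theta}$---together with the Gaussian tails of $\pi_{\bs{\epsilon}}$, which control the polynomial growth in $\bs{\epsilon}$ produced by the residuals. It is worth remarking that a direct product-rule differentiation of the original $\bs{Y}$-integral in \eqref{eq:fullgradI} would seem to generate an extra score term $\ell\,\nabla_{\bs{\xi}}p(\bs{Y}\vert\bs{\theta},\bs{\xi})$; an integration by parts in $\bs{Y}$ (the boundary terms vanishing by the decay of $p$) shows this term equals the chain-rule contribution $\nabla_{\bs{Y}}\ell\cdot\nabla_{\bs{\xi}}\bs{g}$ already contained in the total derivative, confirming that no spurious term survives and that the reparametrization is the cleanest route.
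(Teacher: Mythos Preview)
Your proposal is correct and follows essentially the same route as the paper: reparametrize the inner integral from $\bs{Y}$ to $\bs{\epsilon}$, observe that under the additive model the Jacobian is the identity and the likelihood reduces to the $\bs{\xi}$-free noise density, apply Leibniz, and change back. The only cosmetic difference is that the paper first writes the product-rule expansion into three terms and then shows two of them vanish (because $\nabla_{\bs{\xi}}p(\bs{Y}(\bs{\xi},\bs{\theta},\bs{\epsilon})\vert\bs{\theta},\bs{\xi})=\bs{0}$ and $\nabla_{\bs{\xi}}\det(\nabla_{\bs{\epsilon}}\bs{Y})=\bs{0}$), whereas you absorb these observations into the setup before differentiating; your added discussion of the dominated-convergence justification goes slightly beyond what the paper makes explicit.
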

\begin{proof}
We assume that $\bs{Y}$ depends on $\bs{\xi}$; thus, we need to apply a change of variables before applying Leibniz's rule,
\begin{equation}
  \text{d}\bs{Y} = \text{det} (\nabla_{\bs{\epsilon}} \bs{Y}(\bs{\xi},\bs{\theta},\bs{\epsilon})) \text{d}\bs{\epsilon}.
\end{equation}
With $\mathcal{E}$ being the sample space of $\bs{\epsilon}$, from \eqref{eq:fullgradI},
\begin{equation} \label{eq:A2}
  \begin{split}
  \nabla_{\bs{\xi}} I(\bs{\xi}) &= \nabla_{\bs{\xi}} \int_{\Theta} \int_{\mathcal{E}}  \log \left( \frac{p(\bs{Y} \vert \bs{\theta}, \bs{\xi})}{p(\bs{Y}\vert \bs{\xi})} \right)  p(\bs{Y} \vert \bs{\theta}, \bs{\xi}) \det (\nabla_{\bs{\epsilon}}\bs{Y}) \text{d}\bs{\epsilon} \pi(\bs{\theta})  \text{d} \bs{\theta}\\
  &= \int_{\Theta} \int_{\mathcal{E}} \nabla_{\bs{\xi}} \log \left( \frac{p(\bs{Y} \vert \bs{\theta}, \bs{\xi})}{p(\bs{Y}\vert \bs{\xi})} \right)  p(\bs{Y} \vert \bs{\theta}, \bs{\xi}) \det (\nabla_{\bs{\epsilon}}\bs{Y}) \text{d}\bs{\epsilon} \pi(\bs{\theta})  \text{d} \bs{\theta}\\
  & \quad + \int_{\Theta} \int_{\mathcal{E}} \log \left( \frac{p(\bs{Y} \vert \bs{\theta}, \bs{\xi})}{p(\bs{Y}\vert \bs{\xi})} \right)  \nabla_{\bs{\xi}}  p(\bs{Y} \vert \bs{\theta}, \bs{\xi}) \det (\nabla_{\bs{\epsilon}}\bs{Y}) \text{d}\bs{\epsilon} \pi(\bs{\theta})  \text{d} \bs{\theta}\\
  & \quad + \int_{\Theta} \int_{\mathcal{E}} \log \left( \frac{p(\bs{Y} \vert \bs{\theta}, \bs{\xi})}{p(\bs{Y}\vert \bs{\xi})} \right)  p(\bs{Y} \vert \bs{\theta}, \bs{\xi}) \nabla_{\bs{\xi}} \det (\nabla_{\bs{\epsilon}}\bs{Y}) \text{d}\bs{\epsilon} \pi(\bs{\theta})  \text{d} \bs{\theta}.
\end{split}
\end{equation}

Next, we prove that, for experiments with data modeled as \eqref{eq:datamodel}, the two last integrals on the r.h.s. of \eqref{eq:A2} vanish.
For the particular data $\bs{Y}(\bs{\xi}, \bs{\theta}, \bs{\epsilon})$, the likelihood $p(\bs{Y}(\bs{\xi}, \bs{\theta}, \bs{\epsilon}) \vert \bs{\theta}, \bs{\xi})$ can be obtained from \eqref{eq:likelihood} as
\begin{equation} \label{eq:A3}
p(\bs{Y}(\bs{\xi},\bs{\theta},\bs{\epsilon}) \vert \bs{\theta}, \bs{\xi}) = \text{det}(2 \pi \bs{\Sigma_\epsilon}^{-1}) ^{-\frac{N_e}{2}}\exp \left(-\frac{1}{2} \sum_{i=1}^{N_e} \| \bs{\epsilon}_i \|^2 _{\bs{\Sigma_\epsilon}^{-1}} \right);
\end{equation}
thus, it neither depends on the model nor on the design parameters $\bs{\xi}$.
Consequently,
\begin{equation}
  \begin{split}
    \nabla_{\bs{\xi}} p(\bs{Y}(\bs{\xi}, \bs{\theta}, \bs{\epsilon}) \vert \bs{\theta}, \bs{\xi}) = \bs{0};
  \end{split}
\end{equation}
hence,
\begin{equation} \label{eq:A6}
\int_{\Theta} \int_{\mathcal{E}} \log \left( \frac{p(\bs{Y} \vert \bs{\theta}, \bs{\xi})}{p(\bs{Y}\vert \bs{\xi})} \right)  \nabla_{\bs{\xi}}  p(\bs{Y} \vert \bs{\theta}, \bs{\xi}) \det (\nabla_{\bs{\epsilon}}\bs{Y}) \text{d}\bs{\epsilon} \pi(\bs{\theta})  \text{d} \bs{\theta} = \bs{0}.
\end{equation}

Regarding the last integral on the r.h.s. of \eqref{eq:A2}, from the experiment model we adopt on \eqref{eq:datamodel}, we get $\nabla_{\bs{\epsilon}} \bs{Y} = \bs{I}$; thus
\begin{equation} \label{eq:A7}
  \begin{split}
    \nabla_{\bs{\xi}} \text{det} (\nabla_{\bs{\epsilon}} \bs{Y}(\bs{\xi},\bs{\theta},\bs{\epsilon})) \text{d}\bs{\epsilon} &= \nabla_{\bs{\xi}} \text{det} (\bs{I}) \text{d}\bs{\epsilon} \\
    &= \bs{0}.
  \end{split}
\end{equation}
Consequently,
\begin{equation} \label{eq:A9}
    \int_{\Theta} \int_{\mathcal{E}} \log \left( \frac{p(\bs{Y} \vert \bs{\theta}, \bs{\xi})}{p(\bs{Y}\vert \bs{\xi})} \right)  p(\bs{Y} \vert \bs{\theta}, \bs{\xi}) \nabla_{\bs{\xi}} \text{det} (\nabla_{\bs{\epsilon}} \bs{Y}) \text{d}\bs{\epsilon} \pi(\bs  {\theta})  \text{\text{d}} \bs{\theta} = \bs{0}.
\end{equation}

Combining \eqref{eq:A2}, \eqref{eq:A6}, and \eqref{eq:A9} results in
\begin{equation} \label{eq:A10}
  \begin{split}
  \nabla_{\bs{\xi}} I(\bs{\xi}) &= \int_{\Theta} \int_{\mathcal{E}} \nabla_{\bs{\xi}} \log \left( \frac{p(\bs{Y} \vert \bs{\theta}, \bs{\xi})}{p(\bs{Y}\vert \bs{\xi})} \right)  p(\bs{Y} \vert \bs{\theta}, \bs{\xi}) \det (\nabla_{\bs{\epsilon}}\bs{Y}) \text{d}\bs{\epsilon} \pi(\bs{\theta})  \text{d} \bs{\theta}\\
  &= \int_{\Theta} \int_{\mathcal{Y}} \nabla_{\bs{\xi}} \log \left( \frac{p(\bs{Y} \vert \bs{\theta}, \bs{\xi})}{p(\bs{Y}\vert \bs{\xi})} \right)  p(\bs{Y} \vert \bs{\theta}, \bs{\xi}) \text{d}\bs{Y} \pi(\bs{\theta})  \text{d} \bs{\theta},
\end{split}
\end{equation}
for the experiment model in \eqref{eq:datamodel}.
\end{proof}

\section{Expected information gain estimators} \label{sec:EIGestimators}

In this section, we present the three estimators used throughout the paper: DLMC, MCLA and DLMCIS, denoted by $\I{DLMC}$, $\I{MCLA}$, and $\I{DLMCIS}$, respectively.
If the computation of $\bs{g}$ requires a numerical approximation of differential equations, we denote by $h^{-\varrho}$ the proportional factor of the average work to evaluate the model outcome $\bs{g}_h$, using a mesh size $h$, with $\varrho >0$.
Moreover, we assume that the numerical error of the PDE solver is proportional to $h^{\eta}$, with $\eta >0$.

\subsection{Double-loop Monte Carlo estimator}

To estimate \eqref{eq:infgain}, we approximate the double integral over both $\Theta$ and $\mathcal{Y}$ using Monte Carlo integration (the outer loop) and the marginalization of the evidence by another Monte Carlo integration (the inner loop).
Thus, the DLMC estimator is defined as
\begin{equation}
\I{DLMC}(\bs{\xi}) \myeq \frac{1}{N}\sum_{n=1}^{N}\left( \log \left( \frac{p(\bs{Y}_n | \bs{\theta}_n, \bs{\xi})}{\frac{1}{M} \sum_{m=1}^{M} p(\bs{Y}_n | \bs{\theta}^*_{n,m}, \bs{\xi})}\right) \right),
\label{eq:DLMC}
\end{equation}
where $N$ and $M$ are the number of samples for the outer and inner loops, respectively.
% An explicit analysis of the average computational work and the optimal sample sizes, according to the error control of the DLMC estimator for $\mathcal{I}_{DL}$, was carried out by Beck et al. \cite{Beck2017fast}.
Note that $(\bs{Y}_n, \bs{\theta}_n)$ are sampled jointly from the likelihood, whereas $\bs{\theta}^*_{n,m}$ is sampled independently from $\bs{\theta}_n$ \red{in each iteration of the} inner loop.
An explicit analysis of the average computational work and the optimal sample sizes ($N$ and $M$) required to achieve a particular error for $\I{DLMC}$ is carried out by Beck et al. \cite{Beck2017fast}.
They show that the total work required to compute the expected information gain using the DLMC estimator is of the order $MNh^{-\varrho}$.
Finally, the DLMC estimator is consistent but has a bias and variance respectively given by
\begin{align} \label{eq:DLMCbias}
  |I - \mathbb{E}[\I{DLMC}]| &\le \C{DL,1}h^{\eta} + \frac{\C{DL,2}}{M} + o(h^{\eta}) + \mathcal{O}\left( \frac{1}{M^2}\right),\\ \label{eq:DLMCvar}
  \mathbb{V}[\I{DLMC}] &= \frac{\C{DL,3}}{N} + \frac{\C{DL,4}}{NM} + \mathcal{O}\left( \frac{1}{NM^2}\right),
\end{align}
for the constants $\C{DL,1}$, $\C{DL,2}$, $\C{DL,3}$, and $\C{DL,4}$ (cf. \cite{Beck2017fast}).

\subsection{Monte Carlo with Laplace approximation estimator}
% As proposed by Long et al. \cite{long2013fast}, we use the Laplace approximation to approximate the Kullback--Leibler divergence.
The Laplace estimator for $D_{kl}$ reduces the approximation of the expected information gain to a single integral over the parameter space $\Theta$.
Thus, the MC estimator of \eqref{eq:I.Lap}, i.e., the MCLA estimator, is defined as
\begin{equation}\label{eq:MCLA}
 \I{MCLA}(\bs{\xi}) \myeq \frac{1}{N} \sum_{n=1}^N\left[-\frac{1}{2} \log(\text{det}(2 \pi \bs{\Sigma}(\bs{\xi}, \bs{\theta}_n))) -\frac{d}{2} - \log (\pi(\bs{\theta}_n)) \right],
\end{equation}
where $N$ is the number of MC samples and $d$ is the dimensionality of $\bs{\theta}$.
Using forward finite differences to estimate the Jacobian of $\bs{g}$ with respect to $\bs{\theta}$, the cost of evaluating the MCLA estimator is $N(d+1) h^{-\varrho}$.

According to Beck et al. \cite{Beck2017fast}, the bias and variance of the MCLA estimator are, respectively,
\begin{align}
  |I - \mathbb{E}[\I{MCLA}]| &\le \C{LA,1}h^{\eta} + \frac{\C{LA,2}}{N_e} + o(h^{\eta}),\\
  \mathbb{V}[\I{MCLA}] &= \frac{\C{LA,3}}{N},
\end{align}
where $\C{LA,1}$, $\C{LA,2}$, and $\C{LA,3}$ are constants to be estimated.
For a fixed number of experiments $N_e$, the bias of the MCLA estimator does not vanish as the number of samples goes to infinity; thus, the MCLA estimator is inconsistent.
However, the more concentrated the mass of probability of the true posterior is around the maximum a posteriori value, the better the Laplace approximation is.
Therefore, as the optimization is performed and the posterior becomes more concentrated at the true values of the parameters, the Laplace approximation bias decreases, i.e., we expect constant $\C{LA,2}$ to decrease as the optimization goes on.

\subsection{Double-loop Monte Carlo with Laplace-based importance sampling estimator}
The evaluation of $\I{DLMC}$ in \eqref{eq:DLMC} may be unsuccessful due to \emph{numerical underflow} if the prior is not concentrated enough around the posterior or if the standard deviation of the measurement errors and the number of repetitive experiments $N_e$ are large.
The MCLA estimator does not have this issue, but, as mentioned before, it includes a possible bias due to the Laplace approximation.
An alternative estimator that possesses the robustness of DLMC and the speed of MCLA is proposed in \cite{Beck2017fast}, where the Laplace approximation of the posterior distribution, $\pi_{_{\hbox{\tiny{LA}}}}(\bs{\theta} | \bs{Y},\bs{\xi})$, is used as an importance sampling distribution to estimate the evidence.
We write the DLMCIS estimator as
\begin{equation}
\I{DLMCIS}(\bs{\xi}) \myeq \frac{1}{N} \sum_{n=1}^{N}\left( \log \left( \frac{p(\bs{Y}_n | \bs{\theta}_n, \bs{\xi})}{\frac{1}{M} \sum_{m=1}^{M} \ell (\bs{Y}_n | \bs{\theta}^*_{n,m}, \bs{\xi})}\right) \right),  \quad \hbox{with}\quad \ell(\bs{Y}; \cdot, \bs{\xi}) = \frac{p(\bs{Y}| \cdot, \bs{\xi}) \pi(\cdot) }{\pi_{_{\hbox{\tiny{LA}}}}(\cdot|\bs{Y}, \bs{\xi})},
\label{eq:DLMCIS}
\end{equation}
where $\pi_{_{\hbox{\tiny{LA}}}}$ is given in Remark \ref{rmk1:eigla}.
As in DLMC, the inner-loop samples $\bs{\theta}^*_{n,m}$ are independent from the outer-loop samples, $\bs{\theta}_n$.
The change of measure in the importance sampling requires approximating the MAP value and the covariance matrix at the MAP value.
As can be observed in \eqref{eq:thetahat}, estimating the MAP value is an optimization problem in itself.
Here, we employ the Nelder-Mead algorithm \cite{nelder1965simplex} to find a MAP estimate due to its robustness and global convergence quality.
As for $\bs{\Sigma}$, it can be calculated from \eqref{eq:sigmahat} using the Jacobian of the forward model with respect to $\bs{\theta}$.

Beck et al. \cite{Beck2017fast} show that the error decomposition for the DLMCIS estimator is the same as for DLMC, \eqref{eq:DLMCbias} and \eqref{eq:DLMCvar}, but with much smaller constants on the error decomposition.
This results in fewer forward model evaluations in the inner loop being required to achieve a given tolerance.
If forward differences are used to approximate the Jacobian of the model with respect to $\bs{\theta}$, needed to approximate $\bs{\Sigma}$, each evaluation of the DLMCIS estimator has cost $N(d + 1 + M + \C{MAP}) h^{-\varrho}$, where $\C{MAP}$ is the number of model evaluations required to find $\bs{\hat\theta}$.
In comparison to DLMC, the DLMCIS estimator has an extra cost per outer loop iteration of $\C{MAP} + d + 1$, but, since $M$ is reduced, there is often a significant overall reduction in computational effort.

\section{Gradient estimators for stochastic optimizers}\label{sec:SGestimators}
Let $f(\bs{\xi}, \bs{\theta}, \bs{Y})$ be the entropic discrepancy function between the data evidence and the likelihood.
From (\ref{eq:infgain}), $f$ is given by
\begin{equation}
 f(\bs{\xi},\bs{\theta}, \bs{Y}) = \log \left( \frac{p(\bs{Y} \vert \bs{\theta}, \bs{\xi})}{p(\bs{Y}\vert \bs{\xi})} \right).
\end{equation}
Consequently, we have $ \nabla_{\bs{\xi}} I(\bs{\xi}) = \nabla_{\bs{\xi}} \mathbb{E}_{\bs{\theta}, \bs{Y}}[f(\bs{\xi}, \bs{\theta}, \bs{Y})]$.
Moreover, from Proposition \ref{propo:1}, we conclude that, for the experimental design problem with data model \eqref{eq:datamodel},
\begin{equation}
  \nabla_{\bs{\xi}} \mathbb{E}_{\bs{\theta}, \bs{Y}}[f(\bs{\xi}, \bs{\theta}, \bs{Y})] = \mathbb{E}_{ \bs{\theta}, \bs{Y}}[\nabla_{\bs{\xi}} f(\bs{\xi}, \bs{\theta}, \bs{Y})].
\end{equation}
We name the unbiased stochastic gradient estimators of the expected information gain $\mathcal{G} = \nabla_{\bs{\xi}} f$, i.e., $\mathbb{E}_{\bs{\theta}, \bs{Y}}[\mathcal{G}] = \mathbb{E}_{\bs{\theta}, \bs{Y}}[\nabla_{\bs{\xi}} f]$.
% Regarding biased estimators of $f$, the stochastic gradient method can be used as long as an unbiased gradient of the estimator is available.

Supposing that $\widehat{f}$ is a possibly biased estimator of $f$, the stochastic gradient $\widehat{\mathcal{G}} \myeq \nabla_{\bs{\xi}} \widehat{f}$ is an unbiased estimator of the gradient of $\red{\nabla_{\bs{\xi}} \mathbb{E}_{\bs{\theta}, \bs{Y}}}[ \widehat{f}]$ by construction.
\red{
However, the stochastic gradient estimators are not necessarily unbiased with respect to the true gradient $\nabla_{\bs{\xi}} \mathbb{E}_{\bs{\theta}, \bs{Y}}[f]$.}

Next, we derive three stochastic gradient estimators associated with the expected information gain estimators presented in Section \ref{sec:EIGestimators}.

\subsection{Stochastic gradient of the double loop Monte Carlo estimator}
We denote by $\G{MC}$ the gradient of the entropic function $f$ using a Monte Carlo sample of size $M$ to approximate the evidence $p(\bs{Y}|\bs{\xi})$.
This approach is similar to taking the gradient of $\I{DLMC}$ in \eqref{eq:DLMC}, except that, due to the nature of stochastic gradient methods, the variance of the estimator is allowed to be large, i.e., $N$ is set to one.
% The $\G{MC}$ estimator is
% The stochastic gradient of the DLMC estimator \eqref{eq:DLMC}, the \SG{MC} estimator, is given by
Therefore, the \SG{MC} estimator is given by
\begin{equation} \label{eq:SGDLMC}
\G{MC}(\bs{\xi}, \bs{\theta}, \bs{Y}) \myeq  \nabla_{\bs{\xi}} \left(\log \left( \frac{p(\bs{Y} | \bs{\theta}, \bs{\xi})}{\frac{1}{M} \sum_{m=1}^{M} p(\bs{Y} | \bs{\theta}^*_m, \bs{\xi})} \right) \right).
\end{equation}

Note that $\G{MC}$ is an asymptotically unbiased estimator of $\mathbb{E}_{\bs{\theta},\bs{Y}}[\nabla_{\bs{\xi}}f(\bs{\xi},\bs{\theta},\bs{Y})]$, i.e.,
\begin{equation}
  \G{MC} = \widehat{\mathbb{E}_{\bs{\theta},\bs{Y}}}[\nabla_{\bs{\xi}}f(\bs{\xi},\bs{\theta},\bs{Y})] \quad \text{and} \quad \underset{M \xrightarrow{} \infty}{\lim} \red{\mathbb{E}_{\bs{\theta},\bs{Y}}} [\G{MC}] =
  \mathbb{E}_{\bs{\theta},\bs{Y}}[\nabla_{\bs{\xi}}f(\bs{\xi},\bs{\theta},\bs{Y})],
\end{equation}
as the Monte Carlo sampling for the marginal likelihood generates a bias of order $M^{-1}$.

The estimation of (\ref{eq:SGDLMC}) by forward finite differences requires $\text{dim}(\bs{\xi}) + 1$ model evaluations per inner sample.
Thus, the total number of model evaluations is $(\text{dim}(\bs{\xi}) + 1)M$ per iteration in the optimization.
In contrast, the gradient of the DLMC estimator presented in \eqref{eq:DLMC}, using forward finite differences, costs $(\text{dim}(\bs{\xi}) + 1)NM$, i.e., $N$ times more per evaluation than the \SG{MC} estimator.
Finally, the estimator \eqref{eq:SGDLMC} is biased, with bias of order $M^{-1}$, but consistent.

\subsection{Stochastic gradient of the Monte Carlo with Laplace approximation estimator}
The stochastic gradient estimator with respect to $\bs{\xi}$ based on the Laplace approximation \eqref{eq:MCLA}, the \SG{LA} estimator, is denoted by $\G{LA}(\bs{\xi}, \bs{\theta})$.

\begin{proposition}
$\G{LA}(\bs{\xi}, \bs{\theta})$ is given by
\begin{align}\label{eq:sgd_mcla}
\G{LA} (\bs{\xi}, \bs{\theta}) = - \frac{1}{2} \bs{\Sigma}^{-1} (\bs{\xi}, \bs{\theta}) \colon \nabla_{\bs{\xi}} \bs{\Sigma}  (\bs{\xi}, \bs{\theta}) = - \sum\limits_{k=1}^{d} \sigma_k^{-1} \nabla_{\bs{\xi}} \sigma_k,
\end{align}
where $\left\{ \sigma^2_i \right\}^{d}_{i=1} $ are the eigenvalues of $\bs{\Sigma}(\bs{\xi}, \bs{\theta})$.
\end{proposition}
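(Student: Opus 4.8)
The plan is to recognize $\G{LA}$ as the gradient with respect to $\bs{\xi}$ of a single summand of the MCLA estimator \eqref{eq:MCLA}, namely
\begin{equation*}
\phi(\bs{\xi}, \bs{\theta}) = -\frac{1}{2}\log\left(\text{det}(2\pi\bs{\Sigma}(\bs{\xi},\bs{\theta}))\right) - \frac{d}{2} - \log(\pi(\bs{\theta})),
\end{equation*}
consistent with the stochastic-gradient convention of setting the outer sample size to one. First I would observe that the constant term $-d/2$ and the prior term $-\log(\pi(\bs{\theta}))$ are independent of $\bs{\xi}$ and hence vanish under $\nabla_{\bs{\xi}}$, so that $\G{LA} = -\frac{1}{2}\nabla_{\bs{\xi}}\log(\text{det}(2\pi\bs{\Sigma}))$. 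Expanding $\text{det}(2\pi\bs{\Sigma}) = (2\pi)^d\,\text{det}(\bs{\Sigma})$ and discarding the constant $d\log(2\pi)$ reduces the problem to differentiating $\log(\text{det}(\bs{\Sigma}(\bs{\xi},\bs{\theta})))$.

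The first equality then follows from Jacobi's formula for the derivative of a log-determinant: applied componentwise to each entry $\xi_j$ of $\bs{\xi}$, it gives $\partial_{\xi_j}\log(\text{det}(\bs{\Sigma})) = \text{tr}(\bs{\Sigma}^{-1}\,\partial_{\xi_j}\bs{\Sigma})$, which, because both $\bs{\Sigma}^{-1}$ and $\nabla_{\bs{\xi}}\bs{\Sigma}$ are symmetric, coincides in the tensor notation of the paper with the double contraction $\bs{\Sigma}^{-1}\colon\nabla_{\bs{\xi}}\bs{\Sigma}$. Carrying the factor $-\tfrac{1}{2}$ through yields $\G{LA} = -\frac{1}{2}\bs{\Sigma}^{-1}\colon\nabla_{\bs{\xi}}\bs{\Sigma}$.

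For the second equality I would pass to the spectral representation of the covariance. Since $\bs{\Sigma}$ is symmetric positive definite it admits eigenvalues $\{\sigma_i^2\}_{i=1}^d$, and $\text{det}(\bs{\Sigma}) = \prod_{i=1}^d \sigma_i^2$, so that $\log(\text{det}(\bs{\Sigma})) = 2\sum_{i=1}^d\log(\sigma_i)$. Differentiating term by term gives $\nabla_{\bs{\xi}}\log(\text{det}(\bs{\Sigma})) = 2\sum_{k=1}^d \sigma_k^{-1}\nabla_{\bs{\xi}}\sigma_k$, and the prefactor $-\tfrac{1}{2}$ cancels the $2$ to produce $-\sum_{k=1}^d\sigma_k^{-1}\nabla_{\bs{\xi}}\sigma_k$.

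The main subtlety — and the step I expect to need the most care — is justifying the eigenvalue form, since it tacitly uses differentiability of the $\sigma_k$ as functions of $\bs{\xi}$, which is only guaranteed when the eigenvalues are simple. A cleaner, assumption-free route is to verify that the two expressions agree directly: writing $\bs{\Sigma} = \bs{Q}\,\text{diag}(\sigma_i^2)\,\bs{Q}^\top$ and substituting into $\text{tr}(\bs{\Sigma}^{-1}\partial_{\xi_j}\bs{\Sigma})$, the contributions involving $\partial_{\xi_j}\bs{Q}$ reduce to traces of antisymmetric matrices (the antisymmetry coming from differentiating $\bs{Q}^\top\bs{Q}=\bs{I}$) and therefore vanish, leaving precisely $\sum_k \sigma_k^{-2}\,\partial_{\xi_j}(\sigma_k^2) = 2\sum_k\sigma_k^{-1}\partial_{\xi_j}\sigma_k$. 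This confirms the second equality without appealing to eigenvalue differentiability beyond the diagonal block.
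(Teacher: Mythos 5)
Your proposal is correct and follows essentially the same route as the paper: dropping the $\bs{\xi}$-independent terms of the MCLA integrand, applying Jacobi's formula to obtain $-\frac{1}{2}\bs{\Sigma}^{-1}\colon\nabla_{\bs{\xi}}\bs{\Sigma}$, and then differentiating the determinant via its eigenvalue product to reach $-\sum_{k}\sigma_k^{-1}\nabla_{\bs{\xi}}\sigma_k$ (the paper also goes on to express $\nabla_{\bs{\xi}}\bs{\Sigma}$ explicitly through \eqref{eq:sigmahat}, but that is extra material not needed for the stated equalities). Your caveat about eigenvalue differentiability, together with the orthogonal-decomposition check, is a legitimate point of care that the paper's proof passes over silently; note only that writing $\bs{\Sigma}=\bs{Q}\,\mathrm{diag}(\sigma_i^2)\,\bs{Q}^\top$ with differentiable factors presupposes a smooth eigendecomposition, so it is not entirely assumption-free either.
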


\begin{proof}
Considering the gradient of the integrand of \eqref{eq:I.Lap},
\begin{equation}
\G{LA}(\bs{\xi}, \bs{\theta}) =  \nabla_{\bs{\xi}}\left( -\frac{1}{2} \log \left( \text{det}\left(2 \pi\bs{\Sigma}(\bs{\xi}, \bs{\theta})\right) \right) - \frac{d}{2} - \log(\pi(\bs{\theta})) \right),
\end{equation}
and since the prior does not depend on $\bs{\xi}$, we write the \SG{LA} estimator using Jacobi's formula as
\begin{align}
\G{LA}(\bs{\xi}, \bs{\theta})
&= \nabla_{\bs{\xi}}\left( -\frac{1}{2} \log \left( \text{det}\left(2 \pi\bs{\Sigma}(\bs{\xi}, \bs{\theta})\right) \right) \right)\nonumber \\
&=  \frac{-1}{2\,\text{det}\, \bs{\Sigma}(\bs{\xi}, \bs{\theta})} \nabla_{\bs{\xi}}\left( \text{det}\,\bs{\Sigma} (\bs{\xi}, \bs{\theta}) \right) \nonumber \\
&= - \frac{1}{2} \text{tr}\left(\bs{\Sigma}^{-1}(\bs{\xi}, \bs{\theta}) \cdot \nabla_{\bs{\xi}} \bs{\Sigma}  (\bs{\xi}, \bs{\theta}) \right) \nonumber \\
&= - \frac{1}{2} \bs{\Sigma}^{-1} (\bs{\xi}, \bs{\theta}) \colon \nabla_{\bs{\xi}} \bs{\Sigma}  (\bs{\xi}, \bs{\theta}). \label{eq:grad.F.1}
\end{align}
Considering \eqref{eq:sigmahat}, we write the gradient of $\bs\Sigma^{-1}$ as
\begin{equation}\label{eq:grad.Sigma_inv}
\nabla_{\bs{\xi}} \bs\Sigma^{-1}(\bs{\xi}, \bs{\theta}) = 2 N_e\,\text{Sym}\left(\nabla_{\bs{\xi}} \nabla_{\bs{\theta}}\bs{g}(\bs{\xi}, \bs{\theta}) \cdot \bs{\Sigma}_{\epsilon}^{-1} \cdot \nabla_{\bs{\theta}}\bs{g}(\bs{\xi}, \bs{\theta})\right),
\end{equation}
where $\text{Sym}(\cdot)$ is the symmetric algebraic operator $\text{Sym}_{ij}(\bs{A}) = \frac{1}{2}(A_{ij} + A_{ji})$.
Moreover, the gradient of a nonsingular square matrix $\bs{A}$ can be written as $\nabla_{\bs{x}} \bs{A} = -\bs{A} \cdot \nabla_{\bs{x}} \bs{A}^{-1} \cdot \bs{A}$  or, in index notation, as $ \frac{\partial A_{ij}}{\partial x_s} = -A_{ik} \frac{\partial A^{-1}_{kl}}{\partial x_s} A_{lj}$.
Then, we express $\nabla_{\bs{\xi}} \bs\Sigma$ using \eqref{eq:grad.Sigma_inv} as
\begin{equation}\label{eq:grad.Sigma}
\nabla_{\bs{\xi}} \bs\Sigma(\bs{\xi}, \bs{\theta}) = - 2 N_e\,\bs\Sigma(\bs{\xi}, \bs{\theta})\cdot\text{Sym}\left(\nabla_{\bs{\xi}} \nabla_{\bs{\theta}}\bs{g}(\bs{\xi}, \bs{\theta}) \cdot \bs{\Sigma}_{\epsilon}^{-1} \cdot \nabla_{\bs{\theta}}\bs{g}(\bs{\xi}, \bs{\theta})\right)\cdot\bs{\Sigma}(\bs{\xi}, \bs{\theta}),
\end{equation}
or, in index notation, as
\begin{equation}
	\frac{\partial \Sigma_{uv}}{\partial \xi_s} = - 2 N_e \Sigma_{ul} \, \text{Sym}_{lm} \left(\frac{\partial^2 g_i}{\partial{\xi}_s\partial{\theta}_l} (\Sigma_{\epsilon}^{-1})_{ij} \frac{\partial g_j}{\partial{\theta}_m} \right) \Sigma_{mv}.
\end{equation}
Therefore, we can write \eqref{eq:grad.F.1} as
\begin{align}\label{eq:grad.F.2}
\G{LA}(\bs{\xi}, \bs{\theta}) & = N_e \, \bs{\Sigma}(\bs{\xi}, \bs{\theta})^{-1} \colon \left[\bs{\Sigma}(\bs{\xi}, \bs{\theta}) \cdot \text{Sym}\left(\nabla_{\bs{\xi}} \nabla_{\bs{\theta}}\bs{g}(\bs{\xi}, \bs{\theta}) \cdot \bs{\Sigma}_{\epsilon}^{-1} \cdot \nabla_{\bs{\theta}}\bs{g}(\bs{\xi}, \bs{\theta})\right) \cdot \bs{\Sigma}(\bs{\xi}, \bs{\theta}) \right] \nonumber \\
& = N_e \, \bs{\Sigma}(\bs{\xi}, \bs{\theta}) \colon \text{Sym}\left(\nabla_{\bs{\xi}} \nabla_{\bs{\theta}}\bs{g}(\bs{\xi}, \bs{\theta}) \cdot \bs{\Sigma}_{\epsilon}^{-1} \cdot \nabla_{\bs{\theta}}\bs{g}(\bs{\xi}, \bs{\theta})\right).
\end{align}
Thus, in index notation, the $s^{th}$ component of $\G{LA}$ is given by
\begin{align} \label{eq:G_LA_index}
\left(\G{LA} (\bs{\xi}, \bs{\theta})\right)_s & = N_e \, \Sigma_{ul} \, \text{Sym}_{lm} \left(\frac{\partial^2 g_i}{\partial{\xi}_s\partial{\theta}_l} (\Sigma_{\epsilon}^{-1})_{ij} \frac{\partial g_j}{\partial{\theta}_m} \right) \Sigma_{mv} \Sigma_{uv} ^{-1} \nonumber \\
& = N_e \, \Sigma_{ml} \, \text{Sym}_{lm} \left(\frac{\partial^2 g_i}{\partial{\xi}_s\partial{\theta}_l} (\Sigma_{\epsilon}^{-1})_{ij} \frac{\partial g_j}{\partial{\theta}_m} \right).
\end{align}
Moreover, considering that $\left\{ \sigma^2_i \right\}^{d}_{i=1} $ are the eigenvalues of $\bs{\Sigma}$, we can write the determinant of $\bs{\Sigma}$ as $\prod\limits_{i=1}^{d}\sigma_i^{2}$.
Then, to explicitly show the relation between the \SG{LA} estimator and the eigenvalues of the covariance of the posterior pdf, we rewrite the gradient in \eqref{eq:grad.F.1} as
\begin{align}
\G{LA}(\bs{\xi}, \bs{\theta})
&= \frac{-1}{2\,\text{det}\, \bs{\Sigma}(\bs{\xi}, \bs{\theta})} \nabla_{\bs{\xi}}\left( \text{det}\,\bs{\Sigma} (\bs{\xi}, \bs{\theta}) \right) \nonumber \\
&= -\frac{1}{2} \prod\limits_{i=1}^{d}\sigma_i^{-2} \nabla_{\bs{\xi}}\left(\prod\limits_{j=1}^{d}\sigma_j^2\right) \nonumber \\
&= -\frac{1}{2} \prod\limits_{i=1}^{d}\sigma_i^{-2} \sum\limits_{k=1}^{d} \left( \nabla_{\bs{\xi}} \sigma_k^2 \prod\limits_{\substack{j=1\\j\neq k}}^{d} \sigma_j^2 \right) \nonumber \\
&= -\frac{1}{2} \sum\limits_{k=1}^{d} \sigma_k^{-2} \nabla_{\bs{\xi}} \sigma_k^2 \nonumber \\ &= - \sum\limits_{k=1}^{d} \sigma_k^{-1} \nabla_{\bs{\xi}} \sigma_k.
\end{align}
Finally, from \eqref{eq:sgd_mcla}, we state that maximizing the expected information gain is equivalent to minimizing the sum of the logarithm of the posterior standard deviations.
\end{proof}

% The \SG{LA} estimator only requires a single evaluation of the Jacobian of the model with respect to the parameters and a gradient of the Jacobian with respect to the optimization parameters.
The \SG{LA} estimator requires the cross-partial derivatives of the model $\bs{g}$ with respect to $\bs{\xi}$ and $\bs{\theta}$, as can be seen on \eqref{eq:G_LA_index}.
Therefore, when the forward finite differences are applied with respect to both $\bs{\xi}$ and $\bs{\theta}$, the cost of the estimator per evaluation is $(\textrm{dim}(\bs{\xi})+1)(d+1) h^{-\varrho}$.

\subsection{Stochastic gradient of the double loop Monte Carlo with Laplace-based importance sampling estimator}\label{sec:sgd_dlmcis}

In the same spirit as the DLMCIS estimator, we reduce the occurrence probability of numerical underflow by changing the measure in the evidence estimation using Laplace approximation.
% To avoid the bias of the Laplace approximation and to improve the efficiency of the MC sampling used to estimate the evidence in $\nabla_{\bs{\xi}}f$, we introduce the \SG{MCIS} estimator,
Let \SG{MCIS} be the stochastic gradient of DLMCIS
\begin{equation}
 \G{MCIS}(\bs{\xi}, \bs{\theta}, \bs{Y}) = \nabla_{\bs{\xi}} \left( \log \left( \frac{p(\bs{Y} | \bs{\theta}, \bs{\xi})}{\frac{1}{M} \sum_{m=1}^{M} \ell(\bs{Y} | \bs{\theta}^*_{m}, \bs{\xi})}\right) \right).
\label{eq:gradis}
\end{equation}
Note that $\bs{\theta}$ is sampled from the prior pdf $\pi(\bs{\theta})$, whereas $\bs{\theta}^*_m$ is sampled from the Laplace importance sampling pdf $\pi_{_{\hbox{\tiny{LA}}}}(\bs{\theta}^*|\bs{Y}, \bs{\xi})$.
From \eqref{eq:gradis}, it can be seen that $\G{MCIS}$ is an asymptotically unbiased estimator of $\mathbb{E}_{\bs{\theta},\bs{Y}}[\nabla_{\bs{\xi}}f(\bs{\xi},\bs{\theta},\bs{Y})]$.

To evaluate $\G{MCIS}$, we estimate the MAP value solving \eqref{eq:thetahat}, and then we evaluate the covariance matrix at the MAP estimate using \eqref{eq:sigmahat}.
The gradient is evaluated using the $\bs{\theta}^*$ sampled using importance sampling.
If forward finite differences are used to approximate the derivatives, the cost of evaluating \eqref{eq:gradis} is $[d + 1 + \C{MAP} + (\textrm{dim}(\bs{\xi})+1)M ]h^{-\varrho}$.
However, $M$ is much smaller for \SG{MCIS} than for \SG{MC} due to the more efficient sampling \cite{Beck2017fast}.

\section{Optimization methods}\label{sec:Opt}
We present three stochastic optimization methods to solve the OED problem: SGD, ASGD, and rASGD.
We combine these with the stochastic gradient estimators presented in Section \ref{sec:SGestimators}, e.g., rASGD using \SG{LA} is denoted as \rASGD{LA}.
We recall that $f$ is assumed to be smooth enough with respect to both $\bs{\xi}$ and $\bs{\theta}$.
We consider that the steepest descent algorithm of the maxima search, using the \textit{full gradient} (FGD) and starting at $\bs{\xi}_0$, is given by
\begin{equation}
\bs{\xi}_{k+1} = \bs{\xi}_k +  \alpha_k \nabla_{\bs{\xi}} \mathbb{E}_{\bs{\theta}, \bs{Y}}[ f(\bs{\xi}_k, \bs{\theta}, \bs{Y})],\quad k \geq 0,
\label{eq:gradbasedopt}
\end{equation}
where $\alpha_k$ is a step-size sequence of positive values, also known as learning rates.
Based on this algorithm, we present the three stochastic optimization methods that we apply to OED.

\subsection{Stochastic gradient descent} \label{sec:SGD}
SGD estimates the gradient, based on the stochastic approximation introduced by Robbins and Monro \cite{robbins1951stochastic, kiefer1959optimum, lai1979adaptive}, cumulatively, and throughout several iterations.
It requires only one sample per iteration.
We write SGD for OED as
\begin{equation}\label{eq:sgd}
\bs{\xi}_{k+1} = \bs{\xi}_{k} + \alpha_k \mathcal{G}(\bs{\xi}_k, \bs{\theta}_k, \bs{Y}_{k}), \quad k \geq 0,
\end{equation}
where $\bs{\theta}_k$ is sampled independently from $\pi(\bs{\theta})$ for each iteration, and $\bs{Y}_k$ is sampled from $p(\bs{Y} |\bs{\theta}_k , \bs{\xi})$.
Additionally, $\mathcal{G}$ is any of the stochastic gradient estimators $\G{MC}$, $\G{LA}$, or $\G{MCIS}$ presented in Section \ref{sec:SGestimators} evaluated with the singleton sample set $\left\{\bs{\theta}_k,\bs{Y}_k\right\}$.
In this framework, SGD evaluates jointly the expectations over both $\bs{\theta}$ and $\bs{Y}$; the statistical error averages out as more iterations are completed.
This can be motivated by using \eqref{eq:sgd} to write
\begin{align}
  \bs{\xi}_{k+1} = \bs{\xi}_0 +  \sum_{i=0}^{k} \alpha_i \mathcal{G}(\bs{\xi}_i, \bs{\theta}_i,\bs{Y}_i),
\end{align}
remembering that $\mathcal{G}$ is an unbiased estimator of the gradient of the objective function to be minimized.
% Since $\mathcal{G}$ is an unbiased estimator of $\nabla_{\bs{\xi}} f$,
% \begin{align}
%   \mathbb{E}_{\bs{\theta},\bs{Y}}[\bs{\xi}_{k+1}] &= \bs{\xi}_0 +  \sum_{i=0}^{k} \alpha_i \mathbb{E}_{\bs{\theta},\bs{Y}} [  \nabla_{\bs{\xi}} f(\bs{\xi}_i, \bs{\theta}_i,\bs{Y}_i)]\\
% 	&= \bs{\xi}_0 +  \sum_{i=0}^{k} \alpha_i  \nabla_{\bs{\xi}} F(\bs{\xi}_i)
% \end{align}

For SGD to converge to the optimum, the step-size must decrease as the number of iterations increases.
Robbins and Monro \cite{robbins1951stochastic} prove convergence when the step-size is a divergent series with squared convergence, i.e., $\alpha_k = {\alpha_0}/{k}$.
%\begin{equation}\label{eq:stepsize}
%\sum_{k=1}^{\infty} \alpha_k = \infty \quad \textrm{and} \quad \sum_{k=1}^{\infty} \alpha^2_k < \infty,
%\end{equation}
%the convergence of $(\bs{\xi}_k)_{k\geq 0}$ to the optimum is guaranteed, i.e. $||\bs{\xi}_k - \bs{\xi}^*|| \xrightarrow{} 0$ as $k$ goes to infinity.
% Robbins and Monro \cite{robbins1951stochastic} proposed the natural step-size sequence of $\alpha_k = \frac{\alpha_0}{k}$ with $\alpha_0 > 0$.
Polyak and Juditsky \cite{polyak1992acceleration} prove that the average of $\{\bs{\xi}_i\}_{i=0}^k$ converges to the optimum when the step-size sequence satisfies $\alpha_k = \alpha_0/k^\beta$ for $1/2 < \beta < 1$.
%Neither Robbins and Monro \cite{robbins1951stochastic} nor Polyak and Juditsky \cite{polyak1992acceleration} discuss the initial step-size $\alpha_0$.
For an objective function whose gradient is $L$-Lipschitz continuous, Nemirovski \cite{nemirovski2005efficient} uses a step-size of $\alpha_k = \alpha_0/\sqrt{k}$, with $\alpha_0 = D/L$ and $D$ being the diameter of the search space.
Nemirovski \cite{nemirovski2005efficient} proves that, in this case, the weighted sliding average $\bar{\bs{\xi}}$ converges to the optimum at a rate of $\mathcal{O}(1/\sqrt{k})$, with
\begin{equation}\label{eq:sliding.average}
	\bar{\bs{\xi}}_k = \left(\sum_{\frac{k}{2}\le i \le k} \alpha_i\right)^{-1}
	\sum_{\frac{k}{2}\le i \le k} \alpha_i \bs{\xi}_i.
\end{equation}
For the strongly convex case, Nemirovski \cite{nemirovski2005efficient} also proves that stochastic gradient descent with a sliding average achieves a convergence of $\mathcal{O}(1/k)$ when the step $\alpha_0$ satisfies $\alpha_0 \mu < 1$, where $\mu$ is the strong-convexity constant.
Here, we follow the approach of Nemirovski \cite{nemirovski2005efficient} and adopt the step-size sequence $\alpha_k = {\alpha_0}/{\sqrt{k}}$, given that we assume $\mu$ to be unknown.
According to Nemirovksi \cite{nemirovski2005efficient}, the convergence of the objective function in SGD is bounded as
\begin{equation} \label{eq:boundSGD}
	\red{
  \mathbb{E}_{\bs{\theta}}[f(\bs{\xi}^*,\bs{\theta})
	- f(\bs{\bar\xi}_k,\bs{\theta})]
	}
	\le
  \left(2 \sum_{\frac{k}{2}\le i \le k} \alpha_i\right)^{-1}
  \left[
  D^2 + \left( \sigma^2 +  \mathbb{E}[\|\nabla f(\bs{\xi}_k,\bs{\theta}) \|]^2\right)
  \sum_{\frac{k}{2}\le i \le k} \alpha_i^2
  \right],
\end{equation}
where
\begin{equation}
  \sigma^2 \geq ~ \underset{\bs{\xi} \in \Xi}{\text{sup}} ~ \mathbb{V}[\| \nabla f (\bs{\xi},\bs{\theta})\|].
\end{equation}
Using step-size $\alpha_k = \alpha_0/\sqrt{k}$, as $k \rightarrow \infty$, $\left(\sum_{\frac{k}{2}\le i \le k} \alpha_i\right)^{-1}$ decreases faster than $\left(\sum_{\frac{k}{2}\le i \le k} \alpha_i\right)^{-1}  $ $\sum_{\frac{k}{2}\le i \le k} \alpha_i^2$, despite both converging sublinearly.
Consequently, the $\sigma^2 + \mathbb{E}[\|\nabla f \|]^2$ term dominates convergence if optimization runs for long enough.
Moreover, as SGD approximates local optima and the norm of the true gradient decreases, the gradient norm variance dominates the convergence.

For the example described in Section \ref{sec:ex1}, Figure \ref{fig:QuadraticMBByCost} depicts how the distance from the optimal design evolves as a function of the number of gradient evaluations for different sample sizes $N$.
Figure \ref{fig:MB1} shows the distance to the optimum for $\bs{\xi}$ and Figure \ref{fig:MB1} shows the distance for its sliding average $\bar{\bs{\xi}}$.
\begin{figure}[ht]
	\centering
  \begin{subfigure}[t]{0.48\textwidth}
    \centering
	\includegraphics[width=\textwidth]{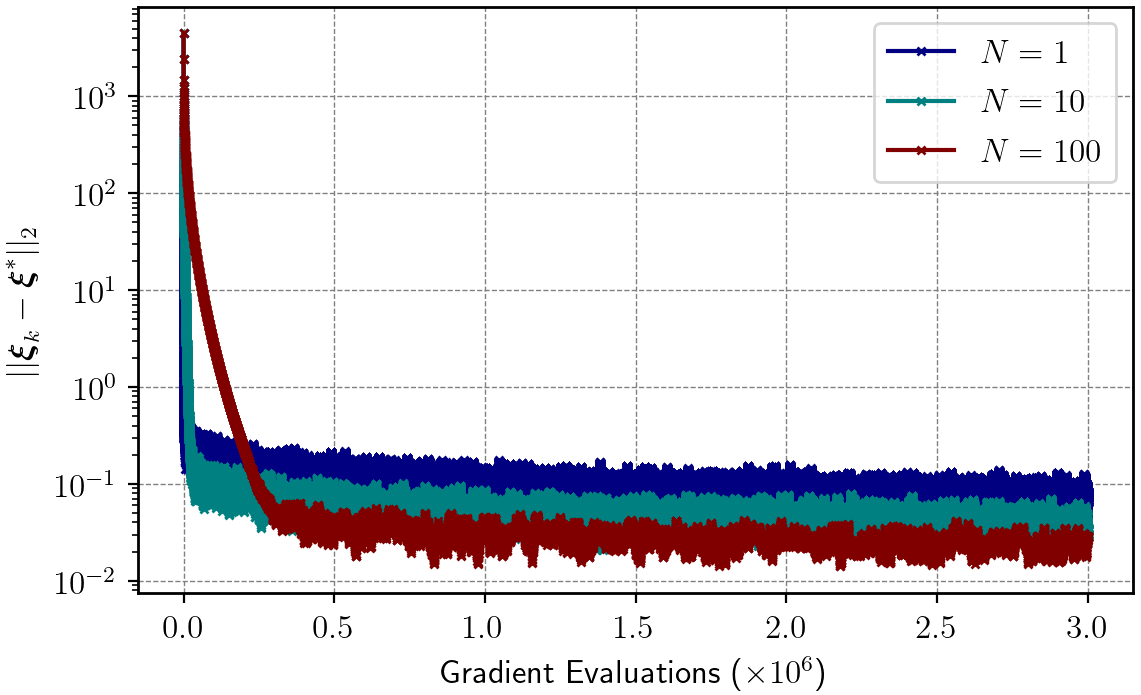}
  \caption{}
  \label{fig:MB1}
\end{subfigure}
\begin{subfigure}[t]{0.48\textwidth}
  \centering
  \includegraphics[width=\textwidth]{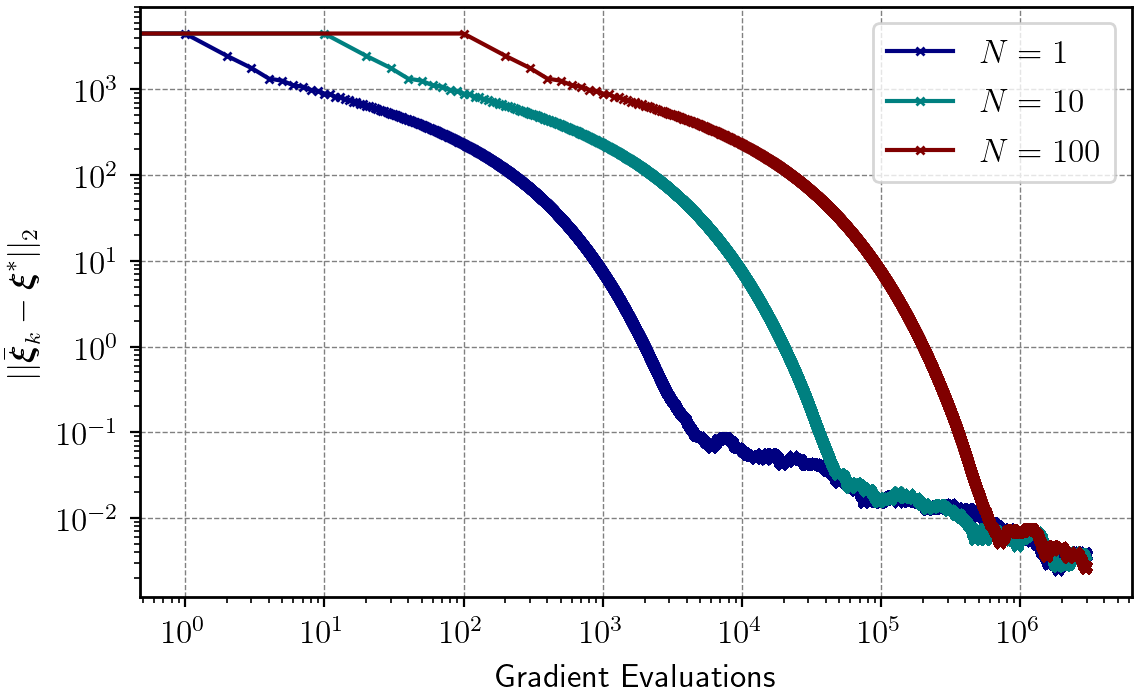}
  \caption{}
  \label{fig:MB2}
\end{subfigure}
\caption{Convergences of $\bs{\xi}$ (left) and $\bar{\bs{\xi}}$ (right, cf. \eqref{eq:sliding.average}) for the quadratic function (Example 1) using SGD with different mini-batch sample sizes $N$.}
\label{fig:QuadraticMBByCost}
\end{figure}
% In Figure \ref{fig:QuadraticMBByCost}, SGD converges fast up to a point, where the convergence changes to a sublinear regime.
In Figure \ref{fig:MB1}, it can be seen that, in initial iterations, SGD converges similarly to the steepest descent method.
As optimization progresses and the noise in the gradient estimates dominates convergence, sublinearity emerges.
To illustrate the sublinear regime, we plot Figure \ref{fig:MB2} in logarithmic scale on both axes; thus the Q-sublinear convergence shows as linear in the plot.
Independently of the mini-batch size used, SGD's asymptotic convergence is limited by the same lower bound: the term in \eqref{eq:boundSGD} containing $\sigma^2$.
Mini-batch sampling of size $N$ reduces the variance to $\sigma^2/N$, but the cost per iteration is also increased by the same amount; therefore, cost-wise, the lower bound remains the same, as shown in Figure \ref{fig:MB2}.
According to Cotter et al \cite{cotter2011better}, the advantage of using mini-batch sampling is that parallelization can be used to speed up the optimization process.
Cotter et al \cite{cotter2011better} use a distributed mini-batch technique to parallelize SGD without losing efficiency.
However, in terms of the total number of gradient evaluations required to achieve a certain tolerance, SGD without mini-batch sampling is more efficient than SGD using simple mini-batch sampling.

\subsection{Nesterov's accelerated gradient descent} \label{sec:Nesterov}
The Nesterov gradient scheme is a first-order accelerated method for \emph{deterministic} optimization \cite{nesterov1983method,nesterov2013introductory,nemirovski2005efficient}.
The basic idea is to use a momentum (an analogy to linear momentum in physics \cite{rumelhart1986learning,o2015adaptive}) that determines the step to be performed, based on information from previous iterations.
The Nesterov gradient scheme is considered \textit{accelerated} because, for convex and smooth objective functions with $L$-Lipschitz gradient, it improves the convergence rate of the objective function from $\mathcal{O}(1/k)$ to $\mathcal{O}(1/k^2)$, which is provably the optimal convergence rate for first-order optimization methods in this class of problems \cite{nesterov2013introductory}.

Nesterov's accelerated gradient descent (AGD) algorithm for the Bayesian design optimization problem in \eqref{eq:infopt} is defined as
\begin{equation}
\left\{
\begin{array}{ll}
 \bs{z}_{k+1} = \bs{\xi}_k + \alpha \nabla_{\bs{\xi}}\mathbb{E}_{\bs{\theta}, \bs{Y}} \left[ f(\bs{\xi}_k, \bs{\theta}, \bs{Y})\right] \\\\
\bs{\xi}_{k+1} = \bs{z}_{k+1} + \gamma_{k+1} (\bs{z}_{k+1} - \bs{z}_k).
\end{array}
\right. \label{eq:nesterov}
\end{equation}

Here, the sequence $\left(\gamma_k \right)_{k\geq 0}$ is given by
\begin{equation}
\gamma_{k+1} = \frac{\lambda_k(1-\lambda_k)}{\lambda_k^2 + \lambda_{k+1}},
\end{equation}
where the sequence $\left(\lambda_k\right)_{k\geq 0}$ solves
\begin{equation}
\lambda_{k+1}^2 = (1 - \lambda_{k+1})\lambda_{k}^2 + q \lambda_{k+1}, \quad \lambda_0 = 1,
\end{equation}
and $q$ is a positive real number that is less than one ($q \in (0,1)$).
The constant $q$ defines how much momentum is used in the acceleration, e.g., setting $q=1$ results in the classical steepest descent algorithm.
Usually, a value of $0$ is specified for $q$, resulting in the original algorithm proposed by Nesterov \cite{nesterov1983method}.
Since AGD is a deterministic method, a fixed step-size $\alpha$ is used.

Using the stochastic gradient estimators presented in Section \ref{sec:SGestimators}, we obtain the ASGD method as
\begin{equation}
\left\{
\begin{array}{ll}
\bs{z}_{k+1} = \bs{\xi}_k + \alpha_k \mathcal{G}(\bs{\xi}_k, \bs{\theta}_k, \bs{Y}_k) \\\\
\bs{\xi}_{k+1} = \bs{z}_{k+1} + \gamma_{k+1} (\bs{z}_{k+1} - \bs{z}_k) ,
\end{array}
\right. \label{eq:ASGD}
\end{equation}
using decreasing step-sizes as discussed in Section \ref{sec:SGD}.

The use of Nesterov's acceleration in stochastic optimization is not novel, and many publications have addressed the subject in the training process in machine learning \cite{johnson2013accelerating,nitanda2016accelerated,allen2017katyusha}.
However, all of those studies combine stochastic gradient methods with variance-reduction techniques due to the sensitivity of ASGD to noise in the gradient estimation.
Cotter et al. \cite{cotter2011better} show that the lower bound of the asymptotic convergence rate for ASGD is the same as for SGD, i.e., acceleration does not improve the convergence rate of SGD in the asymptotic phase.
However, on the non-asymptotic phase of SGD, Nesterov's acceleration can improve the convergence rate.
In the next section, we present a restart method used to improve the convergence rate of ASGD.

\subsection{Restart Nesterov method} \label{sec:restart}
When using Nesterov's acceleration with $q=0$, oscillations of the algorithms around the optimum because of an excess of momentum are common.
% To avoid this problem, Nesterov \cite{nesterov2013introductory} presents the optimal value for the parameter $q$ for strongly-convex problems in the deterministic setting.
For strongly convex first-order $L$-Lipschitz problems, where $\mu$ is the strong-convexity constant (i.e., $\mu \preceq \nabla \nabla f \preceq L$ with $f$ being the objective function), Nesterov \cite{nesterov2013introductory} proves that $q^*=\mu / L$ achieves the optimal convergence rate for first-order optimization methods.
% Thus, the optimal $q^*$ is the inverse of the conditioning number of the Hessian of $\mathbb{E}[f]$.
For $q < q^*$, the momentum is excessive and leads to the aforementioned oscillations around the optima; for $q > q^*$, the convergence is suboptimal.
The quantities $\mu$ and $L$ are expensive to estimate for OED problems based on PDE models.
O'Donoghue and Cand\`es \cite{o2015adaptive} propose an alternative method for achieving the same convergence rate as with $q^*$ without evaluating $\mu$ and $L$ for the deterministic case.
Their method consists of restarting the acceleration whenever the optimizer moves in an unwanted direction, e.g., for the maximization of $I$, when
\begin{equation}
\nabla_{\bs{\xi}} \mathbb{E}_{\bs{\theta}, \bs{Y}} \left[ f(\bs{\xi}_{k}, \bs{\theta}, \bs{Y})\right] \cdot (\bs{\xi}_{k} - \bs{\xi}_{k-1}) < 0.
\end{equation}
This simple restart technique improves the convergence rate of Nesterov's acceleration without needing to tune $q$, i.e., $q$ can be set to 0.
O'Donoghue and Candès \cite{o2015adaptive} also propose a third, equally efficient method based on verifying whether or not the objective function is decreasing.
However, this method requires the objective function to be evaluated for each step.
Since we are already evaluating the gradient during each iteration, we choose to restart the momentum using the gradient verification.
Su, Boyd and Candès \cite{su2016differential} propose another criterion for the restart based on the increase of speed, i.e., restart if $||\bs{\xi}_{k}-\bs{\xi}_{k-1}|| < ||\bs{\xi}_{k-1} - \bs{\xi}_{k-2}||$; however, the gradient-based restart performs significantly better in their numerical evaluations.
Since we cannot observe the true gradient, we use the stochastic approximation of the gradient as the criterion to perform the restart, i.e.,
\begin{equation}
\mathcal{G}(\bs{\xi}_{k}, \bs{\theta}_k, \bs{Y}_k) \cdot (\bs{\xi}_{k} - \bs{\xi}_{k-1}) < 0,
\end{equation}
where $\mathcal{G}$ may be any of the estimators in Section \ref{sec:SGestimators}.

In Table \ref{tab:convergences}, we present the orders of the lower bounds for the optimality gap ($||\mathbb{E}[f(\bs{\xi}_k) - f(\bs{\xi}^*)]||$) for the full-gradient descent (FGD), AGD, SGD, and ASGD.
FGD uses the gradient of the expectation; therefore, in this respect, it is a deterministic optimizer.
\begin{table}[H]
 \centering
 \caption{Orders of lower bounds for the asymptotic convergence rate of the optimality gap.}
 \begin{tabular}{ccc}
  \toprule
  Method & Convex                                       & Strongly-convex                                                                     \\
  FGD    & $1/k$ \cite{nesterov2013introductory}        & $\left(\frac{L-\mu}{L+\mu}\right)^k$ \cite{nesterov2013introductory}                \\
  AGD    & $1/k^2$ \cite{nesterov2013introductory}      & $\exp \left(-\frac{k \sqrt{\mu}}{\sqrt{L}} \right)$ \cite{nesterov2013introductory} \\
  SGD    & $1/ \sqrt{k}$ \cite{nemirovski2005efficient} & $1/k$ \cite{nemirovski2005efficient}                                                \\
  ASGD   & $1/ \sqrt{k}$ \cite{cotter2011better}        & --                                                                                  \\ \bottomrule
 \end{tabular}
 \label{tab:convergences}
\end{table}

In the present work, we propose the rASGD optimizer and apply it to the OED problem in combination with the estimators presented in Section \ref{sec:SGestimators}, as we shall see in Section \ref{sec:ex}.
% We assume that the noise of the gradient observations is not large compared with the gradient norm; therefore, the convergence acceleration is not lost.

\section{Numerical examples} \label{sec:ex}

In this section, we evaluate the performance of the optimization methods described above by looking at four examples.
% The notation of optimization method with the estimation approach in sub-script means that we combine that optimization method using the gradient associated to the estimation approach, e.g., \SG{MC} stands for the stochastic gradient descent using the gradient estimator $\G{MC}$.

Our first example is the stochastic optimization of a stochastic quadratic function, unrelated to OED problems.
In the second example, we draw comparisons among the performances of \SG{MC}, \SG{LA}, and \SG{MCIS} using different optimization methods (SGD, ASGD, and rASGD).
In the third example, we address the optimization of strain gauge positioning on a beam, modeled following Timoshenko beam theory, in order to measure the beam's mechanical properties.
In the fourth and last example, we identify the optimal currents imposed on electrodes during an EIT experiment in order to maximize the expected information gain about ply orientations in a composite material.
% In the fourth and last example, we identify the optimal currents that maximize the expected information gain about ply orientations in a composite material when imposed on electrodes during an EIT experiment.

In all the examples, we denote the gradient estimator used as a subscript of the optimization method, e.g., \ASGD{LA} means that we are using the ASGD algorithm with \SG{LA} as a gradient estimator.

\subsection{Example 1: Stochastic quadratic function}\label{sec:ex1}

In this first example, we evaluate the performance of stochastic optimization algorithms on finding the maximum of a quadratic function, bearing in mind that this example does not involve the Bayesian framework.
Therefore, the stochastic gradient estimators of the expected information gain, \SG{MC}, \SG{LA}, and \SG{MCIS}, are not required.
Since the problem has a closed-form solution with known optimum and derivatives, we can use the same optimal $q$ tuning as Nemirovksi \cite{nemirovski2005efficient} and compare its effect with the restart technique.

We analyze the problem of finding $\bs{\xi}$ that maximizes the expected value of a function $f(\bs{\xi}, \bs{\theta})$ with respect to $\bs{\theta}$ given as
\begin{equation}
f(\bs{\xi}, \bs{\theta}) = -\left(\frac{1}{2} \bs{\xi} \cdot \bs{A} \cdot \bs{\xi} + \bs{\xi} \cdot \bs{A} \cdot \bs{\theta} \right),
\label{eq:Quadratic}
\end{equation}
where $\bs{A}$ is a diagonal $n\times n$ matrix with elements $A_{jj} = j$ for $j =1,\cdots,n$.
The random parameters vector $\bs{\theta}: \Omega^n  \mapsto \Theta \subset \mathbb{R}^{n}$ is Gaussian-distributed with zero mean and covariance matrix $\bs{\Sigma}_{\bs{\theta}} = \text{diag}(\{\sigma_{\theta}^2\}_{i=1}^n)$.
The vector $\bs{\xi}$ is a design variable, belonging to $\Xi$, a subset of $\mathbb{R}^n$.
The objective function to be maximized is
\begin{align}
\mathbb{E}[f(\bs{\xi}, \bs{\theta})] &= -\mathbb{E}_{\theta}\left[\frac{1}{2} \bs{\xi} \cdot \bs{A} \cdot \bs{\xi} + \bs{\xi} \cdot \bs{A} \cdot \bs{\theta}\right] \\
&= -\frac{1}{2} \bs{\xi} \cdot \bs{A} \cdot \bs{\xi},
\end{align}
which has optimum $\bs{\xi}^*$ = $\bs{0}$.
The stochastic gradient $\nabla_{\bs{\xi}}  \mathbb{E}_{\theta}\left[ f(\bs{\xi}, \bs{\theta})\right]$ is $\mathcal{G}(\bs{\xi}, \bs{\theta})= -\bs{A} \cdot (\bs{\xi} + \bs{\theta})$.
Hence, for this problem, SGD \eqref{eq:sgd} becomes
\begin{align}
\bs{\xi}_{k+1} &= \bs{\xi}_k + \alpha_k \mathcal{G}(\bs{\xi}_k, \bs{\theta}_k)\\
							 &= \bs{\xi}_k - \alpha_k  \bs{A} \cdot (\bs{\xi}_k + \bs{\theta}_k).
\end{align}
The Nesterov formulation is obtained by substituting $\mathcal{G}$ in \eqref{eq:ASGD}.
Note that $\nabla_{\bs{\xi}}  \mathbb{E}_{\bs{\theta}}\left[ f(\bs{\xi}, \bs{\theta})\right] = \mathbb{E}_{\bs{\theta}}\left[\mathcal{G}(\bs{\xi}, \bs{\theta})\right]$; thus, $\mathcal{G}$ is an unbiased estimator for the gradient of the objective function.
Since $\bs{A}$ is diagonal with elements $a_{ii} = i$, the variance of the $i^{th}$ element of the estimator $\mathcal{G}$ is calculated as
\begin{align}
	\mathbb{V}[\mathcal{G}_i(\bs{\xi}, \bs{\theta})] &= i^2 \mathbb{V} [\theta_i] \label{eq:ex1var}\\
	&= i^2 \sigma_\theta^2.
\end{align}
The variance of the gradient estimation does not depend on $\bs{\xi}$ and does not vanish in the optimum.
Thus, as $\nabla_{\bs{\xi}} \mathbb{E}_{\bs{\theta}}\left[f(\bs{\xi}, \bs{\theta})\right]$ converges to zero, the relative error in gradient estimation goes to infinity.

To solve this numerical example, we opt to use $n=20$.
The estimation of the conditioning number $L/\mu$ is straightforward in this case, since the Hessian of the objective function is constant and equal to $-\bs{A}$.
The largest eigenvalue of $\bs{A}$ is $L=20$, while the smallest is $\mu=1$.
Therefore, the optimal value for the parameter $q$ is $q^*=1/20$.
Similarly, the step-size is set to $\alpha_0 = 2/(L+\mu) = 2/21$.

Figure \ref{fig:QuadraticStochastic01Conv} presents the convergence of the distance to the optimum for each method using different standard deviations for the prior pdf $\pi(\theta)$; on the left, $\sigma_{\theta}=0.1$, and on the right, $\sigma_{\theta}=0.01$.
% The SGD method is slower than the other methods, but it exhibits a monotonic decay up to a certain iteration.
% Conversely, as discussed in Section \ref{sec:restart}, Nesterov's acceleration imposes an excessive momentum that generates oscillations over the optimum, which can be corrected by tuning $q$ to be the inverse of the conditioning number of the Hessian (i.e., $q^*$).
% The restart technique achieves the same convergence as ASGD using $q^*$, but without the need for any prior knowledge about the Hessian of the objective function.
The ASGD with restart technique converges faster than ASGD with optimum $q^*$.
\begin{figure}[ht]
	\centering
  \begin{subfigure}[t]{0.48\textwidth}
    \centering
	\includegraphics[width=\textwidth]{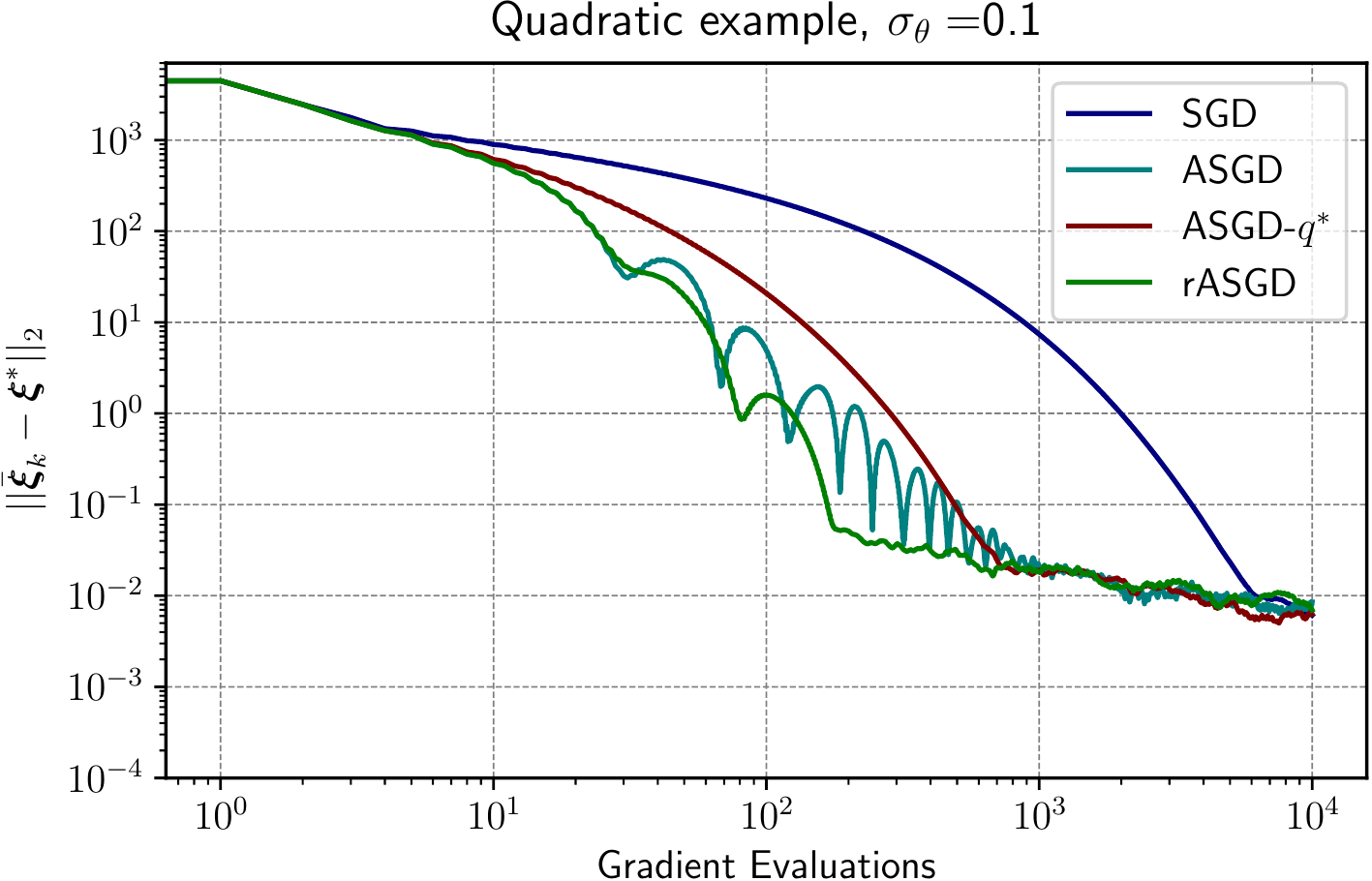}
  \caption{}
\end{subfigure}
\begin{subfigure}[t]{0.48\textwidth}
  \centering
  \includegraphics[width=\textwidth]{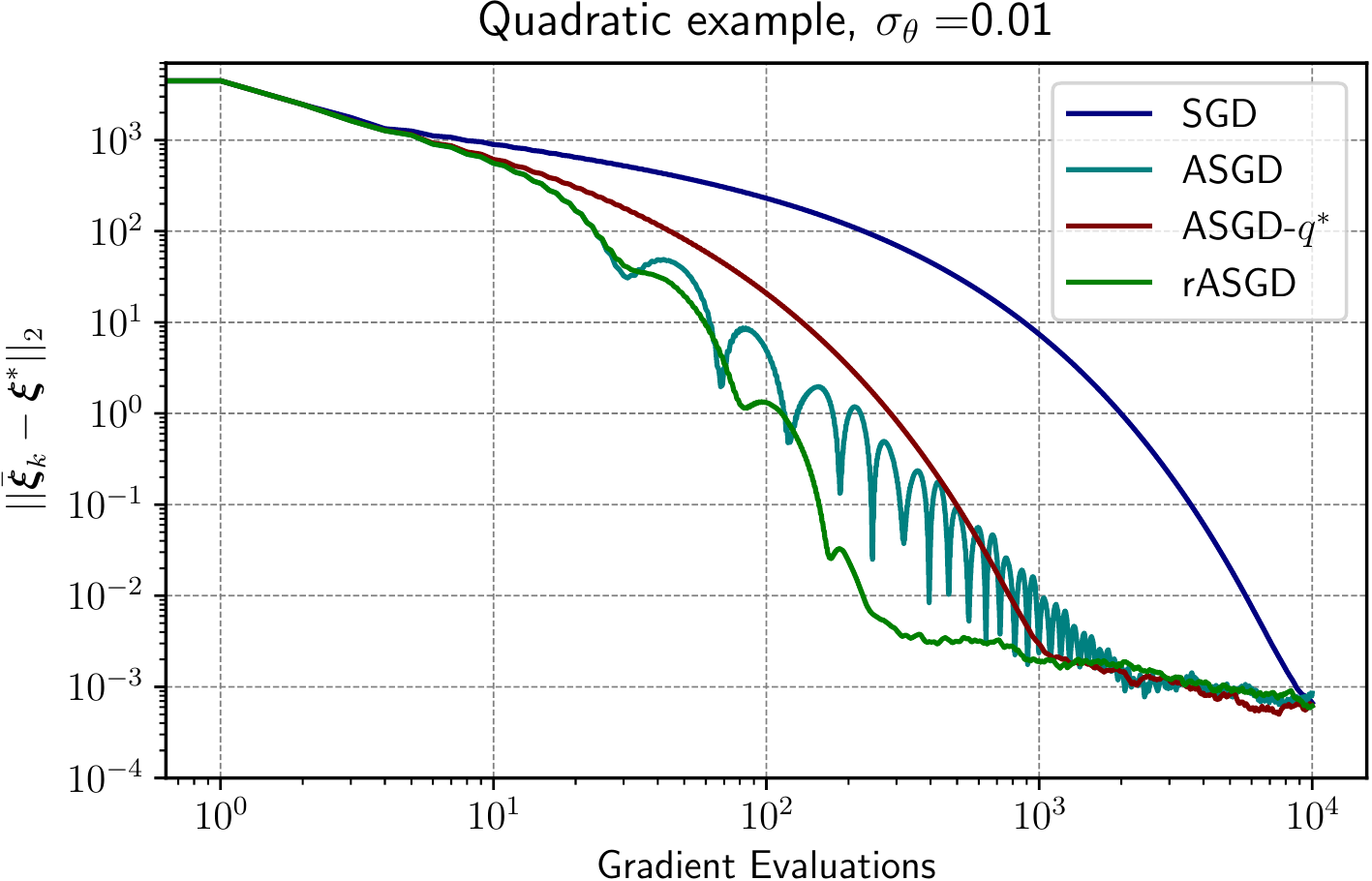}
  \caption{}
\end{subfigure}
	\caption{(Example 1): Convergence of the methods with standard deviations $\sigma_{\theta}=0.1$ (a) and $\sigma_{\theta}=0.01$(b).}
	\label{fig:QuadraticStochastic01Conv}
\end{figure}

As discussed in Section \ref{sec:SGD}, Figure \ref{fig:QuadraticStochastic01Conv} shows that the algorithms behave similarly to their deterministic counterparts up to a certain point.
As the noise in the gradient estimation becomes large in comparison to its magnitude, the convergence becomes dominated by the sublinear convergence of stochastic gradient methods.
Moreover, when the variance $\sigma_{\theta}$ is increased to $0.1$, Figure \ref{fig:QuadraticStochastic01Conv} shows that the asymptotic phase starts sooner.
In ASGD, the Nesterov acceleration imposes an excessive momentum that generates oscillations over the optimum, as discussed in Section \ref{sec:restart}.
For this example, the optimal tuning of $q$ does not improve on ASGD; however, the restart technique speeds up the convergence without the need for any prior knowledge about the Hessian of the objective function.
Figure \ref{fig:QuadraticStochastic01Conv} also shows that rASGD achieves the asymptotic phase at around $300$ gradient evaluations, whereas SGD takes almost $10000$ gradients to get to the asymptotic phase.

\subsection{Example 2: OED with quadratic model}\label{sec:ex2}

Here, we consider an OED problem based on a quadratic forward model that we devised to perform a comparative analysis of the stochastic gradients of expected information gain estimators.
We also test different combinations of these estimators with the optimization methods presented in this study.
Since $q^*$ is difficult to estimate and ASGD-$q^*$ did not perform well, we focus on FGD, SGD, ASGD, and rASGD.

The forward model is
\begin{equation}
	g(\bs{\xi}, \theta) =  \bs{\xi} \cdot \bs{A} \cdot \bs{\xi}~\theta -\bs{\xi} \cdot \bs{A} \cdot \bs{1} \theta^2 - 8 \bs{1} \theta - 1,\quad \hbox{where} \quad
	\bs{A} =
	\begin{bmatrix}
	1    & -0.2 \\
	-0.2 & 0.5
	\end{bmatrix},
\label{eq:Convex}
\end{equation}
where the scalar random variable $\theta$ is sampled from the prior pdf $\pi(\theta) = \mathcal{N}(0, 10^{-4})$, and  $\bs{\xi} \in \Xi= [-2,2]^2 \subset \mathbb{R}^2$.
The observation $y$ is
\begin{align}
y(\bs{\xi}, \theta) = g(\bs{\xi}, \theta) + \epsilon,
\end{align}
where the additive error is assumed to be Gaussian $\epsilon \sim \mathcal{N}(0, 10^{-4})$ and the number of experiments is $N_e = 1$. The initial step-size is $\alpha_0 = 1.00$.

\subsubsection{Comparison between the methods}

In this numerical test, we evaluate the performance of gradients of the expected information gain estimators (DLMC, MCLA, and DLMCIS) and their combination with optimization methods (FGD, SGD, ASGD, and rASGD).
For SGD, ASGD, and rASGD, the stochastic gradient estimators are used (\SG{MC}, \SG{LA}, and \SG{MCIS}), whereas for FGD, we use full gradients of DLMC, MCLA, and DLMCIS.
The efficiency criterion we use to compare different methods is defined as the average number of calls of the forward model (NCFM) required to approximate $\bs{\xi}^*$ for a given tolerance.
We compute the NCFM as the mean value of ten independent runs (due to the randomness of stochastic gradient methods), where we aim for an error tolerance of $0.01$, i.e., $\left\| \bs{\xi}_k - \bs{\xi}^*\right\|_2 \leq 0.01$.

To define the sample sizes for DLMC, MCLA, DLMCIS, we use the optimal sampling from Beck et al. \cite{Beck2017fast}, which we evaluate at the starting point of the optimization and keep constant during the process.
To achieve the tolerance of $0.01$ in the FGD, the optimal numbers of MC samples are  $(N^*, M^*)=(2447, 80)$ for DLMC, $(N^*,M^*)=(2402, 7)$ for DLMCIS, and $N^*=966$ for MCLA.
We use the same values of $M^*$ for the respective stochastic gradient estimators.
By adopting the forward Euler method, we compute the gradients of the model with respect to $\bs{\xi}$ using $3$ ($\dim{(\bs{\xi})}+1=3$) NCFM.
We use the Nelder-Mead algorithm \cite{nelder1965simplex} to estimate $\hat \theta$ in \eqref{eq:thetahat} for DLMCIS.

Table \ref{tab:ConvexCosts} presents the mean NCFM for different combinations of the optimization methods and gradient estimators.
The optimization methods are indicated at the top of each column, and the gradient estimators in Section \ref{sec:EIGestimators} are listed by row.
\begin{table}[H]
 \centering
 \caption{Mean NCFM over the ten runs required for the estimation of $\bs{\xi}^*$ for $\left\| \bs{\xi}_k - \bs{\xi}^*\right\|_2 \leq 0.01$.}
 \begin{tabular}{lclccc}
  \toprule
  \multicolumn{2}{c}{Full gradient} &
  \multicolumn{4}{c}{Stochastic gradient}\\
  \cmidrule(lr){1-2}
  \cmidrule(lr){3-6}
  Estim.                            & FGD                & Estim.    & SGD                & ASGD               & rASGD                       \\ \midrule
  $\nabla \I{DLMC}$                 & $2.99 \times 10^7$ & \SG{MC}   & $1.68 \times 10^5$ & $9.94 \times 10^3$ & $1.18 \times 10^4$          \\
  $\nabla \I{DLMCIS}$               & $6.57 \times 10^6$ & \SG{MCIS} & $3.18 \times 10^4$ & $3.17 \times 10^3$ & $2.56 \times 10^3$          \\
  $\nabla \I{MCLA}$                 & $2.80 \times 10^5$ & \SG{LA}   & $4.06 \times 10^3$ & $2.87 \times 10^2$ & $\mathbf{2.75 \times 10^2}$ \\ \bottomrule
 \end{tabular}
 \label{tab:ConvexCosts}
\end{table}

By analyzing the first line of Table \ref{tab:ConvexCosts}, we see that the two methods using Nesterov's acceleration (ASGD and rASGD) reduce the computational burden by three to four orders of magnitude compared to FGD.
Moreover, \rASGD{LA} estimates $\bs{\xi}^*$ in fewer than $1000$ calls of the forward model.

\subsubsection{Comparison between \SG{LA} and \SG{MCIS}}

Here, we compare the performance of \rASGD{LA} and \rASGD{MCIS} by testing the \SG{MCIS} estimator setting with variable sample sizes for the inner loop.
Figure \ref{fig:QuadOED_IS_ASGD} shows the contour of $I(\bs{\xi})$, approximated by MCLA, and the optimization path for \rASGD{LA} and \rASGD{MCIS} using the fixed cost of 1000 model evaluations.
Due to the lower cost of \SG{LA}, the optimization using this estimator is able to get closer to the optimum than using \SG{MCIS}.
However, \SG{MCIS} is able to converge even for $M=1$.
\begin{figure}[H]
\centering
\includegraphics[width=0.6\linewidth]{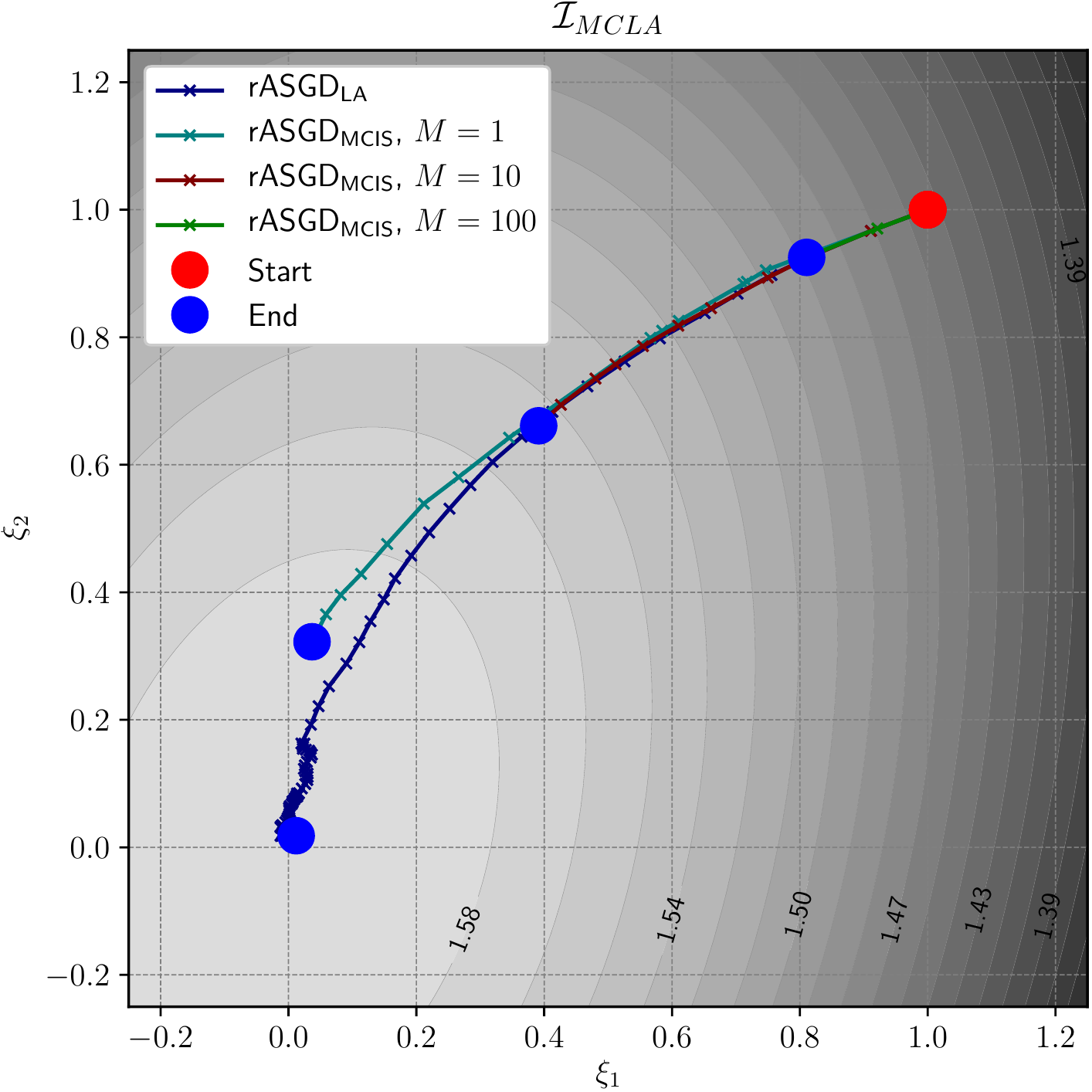}
\caption{(Example 2) Contour of the expected information gain and optimization ascent paths for \rASGD{LA} and \rASGD{MCIS}.}
\label{fig:QuadOED_IS_ASGD}
\end{figure}

Figures \ref{fig:QuadOED_IS_ASGDConv} and \ref{fig:QuadOED_IS_ASGDByCost} present the convergence history of the error in terms of $\bs{\xi}$ \emph{versus} the number of iterations and NCFM, respectively.
In Figure \ref{fig:QuadOED_IS_ASGDConv}, we see that \rASGD{LA} performed almost 175 iterations, whereas \rASGD{MCIS} with $M=1$ did not achieve 25 iterations.
Increasing the size of $M$ did not improve the convergence by much. The acceleration in convergence was not sufficient to compensate for the increase in cost.
\begin{figure}[H]
  \centering
  \includegraphics[width=.6\linewidth]{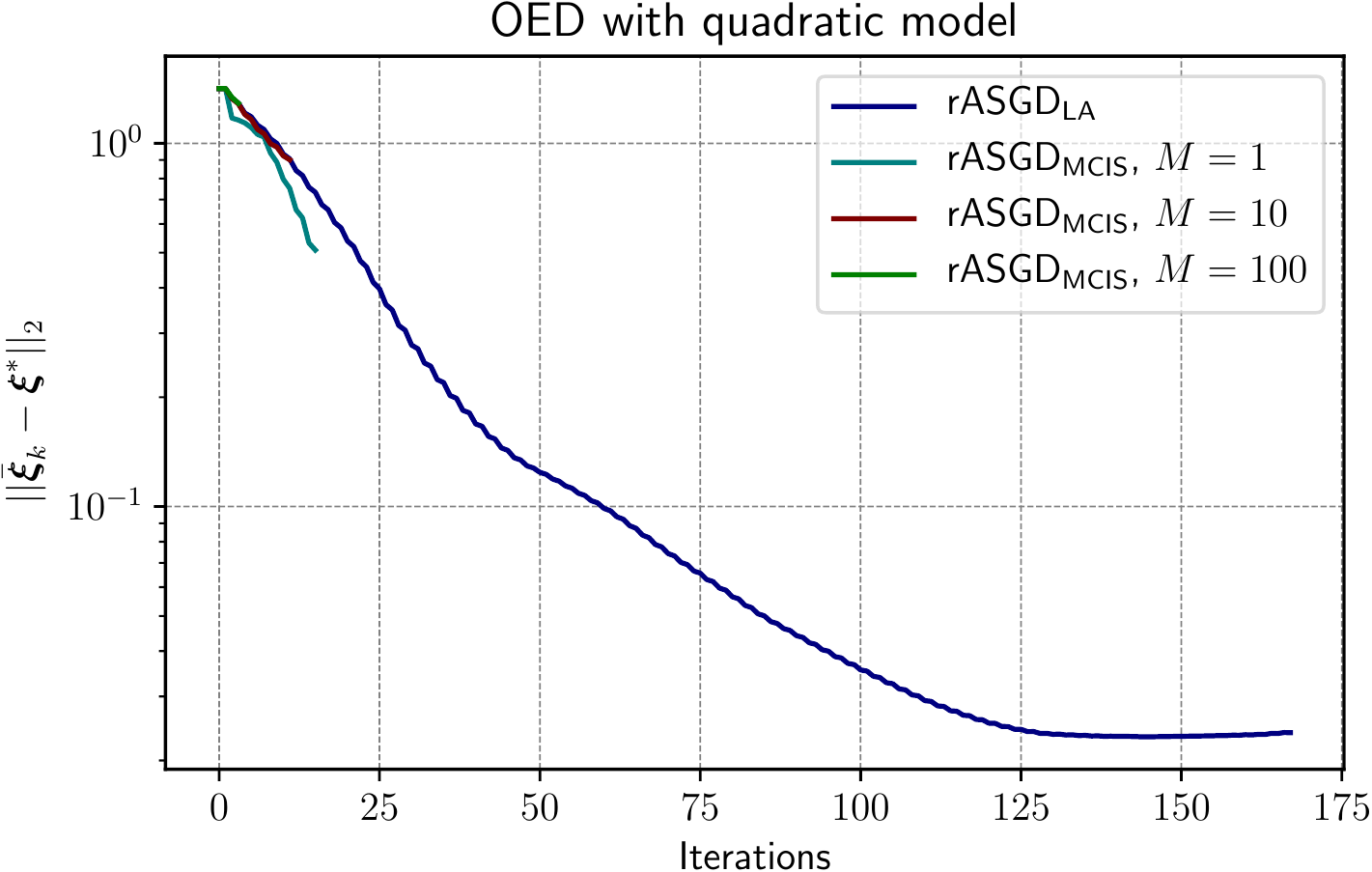}
  \caption{(Example 2) Convergence to the optimum in relation to iterations for \rASGD{LA} and \rASGD{MCIS}.}
  \label{fig:QuadOED_IS_ASGDConv}
\end{figure}
\begin{figure}[H]
  \centering
  \includegraphics[width=.6\linewidth]{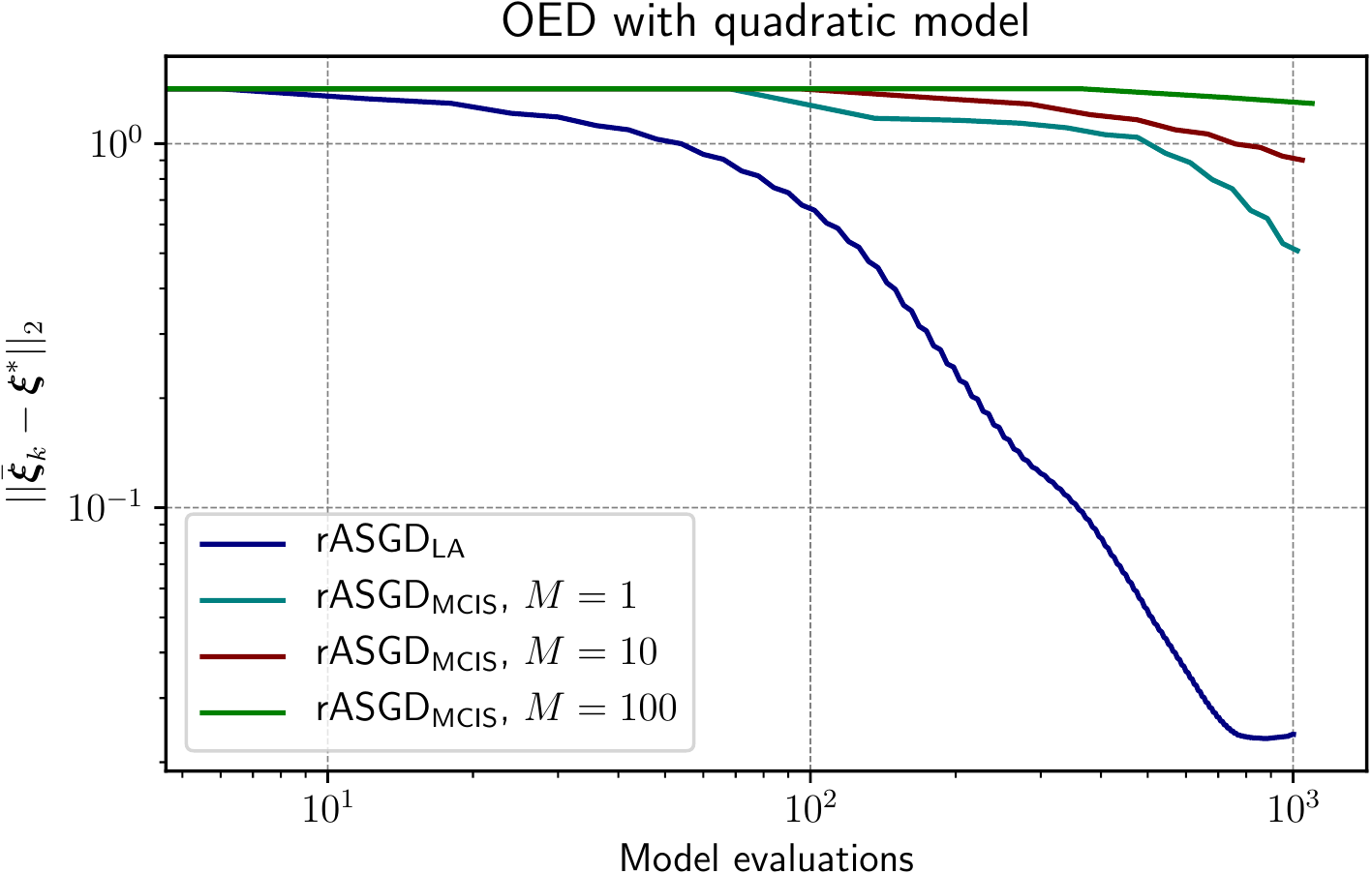}
  \caption{(Example 2) Convergence to the optimum in relation to model evaluations for \rASGD{LA} and \rASGD{MCIS}.}
  \label{fig:QuadOED_IS_ASGDByCost}
\end{figure}
As a sanity check to estimate the intrinsic bias of the Laplace approximation in the optimization carried out with the estimator \SG{LA}, we compute the expected value of the gradient using DLMCIS at the optimum found.
Using $N=10^5$ and $M=10^3$ in DLMCIS, we obtain a gradient with a norm of $10^{-6}$, which means that the bias introduced by the Laplace approximation is negligible in this case.

\subsection{Example 3: Strain gauge positioning on Timoshenko beam}\label{sec:ex3}

In this example, we look at a beam with the dimensions $10$ m length, $2$ m height, and $0.1$ m base width.
A uniform load of $1.00$ kN/mm is imposed on the beam's vertical axis and distributed along its main axis.
We characterize the beam's mechanical properties, namely the Young modulus $E$ and the shear modulus $G$, given measurements obtained from a strain gauge.
The geometry of the beam, the load, and the position of the strain gauge are illustrated in Figure 6.
\begin{figure}[H]
	\centering
	\includegraphics[width=0.4\textwidth]{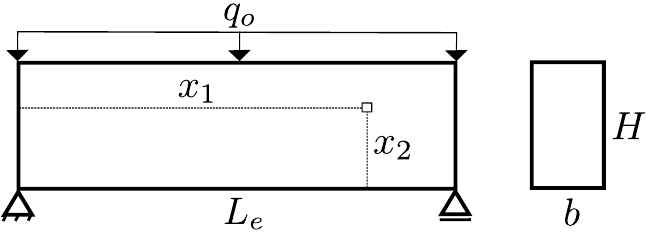}
	\caption{(Example 3) Geometry of the Timoshenko beam.}
	\label{fig:TimoGeometry}
\end{figure}
We aim to locate a strain gauge on the beam that maximizes the information on $E$ and $G$.
We model the beam following Timoshenko's theory \cite{timoshenko1921lxvi}, a mechanical model that captures the strains resulting from both normal and shear stresses.
The Timoshenko beam model is
\begin{align}\label{eq:timo}
\begin{cases}
K_s G A_r \varepsilon_{12} = \frac{q_o L_e}{2} - q_o x_1, \\
EI_n \varepsilon_{11} = \frac{q_o x_1(L_e-x_1)}{2} x_2,
\end{cases}
\end{align}
where $\varepsilon_{11}$ is the normal strain, $\varepsilon_{12}$ is the shear strain, $x_1$ and $x_2$ are the positions of the strain gauge on the horizontal and vertical axes, respectively, $q_o$ is the uniform load, $L_e$ is the length of the beam, $I_n$ is the inertia moment of the cross section, $K_s$ is the Timoshenko constant ($K_s=5/6$ in all test cases), and $A_r$ is the cross-section area.

\subsubsection{Bayesian formulation}

The optimal position for the strain gauge that provides the maximum information about E and G is denoted by $\bs{\xi}^*=(x_1^*,x_2^*)$.
The longitudinal strain on the main axis of the beam, denoted by $\varepsilon_{11}$, together with the transverse strain $\varepsilon_{12}$, composes the output of the forward model.
Therefore, based on \eqref{eq:timo}, we find that
\begin{align}
\bs{g}(\bs{\xi}, \bs{\theta})
&= \left(\varepsilon_{11}(\bs{\xi}, \bs{\theta}), \varepsilon_{12}(\bs{\xi}, \bs{\theta}) \right)\nonumber\\
&= \left(\frac{\xi_2\left(q_oL_e \xi_1 - q_o \xi_1^2 \right)}{2 \theta_1 I_n},
\frac{\frac{L_e}{2} q_o - q_o \xi_1}{K_s \theta_2 A_r}\right),
\label{eq:timo2}
\end{align}
where $(x_1,x_2)$ and $(E,G)$ are replaced by $(\xi_1,\xi_2)$ and $(\theta_1,\theta_2)$, respectively.
The additive error of the measurement is Gaussian $\bs{\epsilon} \sim \mathcal{N}(0,\bs{\Sigma}_{\epsilon})$, where the noise covariance matrix is $\bs{\Sigma}_{\epsilon} = \hbox{diag} \left\{ \sigma_{\epsilon_{1}}^2, \sigma_{\epsilon_{2}}^2 \right\}$.

\subsubsection{Test cases}

We assess the robustness of the proposed methods in four test cases, in which we
attempt to locate the optimal strain-gauge placement on a beam.
We test all the different cases, changing the variance of the prior pdf of $\bs{\theta}$, the dispersion of the measurement noise, and the number of experiments.
All four cases are tested with the \SG{LA} estimator, and the prior pdf of $\bs{\theta}$ is Gaussian with the distribution $\pi(\bs{\theta}) \sim \mathcal{N}\left((\mu_{pr}^E,\mu_{pr}^G)^T,\hbox{diag}\left\{(\sigma_{pr}^G)^2, (\sigma_{pr}^E)^2 \right\}\right)$, where $\mu_{pr}^E=30.00\text{ GPa}$ and $\mu_{pr}^G=11.54\text{ GPa}$.
Table \ref{tab:partimoshenko} presents the parameters used in each of the four cases.
\begin{table}[H]
\centering
\caption{Parameters for the Timoshenko beam problem (Example 3).}
\begin{tabular}{lccccc}
	\toprule
	Parameter &  $N_e$   &  $\sigma^E_{pr}$(GPa) &  $\sigma^G_{pr}$(GPa) & $\sigma_{\epsilon_{1}}(\times 10^{-4})$   &   $\sigma_{\epsilon_{2}}(\times 10^{-4})$   \\
	\midrule
	Case 1    &    3     &       9.00            &         3.46   &    6.25   &    1.30 \\
	Case 2    &    1     &       6.00            &         2.31   &    3.75   &    0.78 \\
	Case 3    &    1     &       6.00            &         0.46   &    3.75   &    0.78 \\
	Case 4    &    1     &       1.20            &         2.31   &    3.75   &    0.78 \\
 \bottomrule
\end{tabular}
\label{tab:partimoshenko}
\end{table}

In this section, we focus on \rASGD{LA}, and assess the bias using the expected value of the gradient of DLMCIS at the optimum.
The optimization paths for the placement of the strain gauges on the beam are drawn against contour plots of the expected information gain across the optimization domain in Figure \ref{fig:Timoshenko}.
\begin{figure}[hp]
	\centering
	\includegraphics[width=\linewidth, clip, trim={0 0 0 0.0cm}]{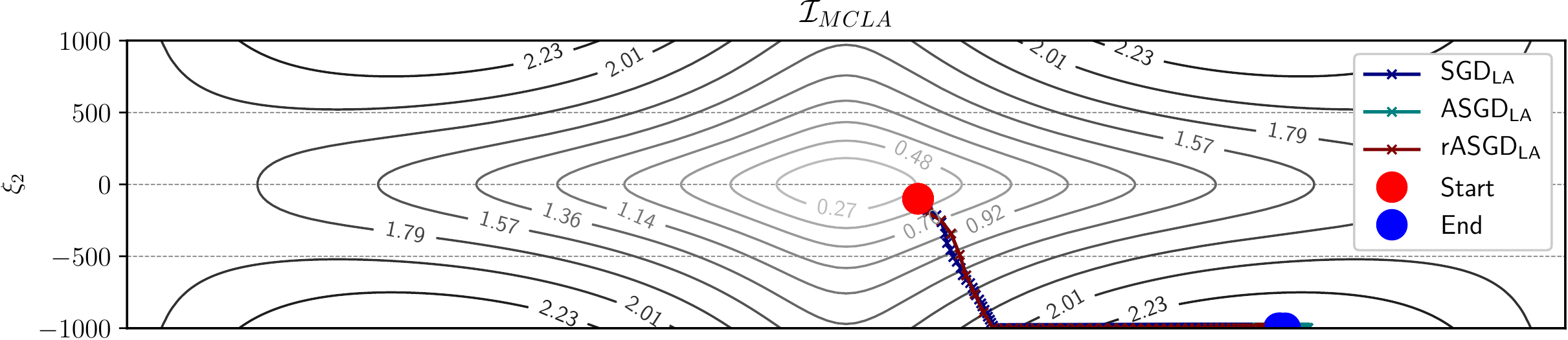}\\ \vspace{1em}
	\includegraphics[width=\linewidth]{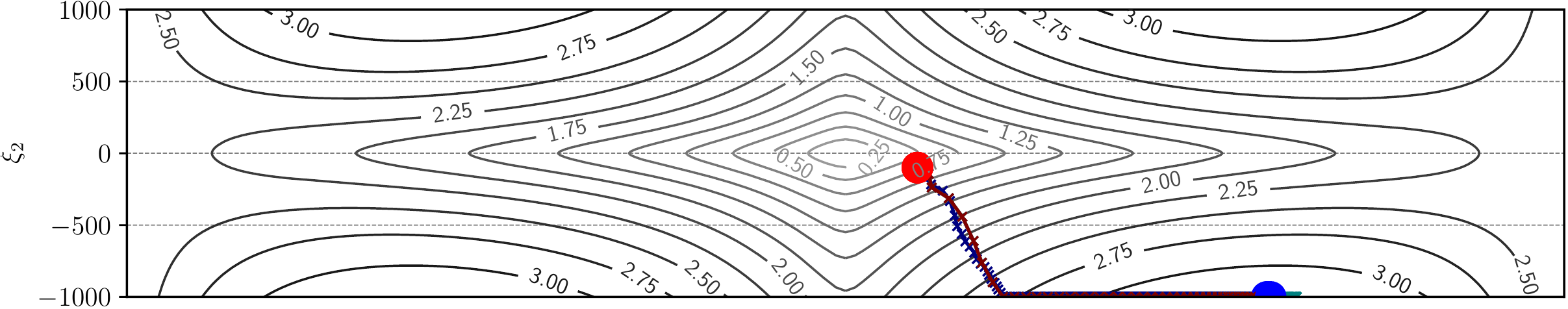}\\ \vspace{1em}
	\includegraphics[width=\linewidth]{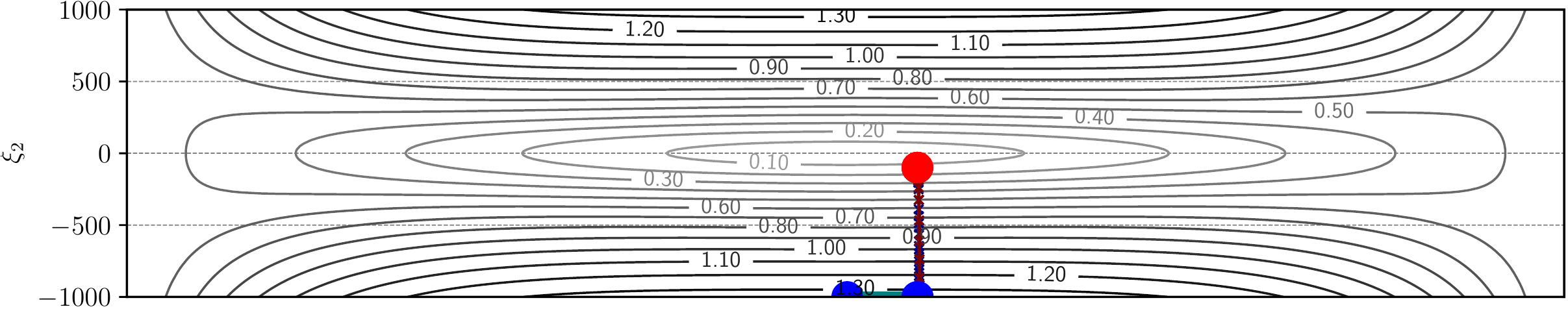}\\ \vspace{1em}
	\includegraphics[width=\linewidth]{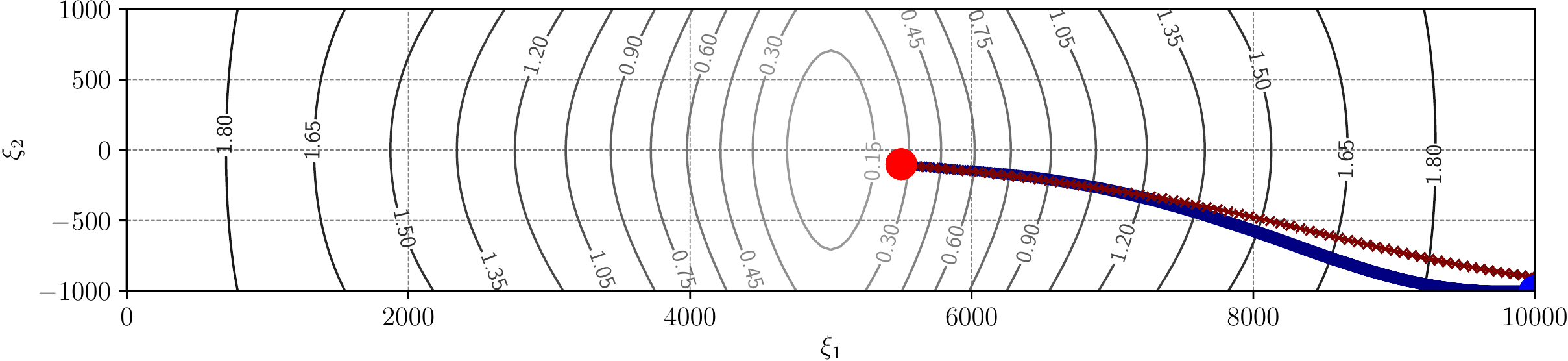}
	\caption{(Example 3) From top to bottom, cases 1 to 4  summarized in Table \ref{tab:resultstimoshenko}.
Expected information gain contours computed with MCLA and optimization ascent paths using \SGD{LA}, \ASGD{LA}, and \rASGD{LA}.}
	\label{fig:Timoshenko}
\end{figure}

In cases 1 and 2, the optima are similarly located near the bottom of the beam, between the middle and the end.
In case 3, the optimum is located in the bottom-middle of the beam; in case 4, the optimum is located on the supports.
These placements are expected, as the Young modulus depends on the bending moment (for which the maximum is at the middle of the beam ($x_1 = L/2$)), and the shear modulus depends on the shear stress (for which the maximum is at the beam supports ($x_1 = 0$ and $x_1 = L$)).
In case 3, the prior information about $G$ is more accurate; consequently, the algorithm converges to the middle of the beam where more information about $E$ can be collected.
Similarly, in case 4, the algorithm converges to the beam supports where data is more informative about $G$.

In Table \ref{tab:resultstimoshenko}, we present the initial guesses, the optimized setups, the respective expected information gains in relation to the prior, and the standard deviations of the posterior pdfs of the parameters $E$ and $G$ for the four cases.
The posteriors are evaluated at $\bs{\hat{\theta}} = (\mu_{pr}^E,\mu_{pr}^G)$ for the four cases presented in Figure \ref{fig:Timoshenkoposterior}.
We observe a reduced variance in the optimized experiment compared to the original, reflecting the importance of an informative experiment.
In cases 3 and 4, no information is acquired about $G$ and $E$, respectively, since the variances in the axes are not reduced compared to the prior.
\begin{table}[H]
 \centering
 \caption{Results from the Timoshenko beam problem (Example 3).}
 \begin{tabular}{lcccccc}
  \toprule
                          &          & $x_1^*$(mm) & $x_2^*$(mm) & $\I{MCLA}$ & $\sigma^E_{post}$ (GPa) & $\sigma^G_{post}$ (GPa) \\
  \cmidrule{3-7}
  \multirow{2}{*}{Case 1} & Non-Opt. & 5500.00     & -100        & 0.14               & 8.00                    & 2.40                    \\
                          & Opt.     & 8022.59     & -1000.00    & 2.43               & 2.48                    & 0.54                    \\ \midrule
  \multirow{2}{*}{Case 2} & Non-Opt. & 5500.00     & -100        & 0.23               & 2.38                    & 1.38                    \\
                          & Opt.     & 7962.77     & -1000.00    & 3.35               & 1.60                    & 0.74                    \\ \bottomrule
  \multirow{2}{*}{Case 3} & Non-Opt. & 5500.00     & -100        & 0.06               & 5.70                    & 0.46                    \\
                          & Opt.     & 5004.47     & -1000.00    & 1.28               & 1.72                    & 0.46                    \\ \bottomrule
  \multirow{2}{*}{Case 4} & Non-Opt. & 5500.00     & -100        & 0.22               & 1.20                    & 1.93                    \\
                          & Opt.     & 10000.00    & -1000.00    & 1.94               & 1.20                    & 0.33                    \\ \bottomrule
 \end{tabular}
 \label{tab:resultstimoshenko}
\end{table}
Because we use the biased and inconsistent \SG{LA} estimator of the gradient, as a sanity check, we evaluate the gradient at the optima we found (the first two cases), using the full gradient of the DLMCIS estimator with $N=10^3$ and $M=10^2$.
In both cases, the gradient norm is below $10^{-3}$, meaning that the bias of the Laplace approximation is considerably small at the optima.
We conclude that the biased optima are not significantly distant to the real optima.
To plot the convergence, we estimate the real optima using DLMCIS.
The convergences from the first two cases are presented in Figure \ref{fig:timoshenko1conv}.

\begin{figure}[h]
	\centering
	\includegraphics[width=0.48\linewidth]{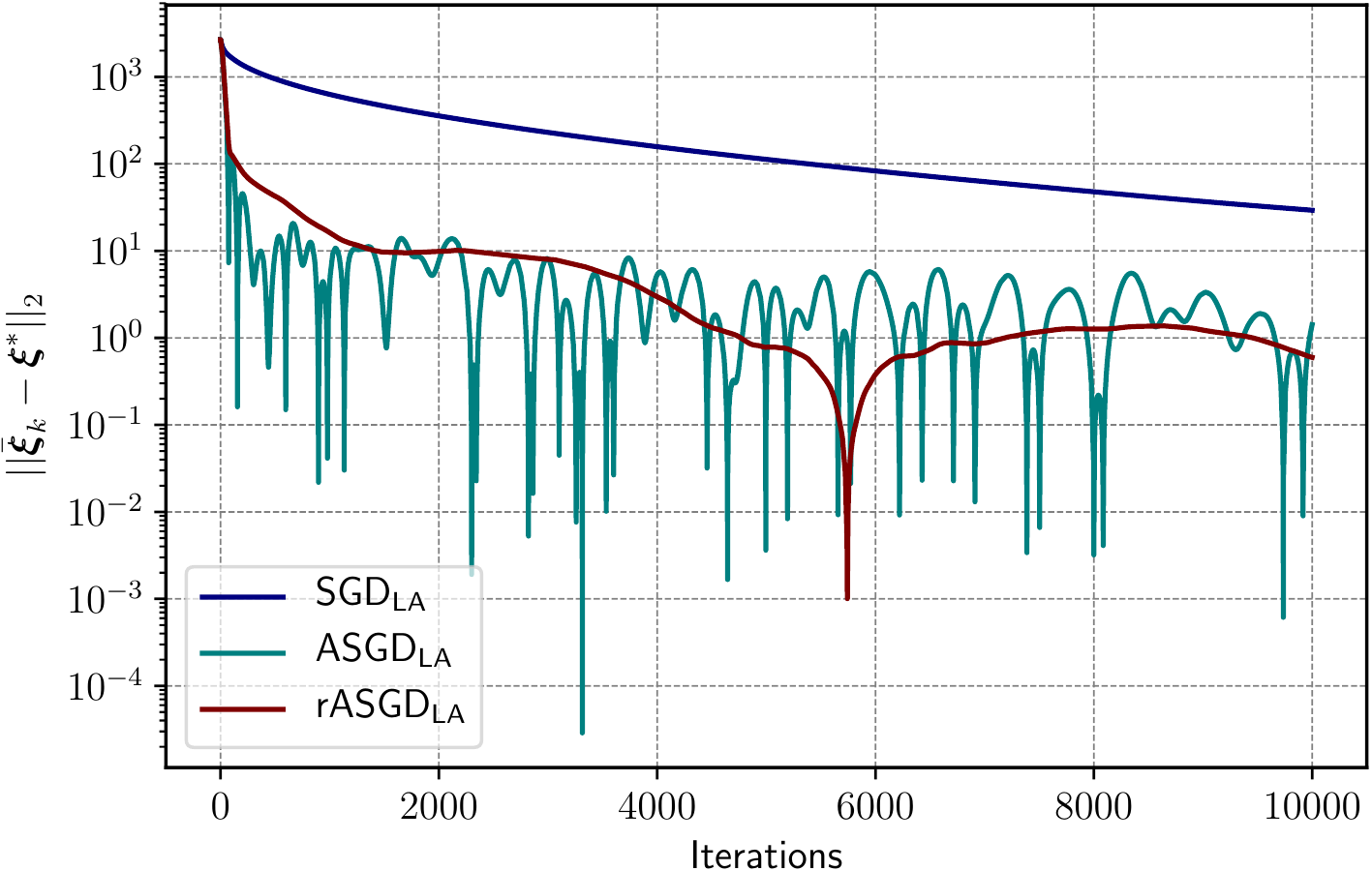}
	\includegraphics[width=0.48\linewidth]{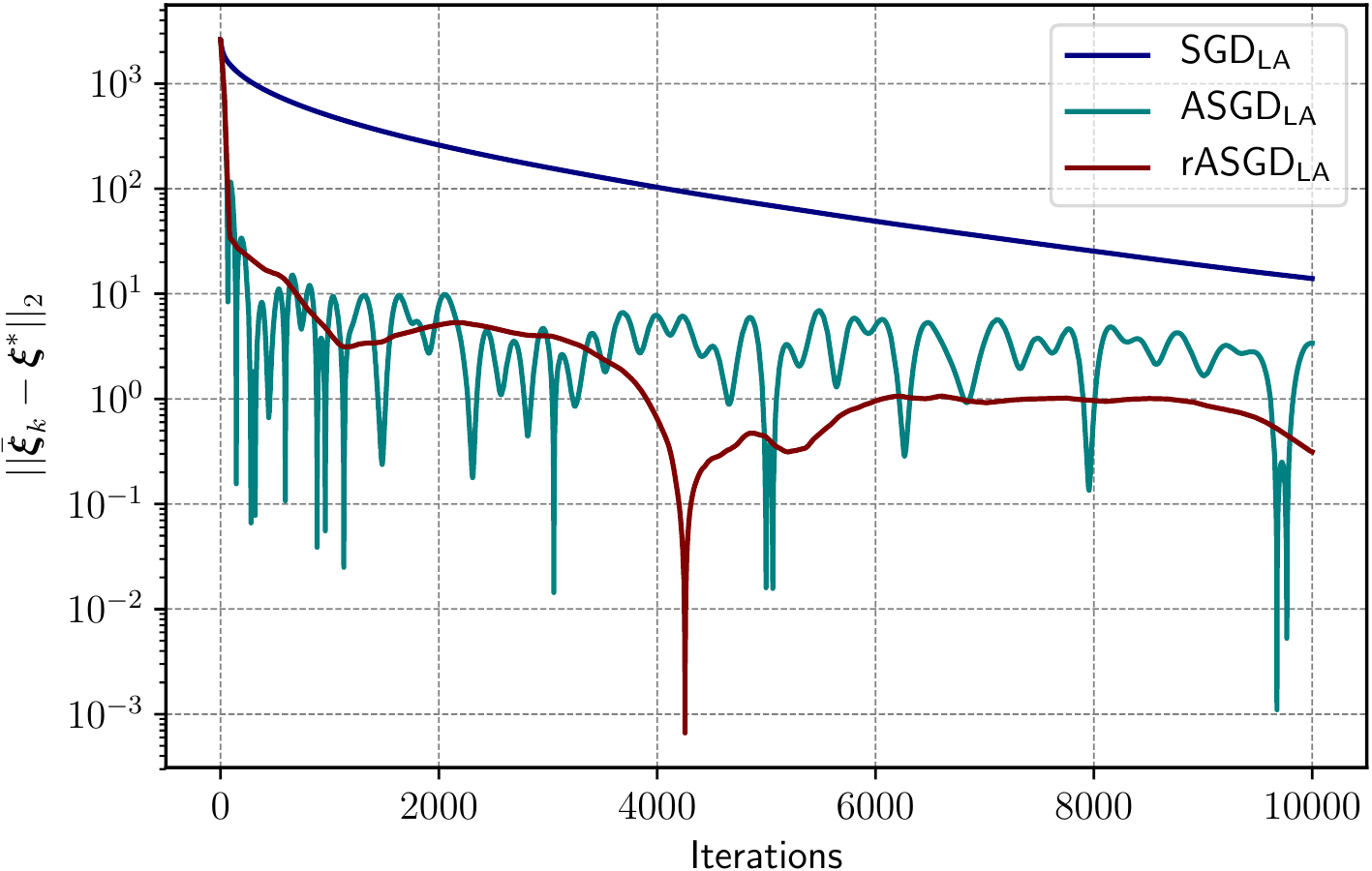}
	\caption{(Example 3) Convergences from cases 1 (left) and 2 (right) (\rASGD{LA}: error 1 mm, or relative error of $10^{-4}$).}
	\label{fig:timoshenko1conv}
\end{figure}

\begin{figure}[hp]
	\centering
	\includegraphics[width=0.48\textwidth]{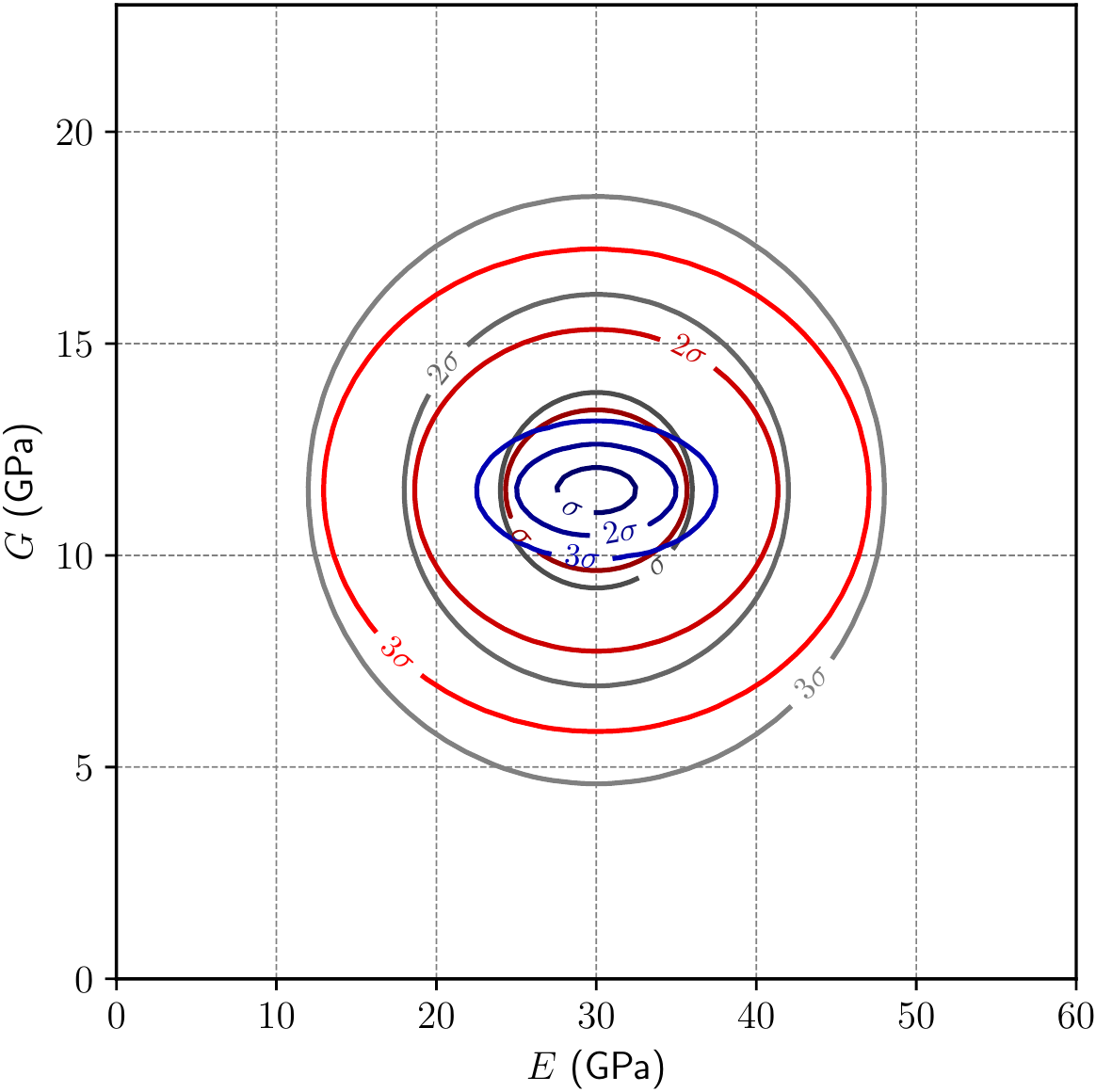}
	\includegraphics[width=0.48\textwidth]{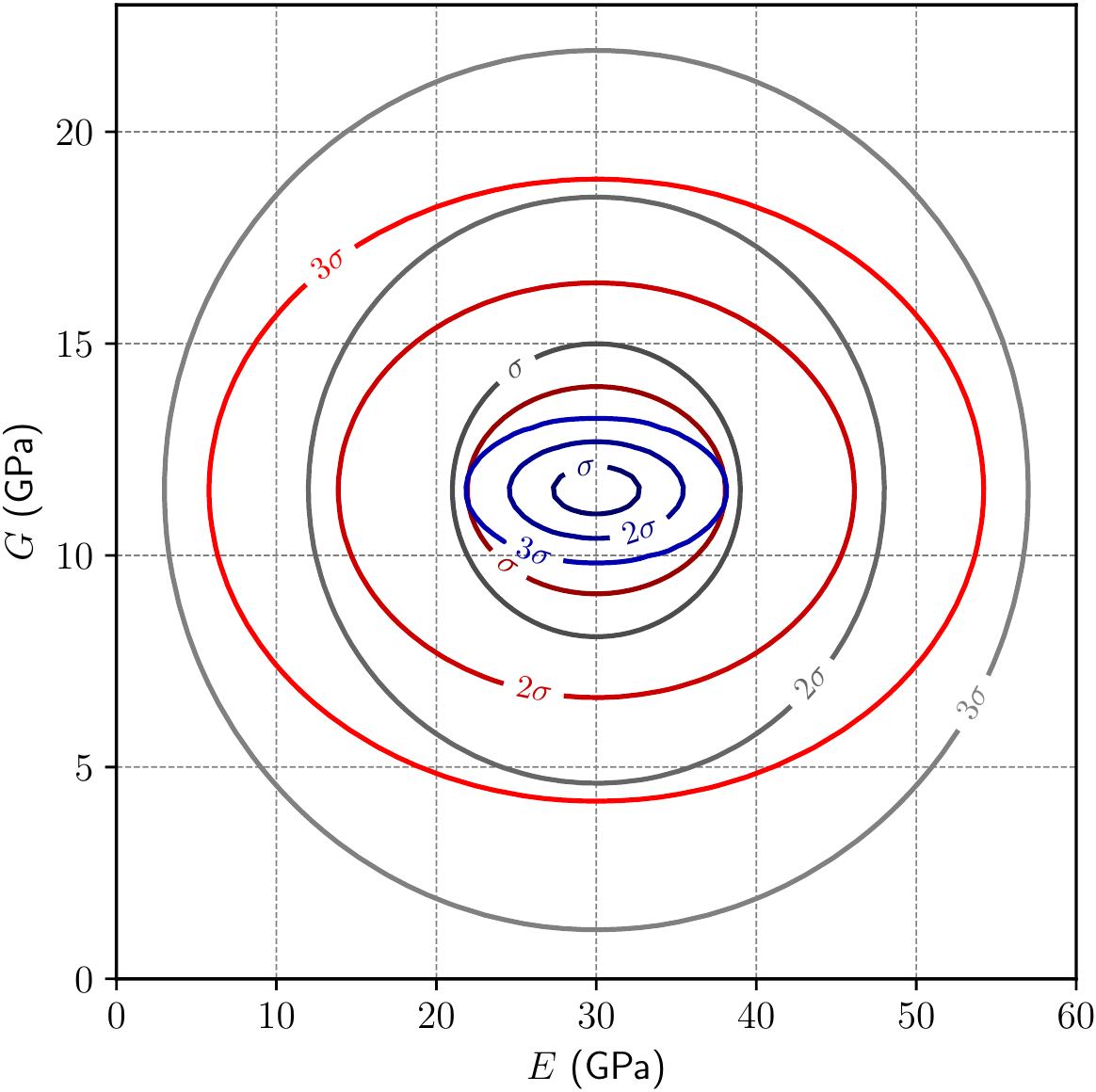}\\ \vspace{1em}
	\includegraphics[width=0.48\textwidth]{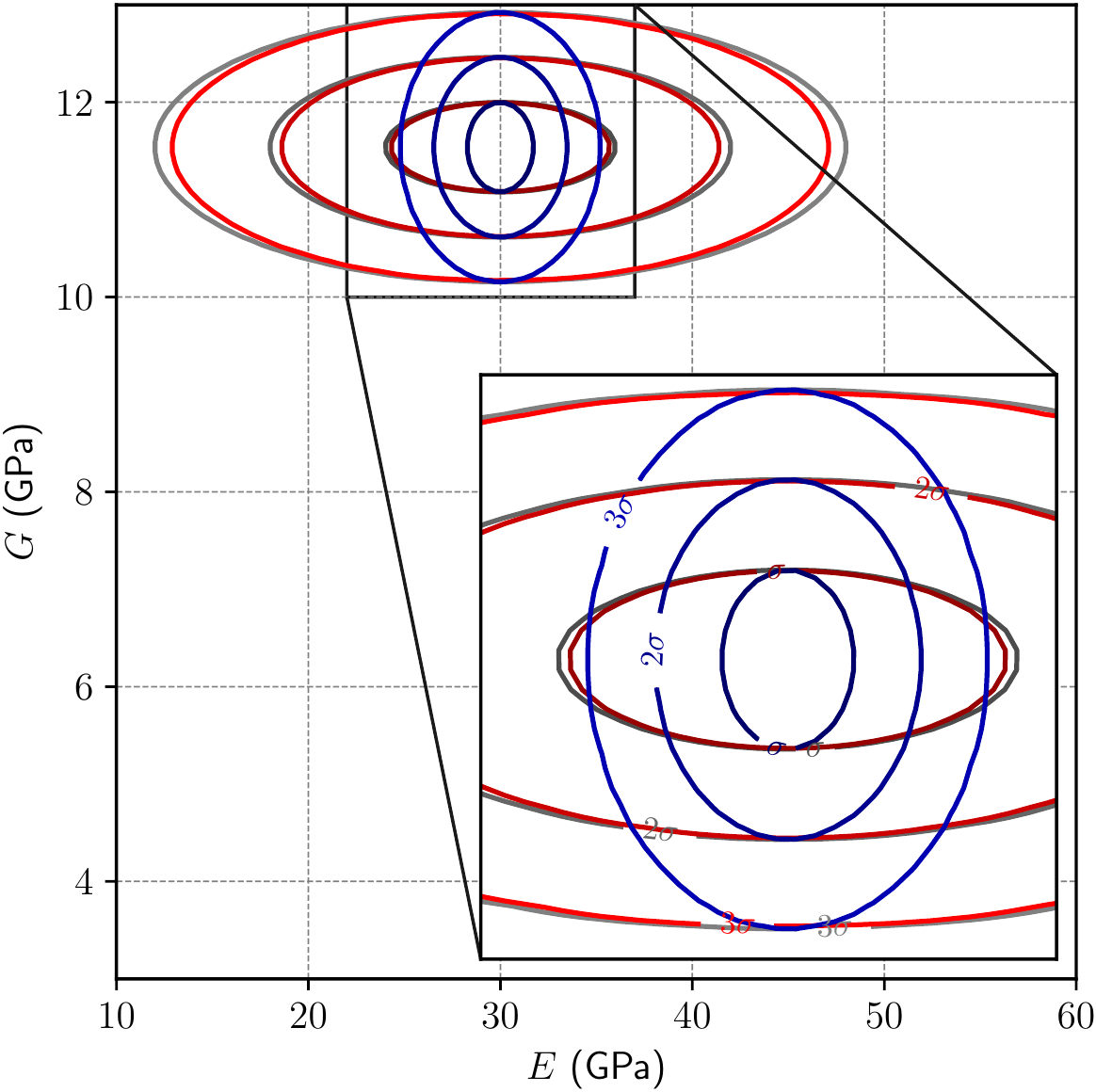}
	\includegraphics[width=0.48\textwidth]{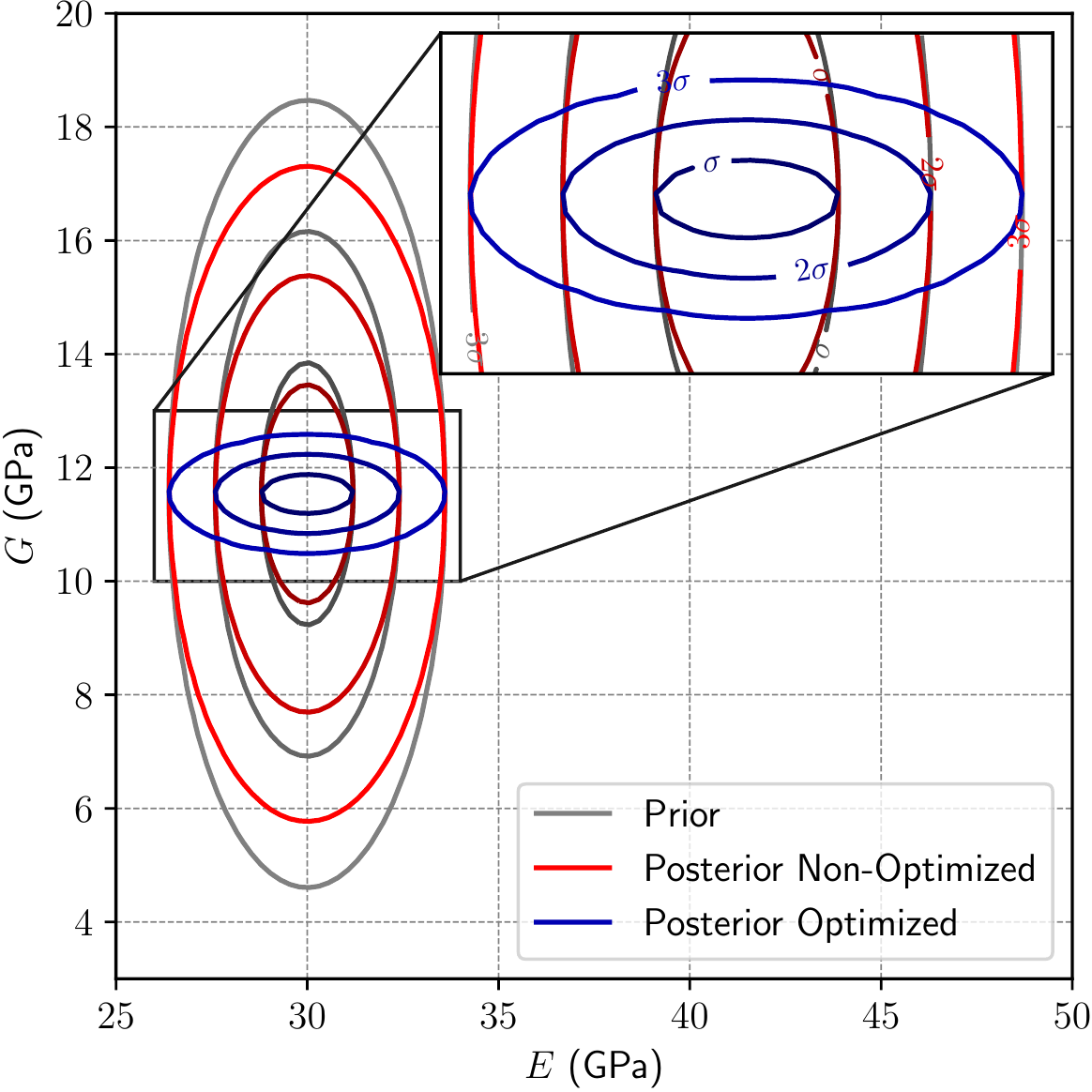}
	\caption{(Example 3) Prior, posterior, and optimized posterior pdfs for the Young modulus $E$ and the shear modulus $G$ for cases 1 (top-left), 2 (top-right), 3 (bottom-left), and 4 (bottom-right).}
	\label{fig:Timoshenkoposterior}
\end{figure}

\subsection{Example 4: Electrical impedance tomography}\label{sec:ex4}

EIT is an imaging technique that infers the conductivity of a closed body from potential measurements obtained from electrodes placed on the boundary surface of the body.
Here, we consider the optimal design of an EIT experiment conducted on two orthotropic plies, in which the potential field is assumed to be quasi-static.
The physical phenomenon is governed by a second-order partial differential equation combined with the complete electrode boundary model \cite{somersalo}.
Beck et al. \cite{Beck2017fast} prove that the bias of the Laplace approximation for this problem is negligible.
Therefore, in this example, we use \rASGD{LA}.

\subsubsection{Bayesian setting}

We consider a body $D$ that is $20$ cm long and composed of two plies that are each 1 cm thick, resulting in a total thickness of  2 cm.
Both plies are made of the same material, but are oriented at different angles.
The conductivity of each ply is $\bs{\bar{\sigma}}(\bs{\theta},\bs{x}) = \bs{Q}(\theta_k) \cdot \bs{\sigma} \cdot \bs{Q}(\theta_k)$, where $\bs{\sigma} = \hbox{diag}\left\{10^{-2}, 10^{-3}, 10^{-3} \right\}$, and $\bs{Q}(\theta_k)$ is
an orthogonal matrix depending on the unknown orientation angle $\theta_k$ that governs the rotation of ply $k$, counting from bottom to top.
The objective is to infer $\theta_1$ and $\theta_2$, about which we assume the prior information to be $\pi(\theta_1) \sim \mathcal{U}(\frac{\pi}{4.5}, \frac{\pi}{3.5})$ and $\pi(\theta_2) \sim \mathcal{U}(-\frac{\pi}{3.5}, -\frac{\pi}{4.5})$.
During the EIT experiment, low-frequency electrical currents are injected through the electrodes $E_l$ (with $l=1, \cdots, N_{el}$) attached to the boundary of the body, with $N_{el}$ being the number of electrodes.
The potentials at the electrodes are calculated as
\begin{equation}
\bs{y}_i(\bs{\xi}) = \bs{g}_h(\bs{\xi},\bs{\theta}_t) + \bs{\epsilon}_i \myeq \bs{U}_h(\bs{\xi},\bs{\theta}_t) +\bs{\epsilon}_i, \quad for \quad i = 1, \cdots, N_e \,,
\end{equation}
where $\bs{y}_i \in \mathbb{R}^{N_{el}-1}$, $\bs{\theta}_t=(\theta_{t,1},\theta_{t,2})$ are the true orientation angles that we intend to infer.
In the Bubnov--Galerkin sense, $\bs{U}_h = (U_1, \cdots, U_{N_{el}-1})$ is the finite elements approximation (i.e., the potential at the electrodes) of $\bs{U}$ from the following variational problem: find $\left(u,\bs{U}\right) \in  L^2_{\mathbb{P}} \left(\Theta;\mathcal{H} \right)$ such that
\begin{equation}
\mathbb{E} \left[B\left((u,\bs{U}),(v,\bs{V})\right) \right] = \bs{I}_e \cdot \mathbb{E} \left[ \bs{U} \right], \;\;\;\;\hbox{for all}\;\;\;(v,\bs{V}) \in L^2_{\mathbb{P}} \left(\Theta;\mathcal{H} \right),
\end{equation}
where $\bs{I}_e$ represents the values of injected current at $N_{el}-1$ electrodes, $\bs{I}_e = \left(I_{e_1}, \cdots, I_{e_{N_{el}-1}}\right)^T$.
Let the constitutive relation for the current flux be $\bs{\jmath}(\bs{\theta}, \bs{x}) = \bs{\bar{\sigma}}(\bs{\theta},\bs{x}) \cdot \nabla u(\bs{\theta},\bs{x})$.
Then, the bilinear form $B:\mathcal{H}\times\mathcal{H} \rightarrow \mathbb{R}$ is
\begin{equation}
B\left((u,\bs{U}),(v,\bs{V})\right) = \int_D \bs{\jmath} \cdot \nabla v d D + \sum_{l=1}^{N_{el}} \frac{1}{z_l} \int_{E_l} \left(U_l -u \right) \left(V_l - v \right) \text{d} E_l,
\end{equation}
where $z_l$ is the surface contact impedance between the electrode $l$ and the surface of the body.
The space of the solution for the potential field $(u(\theta),\bs{U}(\theta))$ is  $\mathcal{H} \myeq H^1(D) \times \mathbb{R}^{N_{el}}_{\text{free}}$ for a given random event $\theta \in \Theta$, where $H^1$ is the Sobolev space of functions that belong to $L^2$, and whose first-order partial derivatives also belong to $L^2$.
Then, $L^2_{\mathbb{P}} \left(\Theta;\mathcal{H} \right)$ is  the Bochner space given by
\begin{equation}
L^2_{\mathbb{P}} \left(\Theta;\mathcal{H} \right) \myeq \left\{ (u,\bs{U}): \Theta \rightarrow \mathcal{H} \;\;\;\hbox{s.t.}\;\; \int_{\Theta} \left\|(u(\bs{\theta}),\bs{U}(\bs{\theta}))\right\|^2_{\mathcal{H}} d \mathbb{P}(\bs{\theta}) < \infty \right\}.
\end{equation}
The measurement-error distribution is $\bs{\epsilon} \sim \mathcal{N}(0,100.0)$, i.e., the standard deviation of the noise is around $5\%$ of the magnitude of measured potential.
We note that, by imposing the Kirchhoff law on $\bs{I}_e$ and the zero-potential law on $\bs{U}_h$, the model output $\bs{g}$ is projected to a suitable space for the optimization.

The optimization parameters are defined as the current intensity to be injected through the electrodes, i.e., $\bs{\xi} = \left(\{I_{e}\}_{i=1}^{N_{el}}\right)$, where each $I_{e}$ is the normalized current intensity applied to the $i^{th}$ electrode such that $I_{e} \in [-1,1]$.
A schematic of the experimental setup showing the laminated material with four electrodes is depicted in Figure \ref{fig:EITsetup}., which shows the composite material with four electrodes.
\begin{figure}[H]
\centering
\includegraphics[width=1\linewidth]{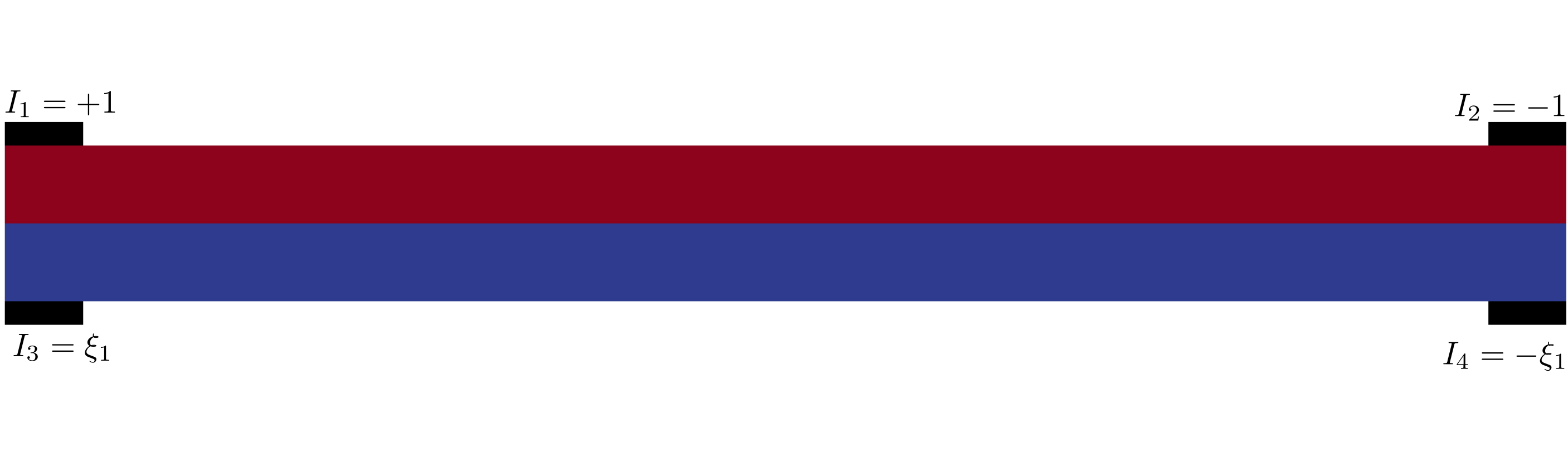}
\caption{(Example 4) Experimental configuration for EIT with two plies and four electrodes.}
\label{fig:EITsetup}
\end{figure}

\subsubsection{Numerical tests for EIT}

To evaluate the efficiency of \rASGD{LA} in solving the EIT problem, we solve four different cases using different numbers of electrodes of different lengths and positions.
In all cases, the number of experiments is $N_e=1$.
To generate the plots with the posteriors pdfs, the MAP value is approximated by the mean of the prior, i.e., $\bs{\hat{\theta}} = (\frac{\pi}{3.9375},-\frac{\pi}{3.9375})$.

\paragraph{Test case 1 (Configuration with four electrodes and one variable)} We aim to find the most informative current intensity to inject through three out of the four electrodes attached to the two-ply composite material described above and shown in Figure \ref{fig:EITsetup}.
The current at the fourth electrode is defined by Kirchhoff's law.
The electrodes are 1 cm long and have fixed positions.

We approximate the covariance of the posterior pdf for each $\bs{\xi}$ by $\bs{\Sigma}_{post}(\bs{\xi})$, as presented in \eqref{eq:sigmahat}, using the mean of the prior to approximate the MAP value.
Thus, the approximated covariances at the initial guess and the optimum solution are
\begin{equation}
\bs{\Sigma}_{post}(\bs{\xi}_0)=
\begin{bmatrix}
7.21 \times 10^{-3} & 9.73 \times 10^{-4} \\
9.73 \times 10^{-4} & 1.35 \times 10^{-4}
\end{bmatrix}, \quad
\bs{\Sigma}_{post}(\bs{\xi}^*)=
\begin{bmatrix}
5.39 \times 10^{-6} & 3.21 \times 10^{-6} \\
3.21 \times 10^{-6} & 3.39 \times 10^{-6}
\end{bmatrix}.
\label{eq:coveit}
\end{equation}
The optimization reduces the terms in the covariance matrices by two orders of magnitude, meaning that the optimized experiment provides preciser estimates of the quantities of interest.
Due to the symmetry of the problem, there are two local maxima, one with $\xi_1=-1$ and one with $\xi_1=1$.
However, the local maximum where $\xi_1=1$ is also the global maximum, with a larger expected information gain.
Therefore, we conclude that we can obtain more information about the angles of the plies from the optimized configuration than from the non-optimized configuration.

In Figure \ref{fig:EIT4ePlot}, we present the electric potential and the current streamlines both before and after the optimization.
We also present the expected information gain when using the MCLA estimator with the optimization path and the pdfs of the prior and the posteriors.
The initial guess provides less information about $\theta_1$ than about $\theta_2$.
However, the optimized position significantly reduces the variance of the $\theta_1$ estimation and provides insightful information on both parameters $\theta_1$ and $\theta_2$ with almost the same uncertainty.
\begin{figure}[H]
\centering\includegraphics[width=1\linewidth]{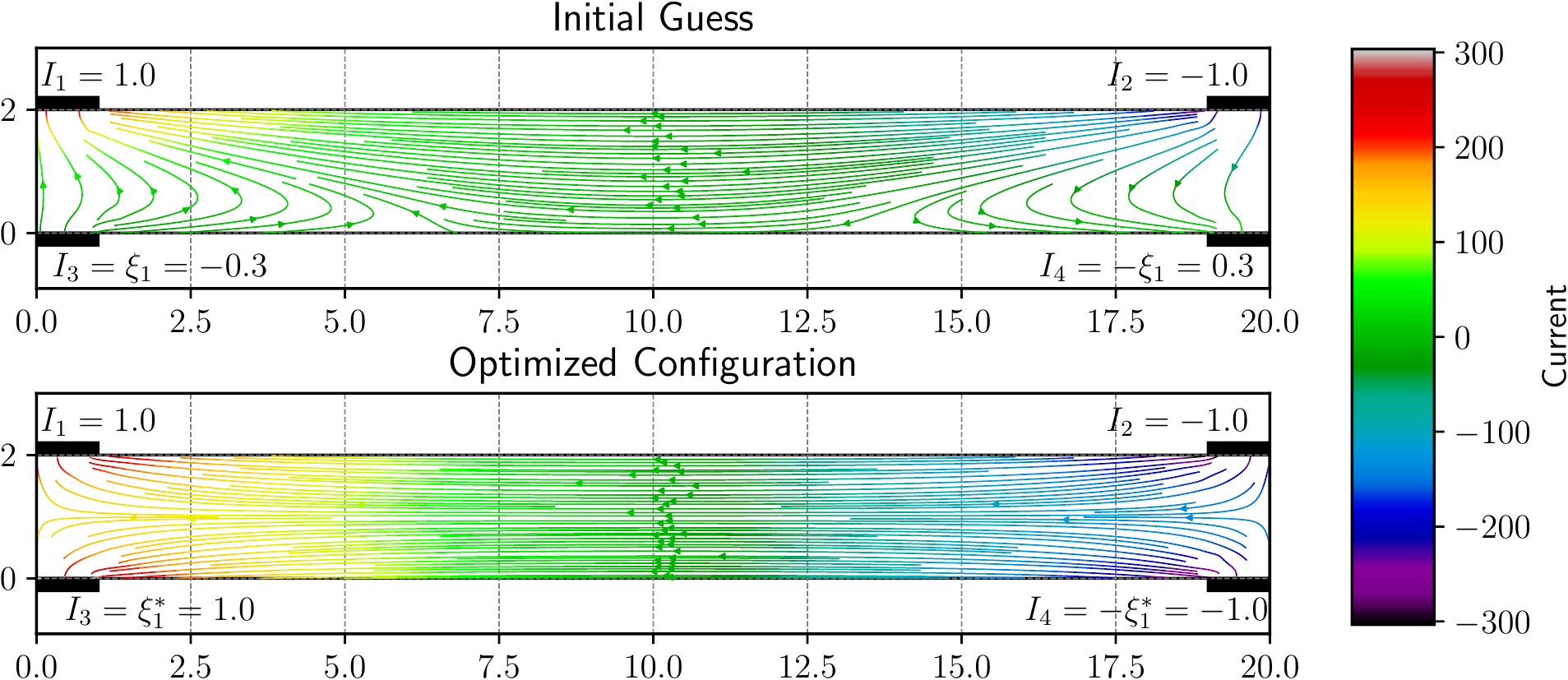}\\ \vspace{1em}
	\includegraphics[width=0.45\linewidth]{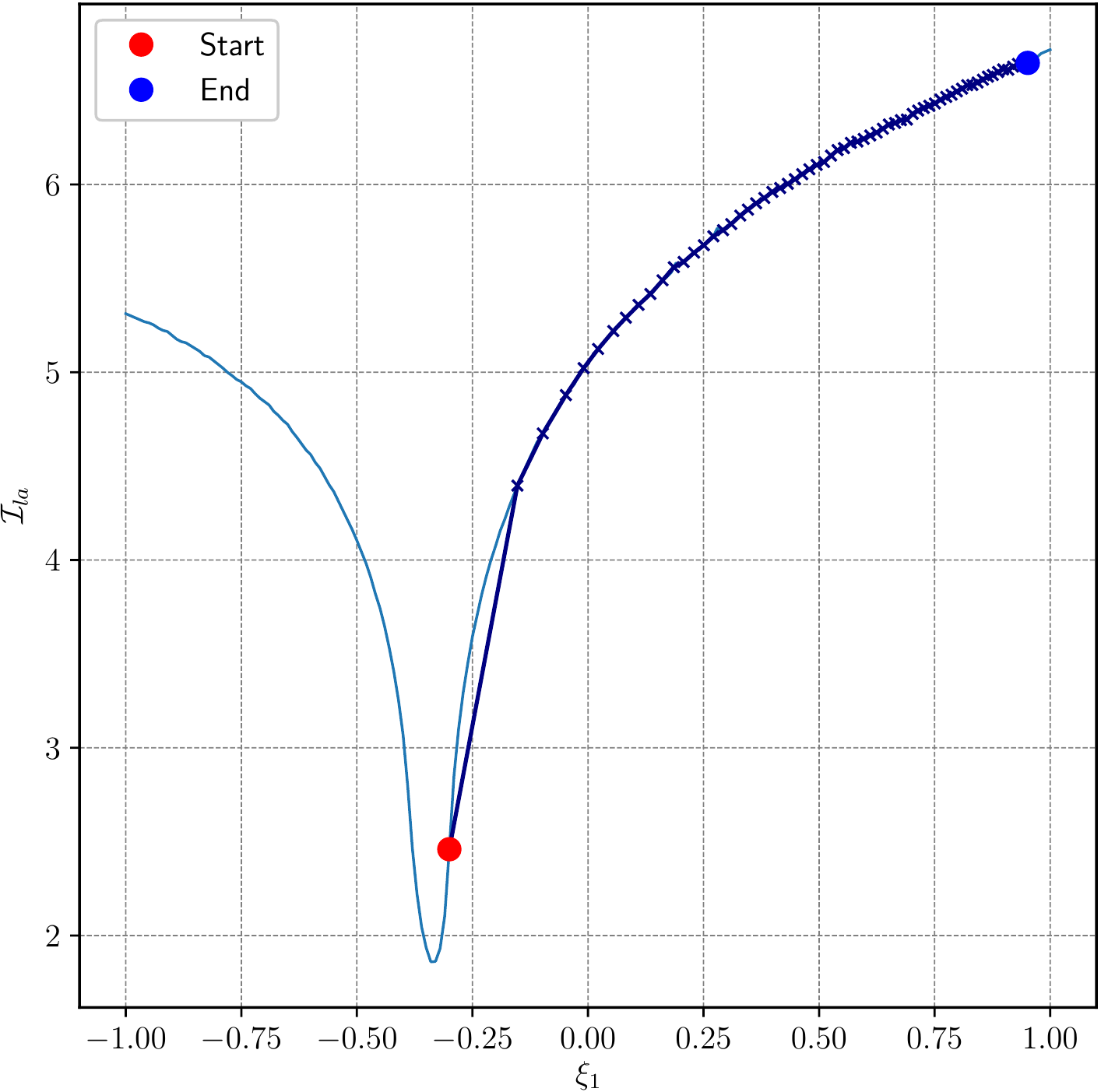}
	\includegraphics[width=0.48\linewidth]{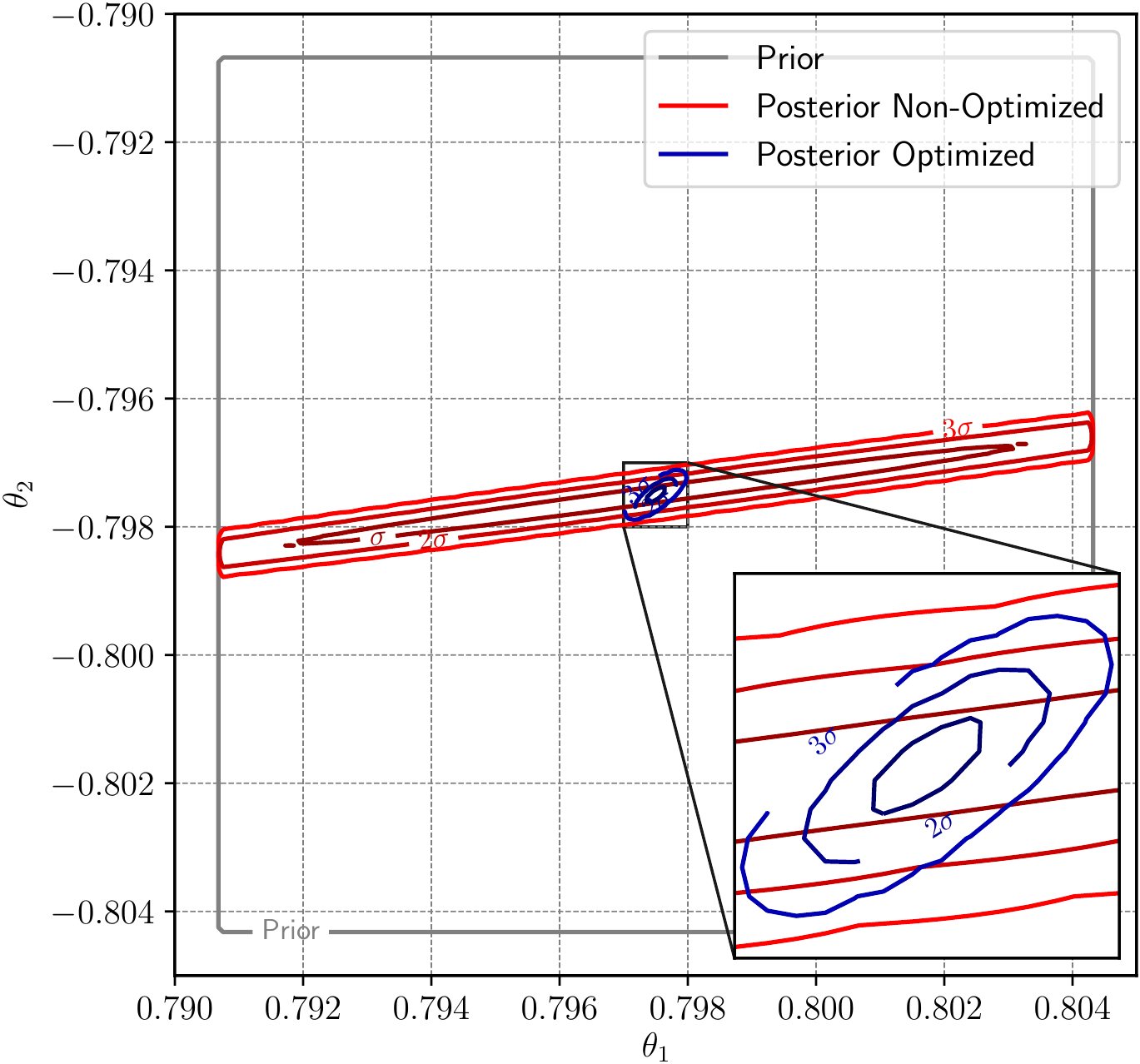}
	\caption{(Example 4, test case 1) Current streamlines, optimization path, and pdfs of both the initial and optimized configurations.}
	\label{fig:EIT4ePlot}
\end{figure}

\paragraph{Test case 2 (Configuration with three electrodes and two variables)} Here, we consider a configuration for the EIT experiment with two electrodes on the top of the two-ply composite body and one at the bottom, each $4$ cm long.
We allow the current applied to the two top electrodes to vary from $-1$ to $1$, i.e., the optimization variables are $\bs{\xi}=(I_1, I_2)$.
To impose Kirchhoff's law, the current on the third electrode (on the bottom) is set as the negative sum of the two electrodes at the top.
A constraint is imposed on $\bs{\xi}$ to guarantee that $I_3$ is between $-1$ and $1$.
To test the global convergence properties of the optimization methods, we perform optimization from two different initial guesses.
Figure \ref{fig:EIT3e1} presents the current streamlines for one of the initial guesses, $\bs{\xi} = (0.8, -0.4)$ and the posteriors from both guesses.
\begin{figure}[H]
\centering
	\includegraphics[width=0.95\linewidth]{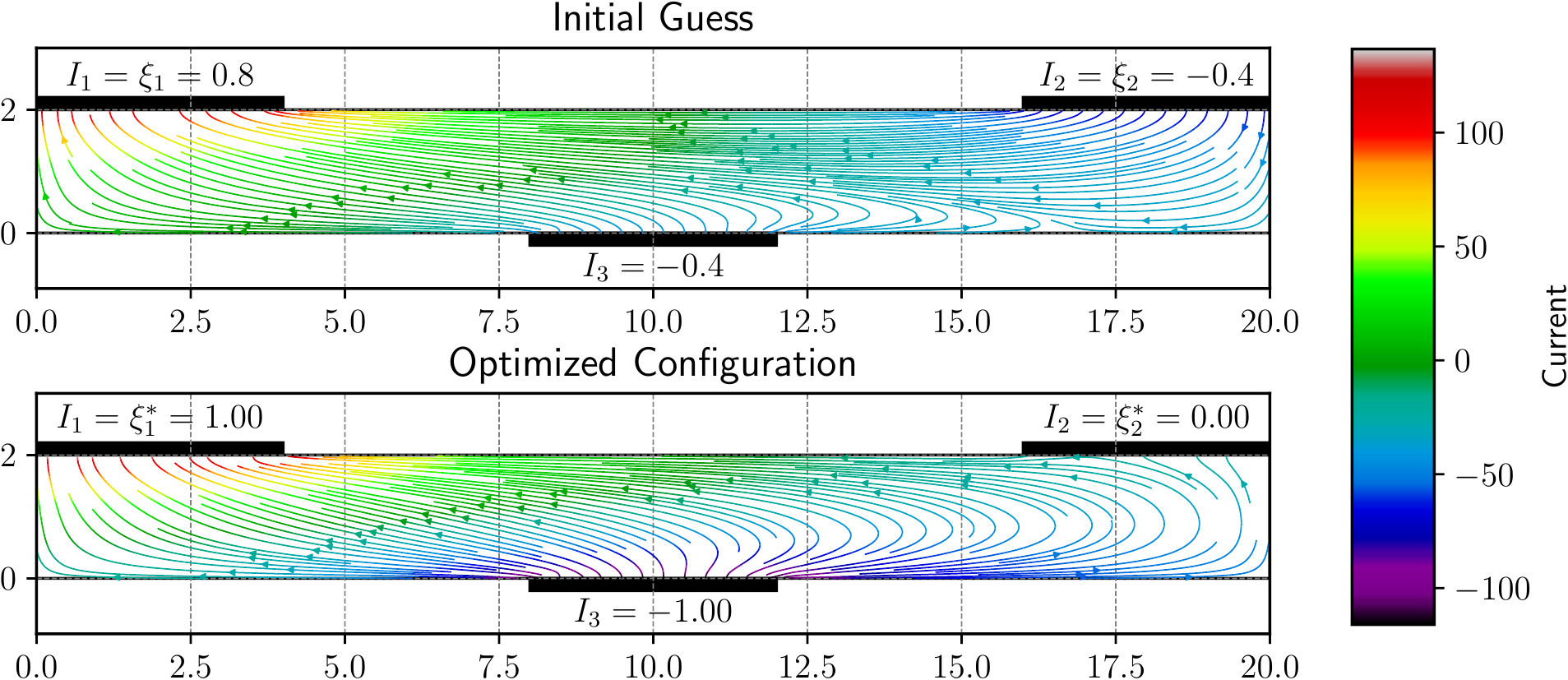}\\ \vspace{1em}
	\includegraphics[width=0.45\linewidth]{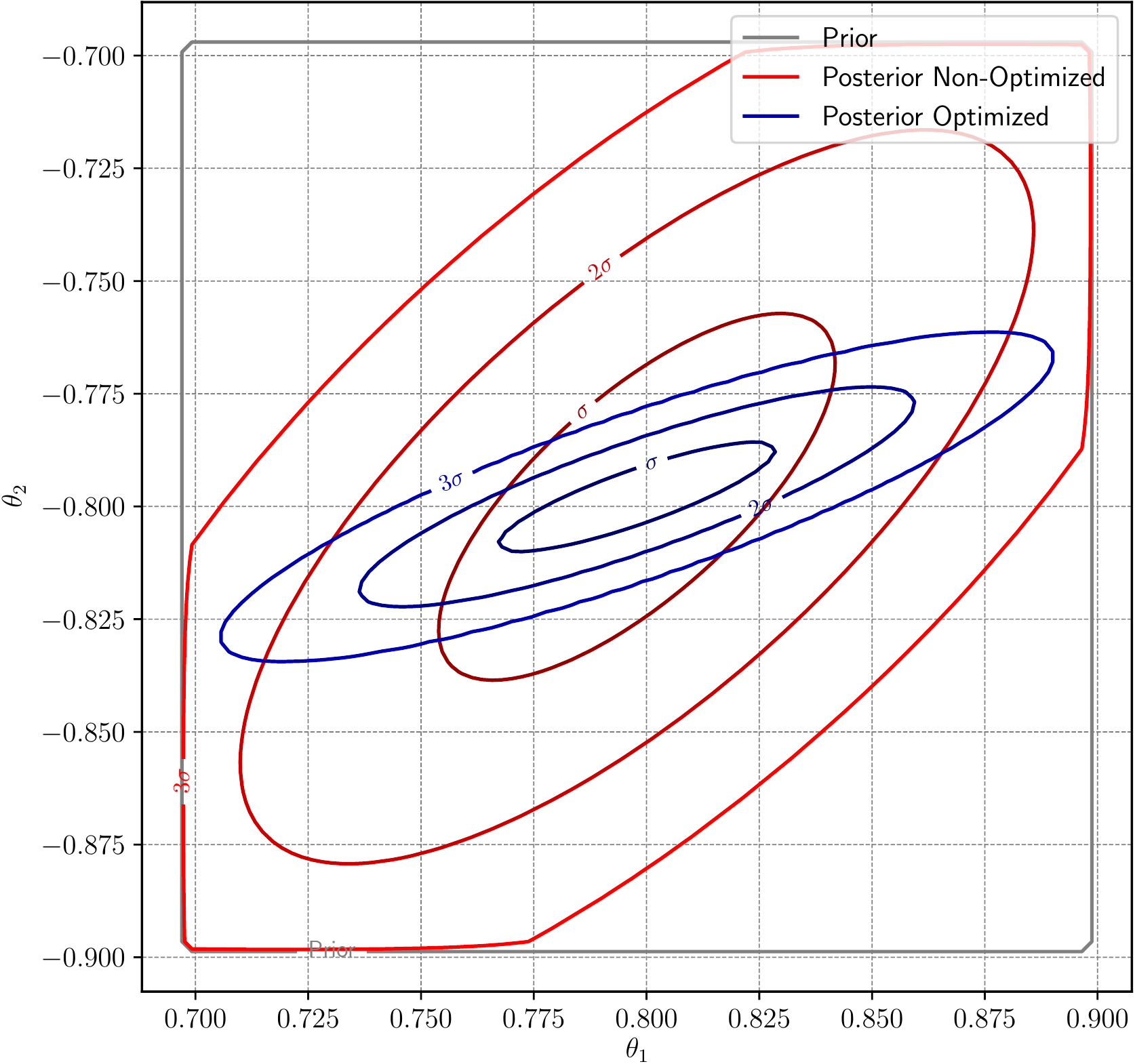}
	\includegraphics[width=0.45\linewidth]{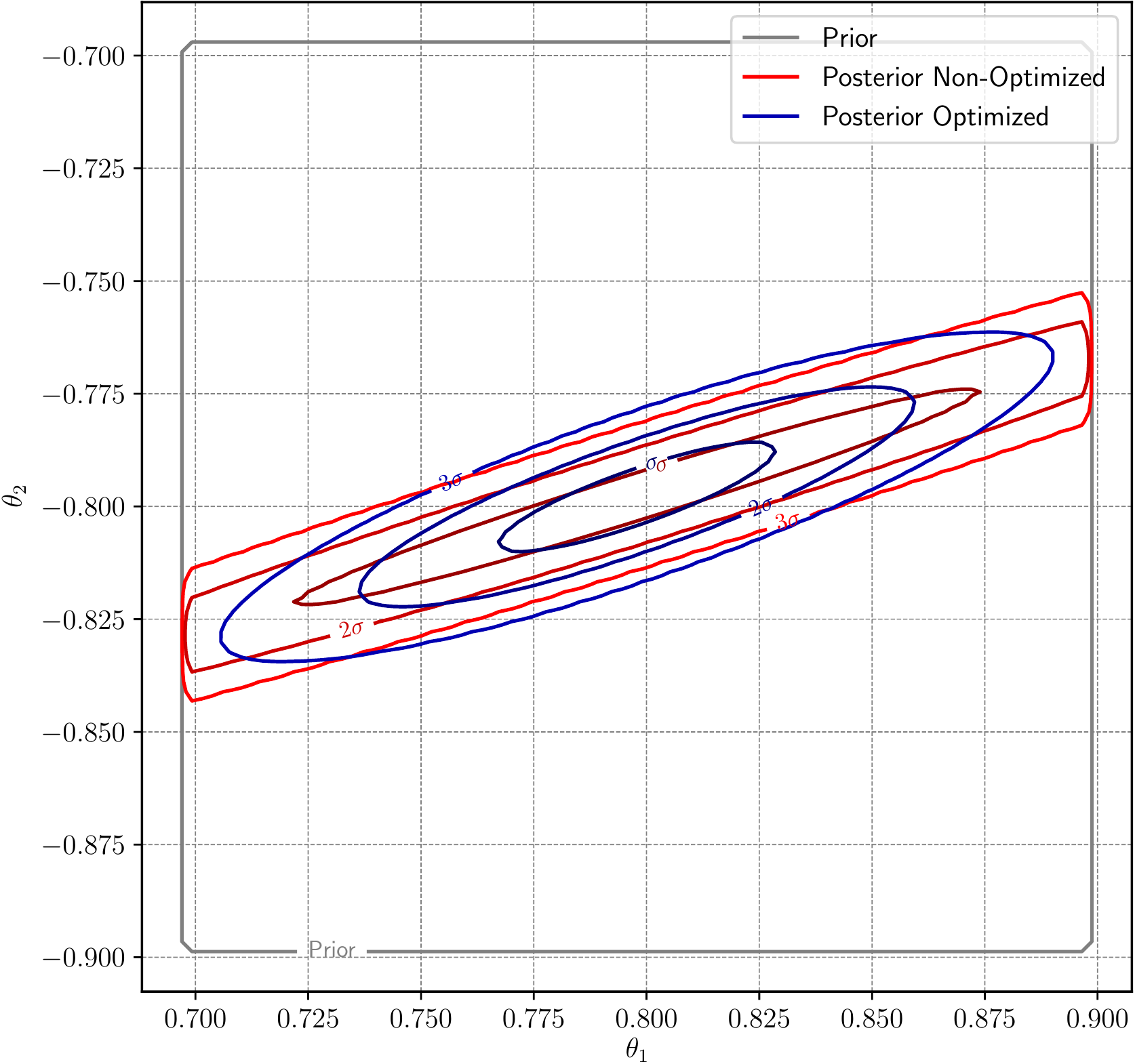}
	\caption{(Example 4, test case 2) Current streamlines for guess 2 and pdfs for both guess 1 (bottom-left) and guess 2 (bottom-right).}
	\label{fig:EIT3e1}
\end{figure}

The contour plot of the expected information gain and the ascent paths of two different initial guesses are presented in Figure \ref{fig:EIT3e}, where the infeasible regions are illustrated in blue.
The optimization is presented for the two initial guesses over the contour lines of the expected information gain.
The region shaded in gray indicates where the experiment does not provide any information gain, i.e., where $\mathcal{I}=0$.
\begin{figure}[H]
\centering
\includegraphics[width=.65\linewidth, clip, trim={0 0 0 0.5cm}]{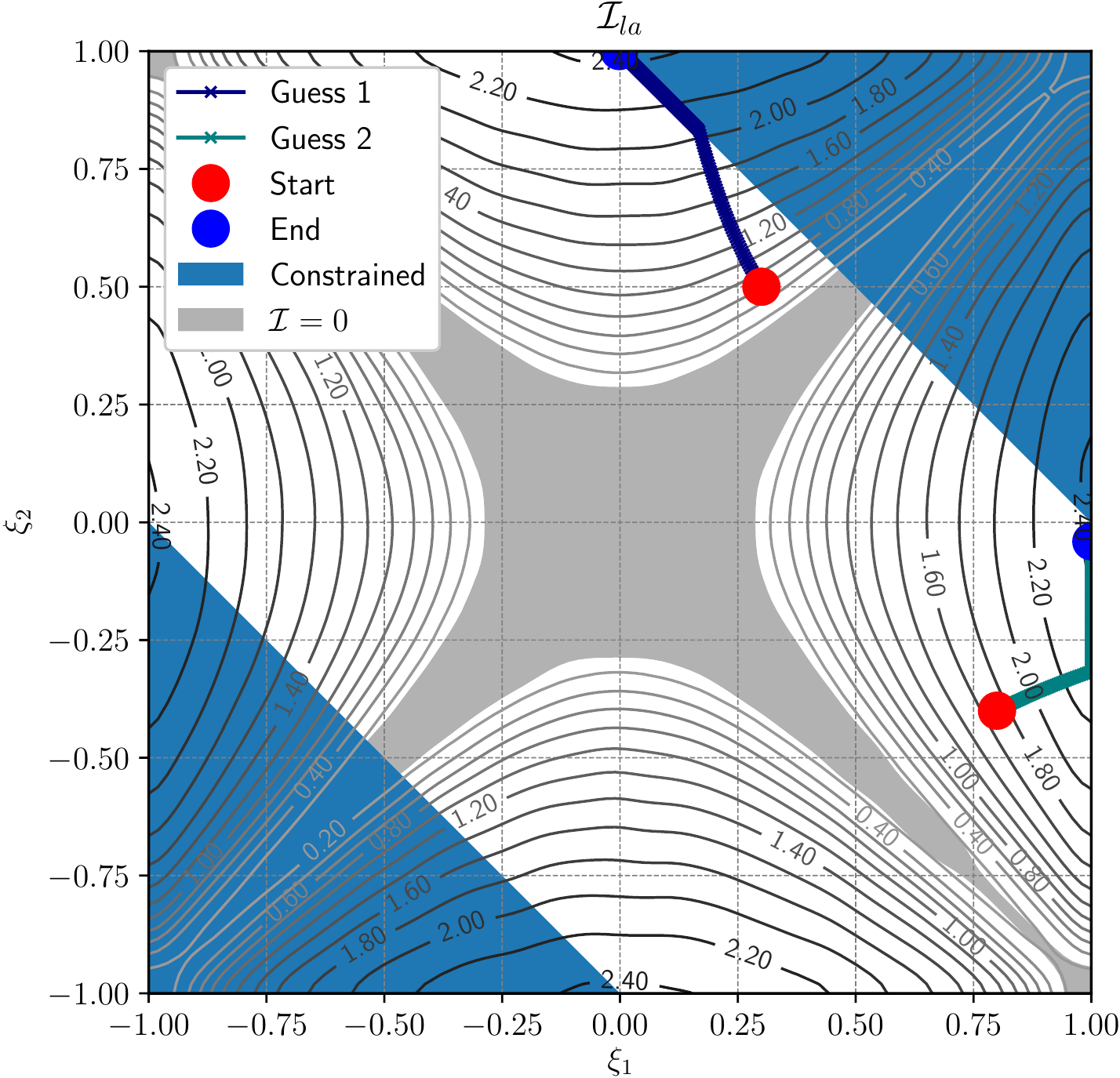}
\caption{(Example 4, test case 2) Contour of $\I{MCLA}$ with optimization paths for EIT.}
\label{fig:EIT3e}
\end{figure}

As shown in Figure \ref{fig:EIT3e}, this problem has four optima: $(0,1)$, $(1,0)$, $(0,-1)$, and $(-1,0)$.
These optima have in common the fact that one of the two top electrodes has null-current while the other two electrodes have current 1 or -1.
Figure \ref{fig:EIT3e} shows that the optimization converges to local optima for the two initial guesses, arriving at solutions where the expected information gain is around $2.4$.

This problem is symmetric in the vertical axis, as can be seen in Figure \ref{fig:EIT3e1}.
Because of this symmetry, the two optima found, $(1,0)$, and $(0,1)$ are reflections of one another over the symmetry axis, the reason why the two optimized posteriors look alike.
Moreover, this symmetry results in the diagonal symmetry of the expected information gain that can be observed in Figure \ref{fig:EIT3e}.

\paragraph{Test case 3 (Configuration with ten electrodes and ten variables)} We now consider a more complex EIT experiment with ten $2$ cm long electrodes.
The intensity of the initial current applied is $0.5$ at the inlet electrodes (on top of the two-ply composite body) and $-0.5$ for the outlet electrodes (on the bottom).

The current streamlines, before and after the optimization, are depicted at the top of Figure \ref{fig:EIT10e}.
The optimization converges to a setup with both positive and negative currents applied on both the top and the bottom electrodes.
This optimal setup provides an expected information gain of 7.18.
For the sake of comparison, the expected information gain from the setup with currents of 1.0 and -1.0 applied to the top and bottom electrodes, respectively, is only 2.95.
On the bottom left of Figure \ref{fig:EIT10e}, the posteriors show that the variance of the quantities of interest for the optimized configuration is remarkably smaller than for the initial guess.
On the bottom right of the figure, we present the self-convergence test where we see that using Nesterov's acceleration resulted in an accelerated convergence of the optimizer.
\begin{figure}[H]
\centering
	\includegraphics[width=\linewidth]{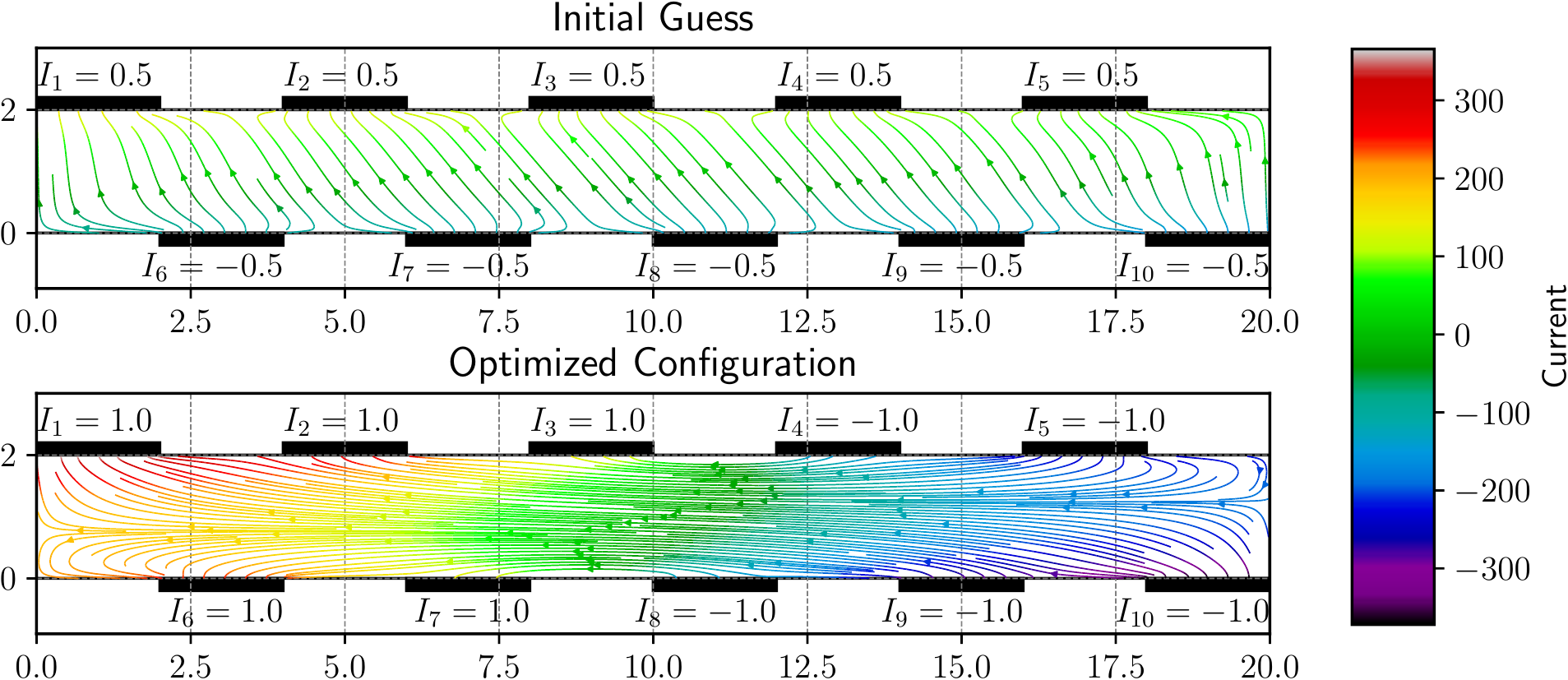} \\ \vspace{1em}
	\includegraphics[width=.48\linewidth]{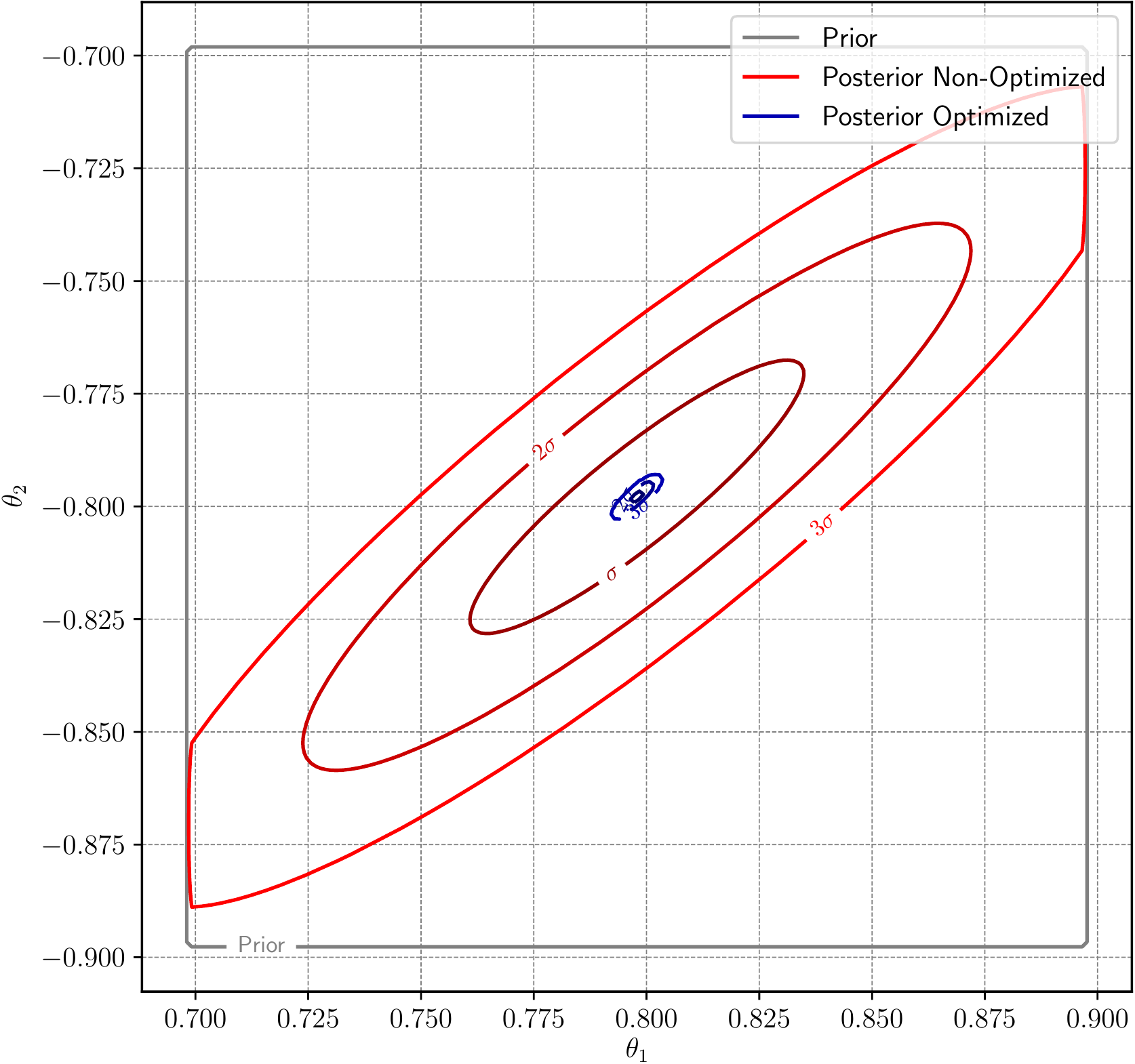} \hspace{0.5em}
	\includegraphics[width=.48\linewidth]{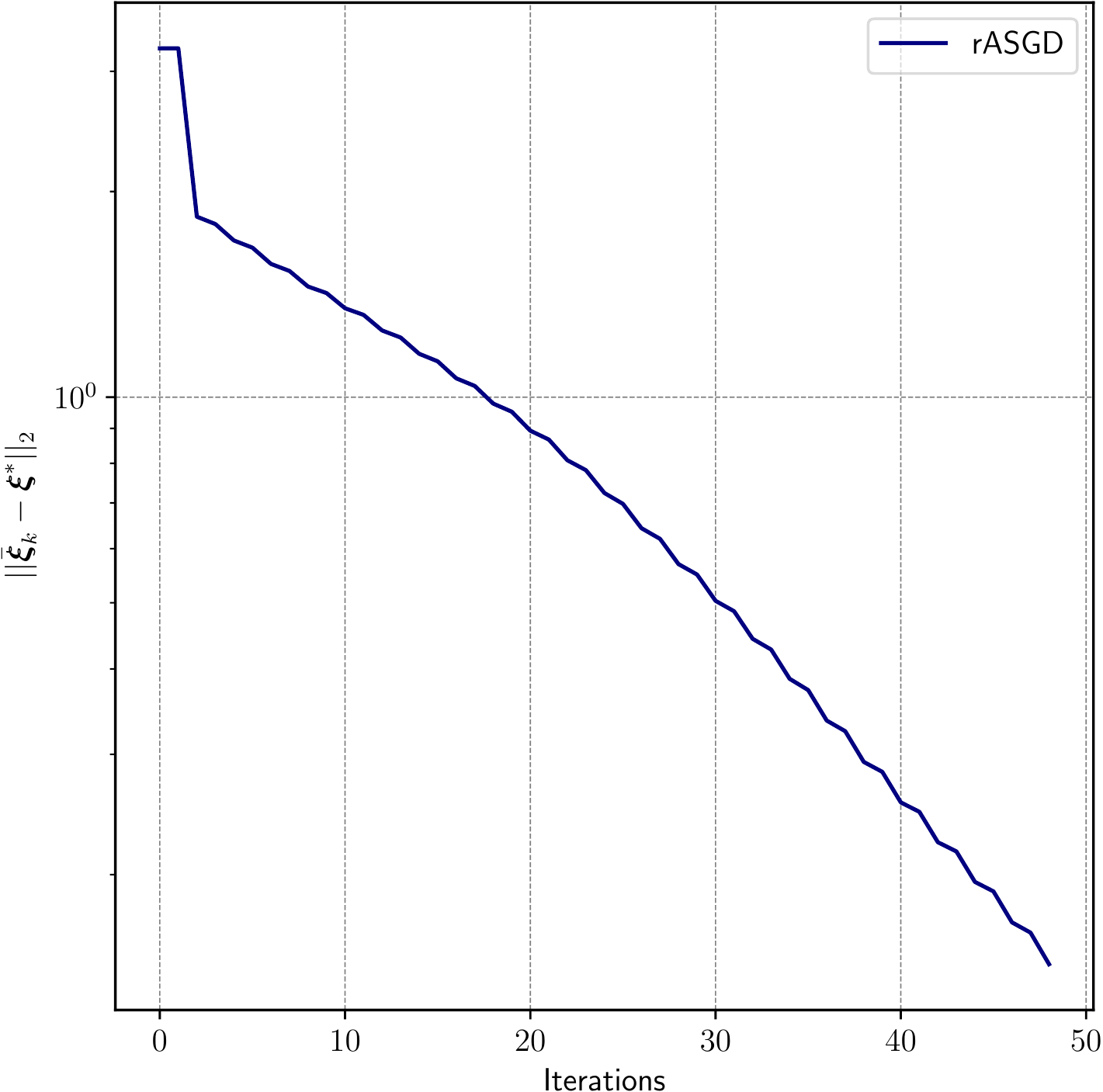}
	\caption{(Example 4, test case 3) Current streamlines, pdfs of initial and optimized configurations, and self-convergence to the optimum.}
	\label{fig:EIT10e}
\end{figure}

The expected information gains for all of the four cases presented in Example 4 are listed in Table \ref{tab:ExpInfGainEx4}.
\begin{table}[H]
\centering
\caption{Expected information gain using MCLA with $N=1000$ in Example 4.}
\begin{tabular}{ccc}
	\toprule
	                & Initial Guess & Optimized \\ \midrule
	    Case 1      &     2.26      &   6.72    \\
	Case 2, Guess 1 &     0.64      &   2.46    \\
	Case 2, Guess 2 &     1.74      &   2.47    \\
	    Case 3      &     1.57      &   7.18    \\ \bottomrule
\end{tabular}
\label{tab:ExpInfGainEx4}
\end{table}

\section*{Conclusion}

In this work, we couple the Nesterov-based accelerated stochastic gradient with momentum-restart and Laplace-based methods in order to solve Bayesian optimal experimental design problems.
For the gradient estimator, we use two strategies, a Laplace approximation and a Monte Carlo method with Laplace-based importance sampling, to approximate the solution of the inner integral that appears in the expectation of the Shannon information gain.
Moreover, we derive the explicit formula for the gradient of the EIG when using the Laplace approximation and for the double-loop Monte Carlo estimator with Laplace-based importance sampling.
The stochastic gradient with the Laplace approximation estimator (\SG{LA}) leads to convergence to the optimum in the examples where it is tested; it is also significantly cheaper than the other gradient estimators.
We observe that the bias introduced by the Laplace approximation is not relevant for the problems solved here.
The stochastic gradient with Monte Carlo importance sampling (\SG{MCIS}) estimator is more expensive than \SG{LA}, but less costly than stochastic gradient with Monte Carlo (\SG{MC}).
However, \SG{MCIS} has the advantage of being a consistent estimator, whereas \SG{LA} is not.
Moreover, the \SG{LA} and \SG{MCIS} estimators do not experience \emph{numerical underflow}, unlike the double-loop Monte Carlo estimator (DLMC) or its stochastic gradient, \SG{MC}.

Nesterov's acceleration and the restart technique improve the convergence, in comparison with simple steepest descent using the stochastic gradient estimators.
Our accelerated stochastic gradient descent (ASGD) with the restart technique (rASGD) efficiently solves stochastic optimization problems, even without the use of variance reduction techniques.

We analyze two benchmark problems based on benchmark analytical functions, one of them based on OED; and two common problems found in engineering.
The two benchmark problems are used to assess the efficiency of the optimization methods, as well as the \SG{LA} and \SG{MCIS} estimators.
The rASGD algorithm combined with \SG{LA}, \rASGD{LA}, performs better than the other methods in the second example; thus we opt to use it on the two engineering problems.
The first engineering problem is to determine the optimal positioning of strain gauges on a beam in order to accurately measure the beam's mechanical properties.
The second engineering problem is finding the optimal currents to be applied to electrodes during an electrical impedance tomography experiment, in order to measure the orientation of the plies in a composite laminate material, using the complete electrode model.
In the engineering examples, \rASGD{LA} performs well in terms of its ability to solve OED problems.
Since we use the \SG{LA} estimator for the two engineering examples, we use the DLMCIS estimator to determine whether the biases of the gradients in the optima found are relevant.
Our numerical tests show that the biased optima are sufficiently close of the real optima for the desired precision.
In situations where the biased optimum is not sufficiently close to the real optimum, we suggest a two-phase optimization, first with \SG{LA}, and second with \SG{MCIS}, to correct the bias.

In future work, we plan on using mini-batches and other variance reduction techniques to address problems where the variance of the stochastic gradient estimators is large or the admissible error is considerably small.

\section*{Acknowledgments}

The research reported in this publication was supported by funding from King Abdullah University of Science and Technology (KAUST), KAUST CRG3 Award Ref:2281, and KAUST CRG4 Award Ref:2584.
The authors also gratefully acknowledge the financial support of CNPq (National Counsel of Technological and Scientific Development) and CAPES (Coordination of Superior Level Staff Improvement).

% \section*{References}
\bibliography{OptExpDesign}

\end{document}